\newtheorem{lemma}{Lemma}[section]
\newtheorem{prop}[lemma]{Proposition}
\newtheorem{cor}[lemma]{Corollary}
\newtheorem{claim*}{Claim}
\newtheorem{thm}[lemma]{Theorem}
\newtheorem{defn}[lemma]{Definition}
\newtheorem{example}[lemma]{Example}
\theoremstyle{remark}
\newtheorem{remark}[lemma]{Remark}
\newtheorem{remarks}[lemma]{Remarks}
\newtheorem{rmk}[lemma]{Remark}
\newcommand{\A}{{\mathbb A}}
\newcommand{\PP}{{\mathbb P}}
\newcommand{\F}{{\mathbb F}}
\newcommand{\Q}{{\mathbb Q}}
\newcommand{\Z}{{\mathbb Z}}
\newcommand{\NN}{{\mathbb N}}
\newcommand{\kbar}{{\overline{k}}}
\newcommand{\kk}{{\mathbf k}}
\newcommand{\calC}{{\mathcal C}}
\newcommand{\calD}{{\mathcal D}}
\newcommand{\calE}{{\mathcal E}}
\newcommand{\calF}{{\mathcal F}}
\newcommand{\calG}{{\mathcal G}}
\newcommand{\calI}{{\mathcal I}}
\newcommand{\calM}{{\mathcal M}}
\newcommand{\calN}{{\mathcal N}}
\newcommand{\calO}{{\mathcal O}}
\DeclareMathOperator{\Char}{char}
\DeclareMathOperator{\Hom}{Hom}
\DeclareMathOperator{\Gal}{Gal}
\DeclareMathOperator{\divv}{div}
\DeclareMathOperator{\ord}{ord}
\DeclareMathOperator{\Spec}{Spec}
\DeclareMathOperator{\red}{red}
\DeclareMathOperator{\GCD}{GCD}
\newcommand{\eps}{\varepsilon}
\numberwithin{equation}{section}
\numberwithin{table}{section}
\newcommand{\defi}[1]{\textsf{#1}} 
\newcommand{\rf}{k}
\title{Degrees of points on varieties over Henselian fields}
\author{Brendan Creutz}
\author{Bianca Viray}
\address{School of Mathematics and Statistics, University of Canterbury, Private Bag 4800, Christchurch 8140, New Zealand}
\email{brendan.creutz@canterbury.ac.nz}
\urladdr{http://www.math.canterbury.ac.nz/\~{}b.creutz}
\address{University of Washington, Department of Mathematics, Box 354350, Seattle, WA 98195,~USA}
\email{bviray@uw.edu}
\urladdr{http://math.washington.edu/\~{}bviray}
\begin{document}
\date{}
\begin{abstract}
Let $W/K$ be a nonempty scheme over the field of fractions of a Henselian local ring $R$. A result of Gabber, Liu and Lorenzini shows that the $\GCD$ of the set of degrees of closed points on $W$ (which is called the \defi{index} of $W/K$) can be computed from data pertaining only to the special fiber of a proper regular model of $W$ over $R$. We show that
the entire set of degrees of closed points on $W$ can be computed from data pertaining only to the special fiber, provided the special fiber is a strict normal crossings divisor. 

As a consequence we obtain an algorithm to compute the degree set of any smooth curve over a Henselian field with finite or algebraically closed residue field. Using this we show that degree sets of curves over such fields can be dramatically different than degree sets of curves over finitely generated fields.  For example, while the degree set of a curve over a finitely generated field contains all sufficiently large multiples of the index, there are curves over \(p\)-adic fields with index \(1\) whose degree set excludes all integers that are coprime to \(6\).
\end{abstract}
\maketitle

\section{Introduction}
	Let $W/F$ be a nonempty scheme over a field $F$. The index $\delta(W/F)$, i.e., the GCD of the degrees of the closed points of \(W\), is an important arithmetic invariant of \(W\). Clark conjectured \cite{Clark}*{Conjecture 16} that, when $F$ is the field of fractions of a Henselian local ring with perfect residue field, the index can be computed from data pertaining only to the special fiber of a proper regular model of $W$ over the ring of integers of $F$ whose special fiber is a strict normal crossings divisor (assuming such a model exists). This conjecture was proved by Gabber, Liu, and Lorenzini~\cite{GLL}, and in fact these three authors prove a stronger statement that omits the strict normal crossings hypothesis.
	
	In this paper, we prove a strengthening of Clark's conjecture in a different direction. Namely, we show how the entire \defi{degree set of \(W\),}
	\[
		\calD(W/F) := \left\{\deg_F(P) : P\in W, \textup{ closed}\right\}, \quad\textup{where }\deg_F(P):=[\kk(P):k]
	\] can be determined from data pertaining only to the special fiber of a strict normal crossings model. Since \(\delta(W/F) = \GCD(\calD(W/F))\), we obtain a simpler proof of Clark's original conjecture under the SNC hypothesis as a byproduct.

	To state our result, let us fix some notation used throughout the paper. Let $R$ be a discrete valuation ring with field of fractions $K$ and perfect residue field $k$. Let $S = \Spec(R)$ and let $s \in S$ and $\eta \in S$ denote, respectively, the closed and generic points of $S$. Let $X$ be a regular scheme and let $X \to S$ be a proper, flat morphism whose generic fiber $X_\eta$ is geometrically irreducible over $K$. The special fiber $X_s$ may be viewed as a Weil divisor $X_s = \sum_{i} m_iE_i$ on $X$, where $E_i$ are the irreducible components of $X_s$ (endowed with the reduced induced scheme structure as closed subsets of $X_s$) and $m_i \in \NN$ are their multiplicities. For a closed point $x \in X_s$ define $\calN(x) \subset \NN = \Z_{> 0}$ to be the semigroup under addition generated by the positive integers $m_i$, as $i$ ranges over the set $\{ i \;:\; x \in E_i \}$.
	
	Our main result is the following.
	
	\begin{thm}\label{thm:MainThm} Let $X \to S$ be as above. Assume that $R$ is Henselian and that $X_s$ is a strict normal crossings divisor on $X$. Then 
	\begin{equation}\label{eq:MainThm}
		\calD(X_\eta/K) = \bigcup_{x \in X_s} \deg_k(x)\calN(x)\,,
	\end{equation}
	where the union ranges over the closed points on the special fiber $X_s \subset X$.
	\end{thm}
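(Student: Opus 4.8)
\emph{The inclusion ``$\subseteq$''.} Let $P$ be a closed point of $X_\eta$ and let $Z=\overline{\{P\}}\subseteq X$ be its closure with the reduced structure. Since $X\to S$ is proper, $Z$ is proper over $S$; since $Z$ is integral and dominates $S$, it is flat over $R$ with zero--dimensional generic fibre $\{P\}$, so $Z\to S$ has finite fibres and is therefore \emph{finite}. Thus $Z=\Spec A$ with $A$ a domain, finite and torsion--free over $R$, hence free of rank $\deg_K(P)=\dim_K\kk(P)$; and because $R$ is Henselian, a finite $R$--algebra which is a domain is \emph{local}. Consequently $Z_s=\Spec(A/\pi A)$ is supported at a single closed point $x\in X_s$, with $\deg_K(P)=\dim_k(A/\pi A)=\deg_k(x)\cdot\operatorname{len}_A(A/\pi A)$. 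Here the SNC hypothesis enters: at $x$ there is a regular system of parameters $z_1,\dots,z_n$ of $\mathcal O_{X,x}$ with $E_{i_j}=V(z_j)$ for the components $E_{i_1},\dots,E_{i_t}$ through $x$, and $\pi=u\,z_1^{m_{i_1}}\cdots z_t^{m_{i_t}}$ with $u$ a unit. As $Z$ dominates $S$ it lies in none of these components, so the images $\bar z_j$ are nonzero nonunits of the one--dimensional local domain $A$; additivity of length gives $\operatorname{len}_A(A/\pi A)=\sum_j m_{i_j}\operatorname{len}_A(A/\bar z_j A)$ with every summand $\ge 1$, an element of $\calN(x)$. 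Hence $\deg_K(P)\in\deg_k(x)\calN(x)$.

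\emph{The inclusion ``$\supseteq$''.} Fix a closed point $x\in X_s$ on $E_{i_1},\dots,E_{i_t}$ with multiplicities $m_1,\dots,m_t$, and $n=\sum_j c_j m_j\in\calN(x)$ with $c_j\ge 0$; we must realise $\deg_k(x)\,n$ as $\deg_K(P)$ for some closed $P\in X_\eta$. Reversing the computation above, it suffices to produce an integral curve $Z\subseteq X$, finite over $S$, whose point of reduction $y$ lies on exactly the components $E_i$ with $c_i\ge 1$ (so that $\calN(y)\ni n$), with $\operatorname{len}(\mathcal O_{Z,y}/\bar z_i)$ equal to the prescribed $c_i$ and with $Z_\eta$ a point of the prescribed degree. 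When every $c_j\ge 1$ one can take $Z$ through $x$ itself: using the local structure $\pi=u\,z_1^{m_1}\cdots z_t^{m_t}$ and the Henselian (or complete) local ring at $x$, one writes down a one--dimensional quotient domain in which each $z_j$ has order $c_j$ and the residue field is $\kk(x)$ (for instance as the image of a parametrisation $z_j\mapsto(\text{unit})\cdot\varpi^{c_j}$), and then \emph{algebraises} this germ to an honest curve in $X$ using properness of $X/S$ together with an approximation argument. When some $c_j=0$ the curve cannot pass through $x$; instead one moves to a closed point $y$ on the stratum $\bigcap_{c_j\ge 1}E_{i_j}$, which by the SNC hypothesis is regular and, in this case, of positive dimension, lying off all other components, chooses $y$ of a suitable residue degree, runs the same branch construction at $y$, and rescales the $c_j$'s accordingly. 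Since $\delta(X_\eta/K)=\GCD(\calD(X_\eta/K))$, the equality then also reproves Clark's conjecture under the SNC hypothesis.

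\emph{Main obstacle.} The difficulty is concentrated in ``$\supseteq$'', and in two places. First, the \emph{algebraisation}: passing from a prescribed formal (or Henselian--local) curve germ with given contact orders along the $E_i$ to a genuine closed integral curve in $X$ with a single point of reduction and the correct residue field, which requires properness of $X/S$ and a careful approximation argument, complicated in the mixed--characteristic case by the ramification of $R$ inside $X$. Second, the \emph{bookkeeping when $n\notin\calN^{++}(x)$} (i.e.\ some $c_j=0$): one must locate an auxiliary closed point $y$ on the appropriate lower stratum whose residue degree $\deg_k(y)$ makes $\deg_k(y)\cdot(\text{prescribed length})$ equal to $\deg_k(x)\,n$ — equivalently, one needs a geometric lemma identifying $\bigcup_x\deg_k(x)\calN(x)$ with $\bigcup_x\deg_k(x)\calN^{++}(x)$ — and this is where the existence of closed points of suitable degrees on positive--dimensional strata (over a finite or algebraically closed, more generally perfect, residue field) is used. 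The inclusion ``$\subseteq$'' is comparatively routine once one observes that $\overline{\{P\}}\to S$ is finite and that $\overline{\{P\}}$ is local.
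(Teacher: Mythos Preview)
Your ``$\subseteq$'' is correct and is essentially the paper's argument, recast via lengths in the one--dimensional local domain $A=\mathcal O_Z(Z)$ rather than via valuations after Henselization; the additivity $\operatorname{len}_A(A/ab)=\operatorname{len}_A(A/a)+\operatorname{len}_A(A/b)$ for nonzerodivisors plays the role of the paper's identity $\sum_i \nu(s_i)n_i=e(R'/R)$.

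For ``$\supseteq$'' the core idea (the parametrisation $z_j\mapsto\varpi^{c_j}$) matches the paper, but the two steps you flag as obstacles are precisely where the paper supplies specific tools your sketch lacks. The paper first base--changes along the unramified extension of degree $\deg_k(x)$ to reduce to a rational point $x'$, then proves an explicit presentation $\mathcal O_{X',x'}^h\simeq R'^h\{t_1,\dots,t_d\}/(t_1^{n_1}\cdots t_d^{n_d}-u\pi)$ and substitutes $t_i=z^{a_i}$ to land in $R'\{z\}/(z^n-v\pi)$; a \emph{Henselian Weierstrass preparation theorem} then replaces $z^n-v\pi$ by an Eisenstein polynomial over $R'$, producing an honest totally ramified extension $R''/R'$ with an $R''$--point of $X$. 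This is your ``algebraisation'' made precise, and no properness or approximation is invoked. Second, this yields only an $L$--point of $X_\eta$, not a closed point of degree $[L:K]$; to close that gap the paper uses Pop's theorem that Henselian fields are large together with a result of Liu--Lorenzini guaranteeing a closed point of residue field $L$ whenever $X_\eta(L)\ne\emptyset$. Your plan to algebraise directly to a closed integral $Z\subset X$ with prescribed contact orders would bypass the largeness step, but you give no mechanism for producing such $Z$.

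Your detour through ``some $c_j=0$'' is unnecessary and the proposed workaround is faulty. In the paper's setup $\mathcal N(x)$ is characterised (via the lemma on SNC--points) as the set of $\sum_i a_i m_i$ with every $a_i\ge 1$, so the construction is always carried out at $x$ itself and the issue never arises. Your alternative of moving to a point $y$ on the lower stratum $\bigcap_{c_j\ge 1}E_{i_j}$ ``of suitable residue degree'' does not work in general: nothing guarantees that this stratum carries a closed point with $\deg_k(y)$ dividing $\deg_k(x)$, so the bookkeeping you describe cannot be completed.
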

	
	The proof shows that the degree set of the generic fiber is contained in the union appearing in~\eqref{eq:MainThm} whenever $R$ is Henselian, but that it can be a proper subset if $X_s$ does not have strict normal crossings. This containment can fail if $R$ is not Henselian (even if $X_s$ is assumed to have strict normal crossings). See Theorem~\ref{thm:general} and the examples in Section~\ref{sec:hypotheses}.
	
	From the theorem we deduce the following.
	
	\begin{cor}\label{cor:index}
		Suppose $X \to S$ satisfies the hypotheses of Theorem~\ref{thm:MainThm}. Then the index of the generic fiber of $X$ over $K$ is given by
		\[
			\delta(X_\eta/K) = \GCD( m_i\delta(E_i/k))\,.
		\]
	\end{cor}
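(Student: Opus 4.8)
The plan is to deduce the corollary formally from Theorem~\ref{thm:MainThm} together with two elementary facts about greatest common divisors. Since $\delta(X_\eta/K)=\GCD(\calD(X_\eta/K))$ by definition, Theorem~\ref{thm:MainThm} gives
\[
	\delta(X_\eta/K)=\GCD\Bigl(\bigcup_{x\in X_s}\deg_k(x)\,\calN(x)\Bigr)=\GCD_{x\in X_s}\;\GCD\bigl(\deg_k(x)\,\calN(x)\bigr),
\]
the union and outer GCD ranging over closed points of $X_s$. So the first step is to record that $\GCD\bigl(\deg_k(x)\calN(x)\bigr)=\deg_k(x)\cdot\GCD(\calN(x))$ and that the GCD of a numerical semigroup equals the GCD of any generating set; since $\calN(x)$ is generated by $\{m_i:x\in E_i\}$ this yields
\[
	\delta(X_\eta/K)=\GCD_{x\in X_s}\Bigl(\deg_k(x)\!\!\gcd_{i\,:\,x\in E_i}m_i\Bigr).
\]

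Next I would unwind the right-hand side of the asserted identity. Because $X\to S$ is proper, each $E_i$ is a nonempty proper $k$-scheme, so $\delta(E_i/k)$ is a well-defined positive integer and $m_i\,\delta(E_i/k)=\GCD\{\,m_i\deg_k(x):x\in E_i\text{ closed}\,\}$. Moreover the closed points of $X_s$ are exactly the closed points of the various components $E_i$ (as $X_s=\bigcup_iE_i$ with each $E_i$ closed in $X_s$), and for such a point the degree $[\kk(x):k]$ does not depend on whether $x$ is viewed in $X_s$ or in $E_i$. Hence
\[
	\GCD_i\bigl(m_i\,\delta(E_i/k)\bigr)=\GCD\bigl\{\,m_i\deg_k(x):i,\ x\in E_i\text{ closed}\,\bigr\}.
\]

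It then remains to check that
\[
	d_1:=\GCD_{x\in X_s}\Bigl(\deg_k(x)\!\!\gcd_{i\,:\,x\in E_i}m_i\Bigr),\qquad d_2:=\GCD\bigl\{\,m_i\deg_k(x):i,\ x\in E_i\,\bigr\}
\]
agree, which is a short divisibility argument. For $d_1\mid d_2$: for each closed $x\in X_s$ the $x$-term of $d_1$ divides $\deg_k(x)\,m_i$ whenever $x\in E_i$, because $\gcd_{j:x\in E_j}m_j\mid m_i$; so $d_1$ divides every element of the set defining $d_2$, hence $d_1\mid d_2$. For $d_2\mid d_1$: fixing a closed point $x$, we have $d_2\mid\deg_k(x)\,m_i$ for every $i$ with $x\in E_i$, so $d_2$ divides $\gcd_{i:x\in E_i}\bigl(\deg_k(x)m_i\bigr)=\deg_k(x)\gcd_{i:x\in E_i}m_i$; since this holds for all $x$, $d_2\mid d_1$.

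I do not expect a genuine obstacle: all the substance is in Theorem~\ref{thm:MainThm}, and the rest is bookkeeping. The only points worth stating with care are the (standard) identity $\GCD$ of a numerical semigroup $=\GCD$ of its generators, and the identification of the closed points of $X_s$ with the closed points of the $E_i$, where properness of $X\to S$ is used precisely to guarantee that the relevant sets of degrees are nonempty subsets of $\NN$.
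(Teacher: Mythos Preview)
Your proposal is correct and follows exactly the approach the paper intends: the paper states the corollary with no separate proof, treating it as an immediate consequence of Theorem~\ref{thm:MainThm} via the identity $\delta(X_\eta/K)=\GCD(\calD(X_\eta/K))$, and your argument supplies the routine GCD bookkeeping that makes this deduction precise.
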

	
The proof of Corollary~\ref{cor:index} in \cite{GLL}*{Theorem 8.2(b)} does not require the strict normal crossings hypothesis. Instead, the authors prove a moving lemma~\cite{GLL}*{Corollary 6.7} (which is of independent interest) and thereby obtain $\delta(E_i/k) = \delta(E^0_i/k)$, where $E^0_i = E_i^\textup{reg} - \cup_{i\ne j}E_i\cap E_j$. To compute the $\GCD$ it is then enough to consider the degrees of closed points on $X_\eta$ which specialize to points on the special fiber which are regular points on the reduced special fiber. This method also allows Gabber, Liu, and Lorenzini to handle cases when the irreducible components $E_i$ are not assumed to be regular (regularity of the components is implied by the strict normal crossings hypothesis). When the residue field is finite, the equality $\delta(E_i/k) = \delta(E^0_i/k)$ follows from the Weil conjectures. A proof in this case (which also works for separably closed $k$) was given earlier in \cite{BoschLiu}*{1.6} and \cite{CT-S}*{3.1}.
		
Since our interest is in computing the degree \emph{set}, rather than the subgroup of $\Z$ that it generates, we cannot consider closed points modulo rational equivalence.  Thus, we take a different approach. We characterize the Henselization of the local ring of $X$ at the closed points where $X_s$ has strict normal crossings (see Lemma~\ref{lem:SNCpoint}). This allows us to determine the possible degrees of closed points on the generic fiber which specialize to a closed point of $X_s$ where $X_s$ has strict normal crossings, whether regular on $(X_s)_{\red}$ or not.

	\subsection{Application to curves over Henselian fields}
	For any smooth projective and geometrically irreducible curve $C/K$ there exists a proper, flat morphism $X \to S$ with $X$ regular such that $X_s$ has strict normal crossings and $X_\eta \simeq C$ \cite[Proposition 10.1.8]{Liu}. Moreover, such $X \to S$ can, at least in principle, be computed given explicit (finitely presented) equations defining $C$. 
	Consequently, Theorem~\ref{thm:MainThm} yields an algorithm that reduces computation of $\calD(C/K)$ to the computation of $\calD(E^0_i/k)$ for a finite set of curves $E^0_i/k$ defined over the residue field. 
	
	Using this we show that over any Henselian field (i.e., the field of fractions of a Henselian discrete valuation ring) there is a curve of index $1$ whose degree set excludes infinitely many integers. This is of interest because over finite and Hilbertian fields (e.g., any finitely generated fields) the degree set must contain all sufficiently large multiples of the index (See Lemma~\ref{lem:finitefields}, \cite[Proposition 7.5]{GLL} and \cite[Theorem 2.1]{CMP}). For example, consider the genus $2$ hyperelliptic curve $C/K$ given by the affine equation
\begin{equation}\label{ex:2Ncup3N}
	C : y^2 = \pi(x^6 + \alpha\pi x^3 + \pi^2)\,,
\end{equation}
where $\pi \in K$ is a uniformizer and $\alpha \in R^\times$ is a unit such that $(\alpha^2 - 4) \in R^{\times 2}$. The special fiber of the minimal regular model $X$ of $C$ over $R$ is Type [IV] in the Namikawa-Ueno classification \cite[p. 155]{NU}. The irreducible components, their multiplicities and pairwise intersections (all of which are transversal) are as indicated in Diagram~\eqref{fig:1} below.

\begin{equation}\label{fig:1}
\begin{tikzcd}
                          & {} \arrow[dd, no head] & {} \arrow[dd, no head] & {} \arrow[dd, no head] &                       & {} \arrow[ddd, no head] &    \\
6 \arrow[rrrrrr, no head] &                        &                        &                        &                       &                         & {} \\
                          &   2                    & 3                      & 3                      & 2 \arrow[rr, no head] &                         & {} \\
                          &                        &                        &                        &                       & 4                       &   
\end{tikzcd}
\end{equation}
For any closed point $x \in X_s$ we have $\deg_k(x)\calN(x) \subset \calN(x) \subset 2\NN \cup 3\NN$, because the components containing $x$ either all have multiplicity divisible by $2$ or all have multiplicity divisible by $3$. 
All components are genus \(0\) curves, so there are \(k\)-points on the multiplicity \(2\) and \(3\) components that do not lie on any other component. For these we have $\deg_k(x)\calN(x) = 2\NN$ and $\deg_k(x)\calN(x) = 3\NN$. So if $K$ is Henselian, Theorem~\ref{thm:MainThm} gives $\calD(C/K) = 2\NN \cup 3\NN$. Then $C/K$ has index $1 = \GCD(\calD(C/K))$ by Corollary~\ref{cor:index}, and the degree set excludes all integers coprime to $6$. For more examples of similar phenomena see Proposition~\ref{prop:examples}.

		The possibilities for the (geometric) special fiber of a minimal proper regular model of a smooth geometrically irreducible curve of genus $2$ have been classified in \cites{Ogg,NU} (excluding some small residue characteristics). Using this classification we obtain the following.
		
\begin{thm}\label{thm:g2}
	Let \(K\) be a Henselian field with perfect residue field \(k\) and let \(C\) be a smooth projective geometrically irreducible genus \(2\) curve over \(K\).  If \(\Char(k) \nmid 30 \), then 
\[
	\calD(C/K) \in \left\{2\NN, \NN, \NN_{>1}, 2\NN\cup 3\NN\right\} \cup 
		\left\{
			 \cup_{d \in\calD(C_0/k)} d\NN : C_0/k \textup{ regular genus }2
		\right\}\,.
\]
 
\end{thm}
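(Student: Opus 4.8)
The plan is to combine Theorem~\ref{thm:MainThm} with the Namikawa--Ueno--Ogg classification of genus $2$ fibres. By \cite[Proposition 10.1.8]{Liu} one may fix a proper regular SNC model $X\to S$ of $C$ (blowing up further if necessary to reach strict normal crossings, which by Theorem~\ref{thm:MainThm} changes neither $C$ nor $\bigcup_{x}\deg_k(x)\calN(x)$); as $\dim X=2$, at most two components of $X_s$ pass through any closed point. Since $\Char(k)\nmid 30$, the geometric configuration of $X_s$---the components $E_i\otimes_k\kbar$, the multiplicities $m_i$, the arithmetic genera $p_a(E_i)$, and the dual graph---is one of the finitely many appearing in \cites{NU,Ogg}, and $X_s/k$ is recovered from it together with the $\Gal(\ksep/k)$-action and the residue fields of the points.

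I would then make two reductions. If $C(K)\ne\varnothing$, then $\calD(C/K)=\NN$: the closure of a rational point is a section, hence (as $X$ is regular) regular and meeting $X_s$ in a single $k$-rational point $\bar P$ on exactly one component $E_i$, so $1=\overline{\{P\}}\cdot X_s=m_i(\overline{\{P\}}\cdot E_i)$ forces $m_i=1$ and $\deg_k(\bar P)\calN(\bar P)=\NN$. Next, $\sum_i m_i\bigl(2p_a(E_i)-2-E_i^2\bigr)=\omega_{X/S}\cdot X_s=\deg_K\omega_C=2$; when $X_s$ is irreducible this gives (using $E_i^2=0$) good reduction $[I_0]$, where $k$ perfect forces $X_s=C_0$ smooth of genus $2$, all $m_i=1$, and hence $\calD(C/K)=\bigcup_{x\in C_0}\deg_k(x)\NN=\bigcup_{d\in\calD(C_0/k)}d\NN$, the final set in the list; when $X_s$ is reducible (so $E_i^2\le -1$) it shows every component has $p_a(E_i)\le 1$, at most one has $p_a(E_i)=1$, and such a component has $m_i\in\{1,2\}$. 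So apart from good reduction, $C$ degenerates to a graph of rational curves together with at most one genus-$1$ component of multiplicity $\le 2$.

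For each remaining (reducible) type I would evaluate
\[
\calD(C/K)=\bigcup_{i}\ \bigcup_{x\in E_i^{0}}\deg_k(x)\langle m_i\rangle\ \ \cup\ \ \bigcup_{i<j}\ \bigcup_{x\in E_i\cap E_j}\deg_k(x)\langle m_i,m_j\rangle ,
\]
where each $\calN(x)$ is $\langle m_i\rangle$ or $\langle m_i,m_j\rangle$. A regular rational $k$-component $E_i$ is a disjoint union of $r$ smooth conics permuted transitively by $\Gal(\ksep/k)$, so every closed point of $E_i$ has degree divisible by $r$; hence $\calD(E_i^0/k)$ equals $\NN$ when $E_i\cong\PP^1_k$ with $E_i^0(k)\ne\varnothing$, equals $2\NN$ for a single pointless conic (which forces $\#k=\infty$), and otherwise is contained in $r\NN$. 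Since $E_i^0$ omits at most as many points as the valence of $E_i$ in the dual graph---bounded over the list---and $\Char(k)\nmid 30$ excludes $\#k\in\{2,3,4,5\}$, a component $\cong\PP^1_k$ has $E_i^0(k)\ne\varnothing$ outside finitely many small-residue-field cases, which are settled by hand. Going through the list one finds that in every reducible type exactly one of the following occurs: some multiplicity-$1$ component has a $k$-point on $E_i^0$, so $\calD(C/K)=\NN$; every attainable degree is even, so $\calD(C/K)=2\NN$; degrees $2$ and $3$ but not $1$ arise (from multiplicity-$2$ and multiplicity-$3$ loci) and nothing else does, so $\calD(C/K)=2\NN\cup 3\NN$ as in Diagram~\eqref{fig:1}; or, additionally, degree $5$ arises, so $\calD(C/K)=\NN_{>1}$. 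A genus-$1$ component $E$ (of multiplicity $1$ or $2$) that is not pointless yields a $k$-point on $E^0$ and hence $\NN$; if $E$ is pointless, its index $n>1$ gives $\calD(E^0/k)\subseteq n\NN$, and one checks that the remaining rational components supply the small degrees needed to keep $\calD(C/K)$ in the list.

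The hard part is the length and delicacy of this last step: it needs a type-by-type pass through the Namikawa--Ueno list---recording $\{m_i\}$, the dual graph, and the possible Galois actions---and, for each type, ruling out a ``bad'' numerical semigroup $\langle m_i,m_j\rangle$ (such as $\langle 2,5\rangle$) arising at a low-degree point without the further multiplicities needed to complete it into $2\NN$, $\NN$, $\NN_{>1}$, or $2\NN\cup 3\NN$; treating pointless conics, very small residue fields, geometrically reducible components, and nontrivial Galois actions uniformly is the delicate point. The structural constraints from the second paragraph sharply limit which multiplicity patterns can occur, which is what makes the verification finite and manageable; I would record its outcome in a table indexed by the Namikawa--Ueno type.
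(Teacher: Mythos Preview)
Your strategy---compute $\calD(C/K)$ from Theorem~\ref{thm:MainThm} and run through the Namikawa--Ueno list type by type---is valid in principle, but it misses the lemma that makes the paper's argument short, and as written your sketch has a real gap.

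The paper's key observation (Lemma~\ref{lem:LargeField}) is that a Henselian field is large, so the hyperelliptic map immediately gives $2\in\calD(C/K)$ (hence $2\NN\subset\calD(C/K)$), and if $\delta(C/K)=1$ then Riemann--Roch produces an effective divisor of degree $3$ which, when $C(K)=\emptyset$, must be irreducible, giving $3\NN\subset\calD(C/K)$. Thus $2\NN\cup 3\NN\subset\calD(C/K)$ automatically whenever $\delta=1$, and the whole question collapses to: does any degree coprime to $6$ occur? If not, $\calD(C/K)=2\NN\cup 3\NN$; if so, one must only show $\NN_{>1}\subset\calD(C/K)$. The paper then needs just a short structural summary of the NU types (Proposition~\ref{prop:NUSummary}) organised around this single dichotomy, rather than a full tabulation.

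Without that lemma your assertion ``additionally, degree $5$ arises, so $\calD(C/K)=\NN_{>1}$'' does not follow: $2\NN\cup 3\NN\cup 5\NN$ omits $7,11,13,\ldots$, so you would have to exhibit, \emph{from the special fiber alone}, contributions covering every integer $>1$. That is precisely what the large-field/Riemann--Roch argument supplies for free. The same issue bites in your treatment of a pointless genus-$1$ component $E$: its index over $k$ can be any positive integer, so ``the remaining rational components supply the small degrees needed'' is not a uniform statement you can make without further work---whereas Lemma~\ref{lem:LargeField} makes the index of $E$ irrelevant. Two smaller points: your adjunction claim ``at most one component has $p_a=1$'' is false (type $[\mathrm{I}_0\text{--}\mathrm{I}_0\text{--}1]$ has two genus-$1$ components, each contributing $1$ to the sum); and the SNC model you start from need not be minimal, so the bound $E_i^2\le -1$ and the ensuing numerics require more care.
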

\begin{cor}\label{cor:QpAndQpnr}
	Let \(K\) be a Henselian field with residue field \(k\) and let \(C\) be a smooth projective geometrically irreducible genus \(2\) curve over \(K\).  Assume \(\Char(k) \nmid 30 \).
	\begin{enumerate}
		\item If $k$ is algebraically closed, then \(\calD(C/K) \in \left\{\NN, \NN_{>1}, 2\NN\cup 3\NN\right\}\).
		\item If \(k\) is finite, \(\calD(C/K) \in \left\{2\NN, \NN, \NN_{>1}, 2\NN\cup 3\NN\right\}\).
	\end{enumerate}
	In both cases, all of the possible degree sets are realized by a curve over $K$.
\end{cor}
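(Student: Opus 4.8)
The plan is to deduce both parts from Theorem~\ref{thm:g2} by pinning down, separately for $k$ algebraically closed and for $k$ finite, which sets $\bigcup_{d\in\calD(C_0/k)}d\NN$ arising from a regular genus~$2$ curve $C_0/k$ can actually occur. In both cases these turn out to already be members of the four‑element list in Theorem~\ref{thm:g2}, and in the algebraically closed case the value $2\NN$ will additionally be excluded.

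\textbf{Case $k$ algebraically closed.} If $k=\kbar$ then every closed point of a finite‑type $k$‑scheme has residue field $k$, so $\calD(C_0/k)=\{1\}$ and $\bigcup_d d\NN=\NN$. Theorem~\ref{thm:g2} thus gives $\calD(C/K)\in\{2\NN,\NN,\NN_{>1},2\NN\cup3\NN\}$, and it remains to rule out $2\NN$. Fix a model $X\to S$ with $X$ regular, $X_s$ strict normal crossings and $X_\eta\simeq C$ (\cite[Proposition~10.1.8]{Liu}), and write $X_s=\sum_i m_iE_i$. Because $X_s$ is the pullback of the principal divisor $\{s\}$ on $S$, one has $(X_s\cdot E_j)=0$ for all $j$ and hence $(X_s^2)=0$. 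Set $d=\gcd_i m_i$, so that $D:=\sum_i(m_i/d)E_i$ is an effective divisor with $X_s=dD$ and $(D^2)=0$; applying the adjunction formula on the regular arithmetic surface $X$ to $D$ and to $dD$, and using $(D^2)=0$ together with $p_a(X_s)=p_a(X_\eta)=2$ (flatness of $X\to S$), gives $1+d\bigl(p_a(D)-1\bigr)=2$, whence $d=1$. So some $m_i$ is odd. By Theorem~\ref{thm:MainThm}, $\calD(C/K)=\bigcup_{x\in X_s}\deg_k(x)\calN(x)$; choosing a closed point $x\in E_i$ (so $\deg_k(x)=1$ and $m_i\in\calN(x)$) shows $m_i\in\calD(C/K)$, so $\calD(C/K)$ contains an odd integer and cannot equal $2\NN$. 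Hence $\calD(C/K)\in\{\NN,\NN_{>1},2\NN\cup3\NN\}$.

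\textbf{Case $k$ finite.} A regular genus~$2$ curve $C_0/k$ is smooth ($k$ is perfect) and geometrically irreducible: if $C_0\times_k\kbar$ were a disjoint union of $r\ge2$ Galois‑conjugate smooth curves of genus $g'$, then $2=p_a(C_0)=1+r(g'-1)$, which has no solution with $r\ge 2$. With $q=\#k$, the Hasse--Weil bound $\#C_0(\F_{q^\ell})\ge q^\ell+1-4q^{\ell/2}$ is positive for every prime $\ell$ except possibly for $(q,\ell)\in\{(2,2),(2,3),(3,2)\}$; for these three pairs a short computation with the Weil polynomial of $C_0$ under the hypothesis $C_0(k)=\emptyset$ still gives $\#C_0(\F_{q^\ell})>0$ (for instance $(q,\ell)=(2,2)$ yields $\#C_0(\F_4)=4+2ab$ with real numbers $a,b$ satisfying $a+b=3$ and $|a|,|b|\le2\sqrt2$, forcing $a,b>0$). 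Since $\ell$ is prime, $\#C_0(\F_{q^\ell})>\#C_0(k)$ forces a closed point of degree $\ell$. Therefore either $C_0(k)\ne\emptyset$, in which case $1\in\calD(C_0/k)$ and $\bigcup_d d\NN=\NN$, or else $C_0(k)=\emptyset$, in which case $\calD(C_0/k)$ contains every prime but not $1$, so $\bigcup_d d\NN=\bigcup_{\ell\text{ prime}}\ell\NN=\NN_{>1}$. In either case the exceptional term is $\NN$ or $\NN_{>1}$, and Theorem~\ref{thm:g2} gives $\calD(C/K)\in\{2\NN,\NN,\NN_{>1},2\NN\cup3\NN\}$.

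\textbf{Realizability and main obstacle.} For the last assertion I would exhibit explicit curves and read off their degree sets from Theorem~\ref{thm:MainThm}: $\NN$ from a curve with good reduction possessing a smooth $k$‑rational point on its special fiber; $2\NN\cup3\NN$ from the curve~\eqref{ex:2Ncup3N}; and $\NN_{>1}$ (in both cases) and $2\NN$ (in the finite case) from models as in Proposition~\ref{prop:examples}, by arranging an SNC special fiber whose multiplicities include a transversal pair $\{2,3\}$ but no component of multiplicity $1$ (for $\NN_{>1}$), respectively one in which every odd‑multiplicity component is defined over a quadratic extension of $k$, so that all of its closed points have even degree (for $2\NN$ — possible precisely because $k$ admits such an extension). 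I expect the main obstacle to be the exclusion of $2\NN$ when $k=\kbar$: one must be confident that the intersection‑theoretic and adjunction bookkeeping on the regular arithmetic surface $X$, whose special fiber is typically reducible and nonreduced, genuinely forces $\gcd_i m_i=1$. A lesser nuisance is handling the finitely many small pairs $(q,\ell)$ in the Hasse--Weil step and constructing a curve whose degree set is exactly $2\NN$, which requires a rather constrained special fiber.
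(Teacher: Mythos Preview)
Your plan is correct and mirrors the paper's proof: the finite-field step is exactly Lemma~\ref{lem:finitefields}, the exclusion of $2\NN$ when $k=\kbar$ is Lemma~\ref{lem:SHindex1} (your adjunction computation is the content of \cite[Exercise~9.1.8]{Liu} cited there), and realizability is handled via the curve~\eqref{ex:2Ncup3N} together with the explicit curve of Lemma~\ref{lem:Realizable}.

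Two small points. First, the hypothesis $\Char(k)\nmid 30$ forces $\#k\ge 7$, so the Hasse--Weil bound $q^\ell+1-4q^{\ell/2}>0$ already holds for every prime $\ell$ and your case analysis of $(q,\ell)\in\{(2,2),(2,3),(3,2)\}$ is unnecessary. Second, Proposition~\ref{prop:examples} cannot realize $\NN_{>1}$ in genus~$2$: its degree sets are of the form $2\NN\cup n_1\NN\cup\cdots\cup n_\ell\NN$ with $\sum n_i\le 6$, and no such finite union equals $\NN_{>1}$. What you actually need is precisely the special fiber you describe (multiplicities $\ge 2$ with a transversal $\{2,3\}$ pair at a $k$-point), and the paper supplies it via the curve $y^2=\pi(x^2+\pi)(x^4+\pi)$ of Namikawa--Ueno type $[\textup{III}^*\text{-}\textup{II}_0]$ in Lemma~\ref{lem:Realizable}; the same curve with a nonsquare unit inserted gives $2\NN$.
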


		For fixed genus $g > 2$ we do not have a complete classification of the possible special fibers, but it is known that they fall into finitely many families whose combinatorial data can be bounded in terms of the genus (See \cite[1.6]{AW} and~\cite[10.1.57]{Liu}). Using this we prove the following. 
		
		\begin{thm}\label{thm:bounded}
			Suppose that $K$ is Henselian with {perfect} residue field $k$ that is either finite, {Hilbertian} or algebraically closed and let $g \ge 2$. There are only finitely many possibilities for the degree set $\calD(C/K)$ of a smooth projective and geometrically irreducible curve $C/K$ of genus $g$.
		\end{thm}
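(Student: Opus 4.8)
The plan is to combine Theorem~\ref{thm:MainThm} with the boundedness of the combinatorial data of a strict normal crossings model. Given a smooth projective geometrically irreducible curve $C/K$ of genus $g$, fix by \cite[Proposition 10.1.8]{Liu} a proper flat regular model $X\to S$ with $X_\eta\simeq C$ and $X_s$ a strict normal crossings divisor, so that by Theorem~\ref{thm:MainThm}
\[
	\calD(C/K)=\bigcup_{x\in X_s}\deg_k(x)\,\calN(x).
\]
As $X$ is a surface and $X_s$ has strict normal crossings, every closed point $x\in X_s$ lies either on a single component $E_i$, where $x\in E_i^0$ and $\calN(x)=m_i\NN$, or on exactly two components $E_i,E_j$ meeting transversally, where $\calN(x)=\langle m_i,m_j\rangle$ is the numerical semigroup generated by $m_i$ and $m_j$. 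Since the closed points of $E_i^0$ realize exactly the degrees in $\calD(E_i^0/k)$, and those of $E_i\cap E_j$ the degrees in $\calD(E_i\cap E_j/k)$, grouping the union by the type of $x$ yields
\[
	\calD(C/K)=\bigcup_i\ \bigcup_{d\in\calD(E_i^0/k)}d\,m_i\NN\ \ \cup\ \ \bigcup_{i\neq j}\ \bigcup_{d\in\calD(E_i\cap E_j/k)}d\,\langle m_i,m_j\rangle.
\]

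Next I would invoke the boundedness of the combinatorial data \cite[1.6]{AW}, \cite[10.1.57]{Liu}: there is a constant $B=B(g)$ such that $X_s$ has at most $B$ components, all $m_i\le B$, all intersection numbers $(E_i\cdot E_j)\le B$, and---applying Galois descent to the geometric special fiber---the geometric genera of the $E_i$ and the degrees of their constant fields are at most $B$. This disposes of the intersection-point contributions at once: $E_i\cap E_j$ is a finite $k$-scheme of length $(E_i\cdot E_j)\le B$, so $\calD(E_i\cap E_j/k)\subseteq\{1,\dots,B\}$, and together with $m_i,m_j\le B$ this leaves only finitely many possibilities for each set $\bigcup_{d\in\calD(E_i\cap E_j/k)}d\langle m_i,m_j\rangle$.

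For the component contributions, note first that $X_s$ is connected (Stein factorization, as $X_\eta$ is geometrically connected), so each $E_i$ either meets some other $E_j$---hence carries a closed point of degree $(E_i\cdot E_j)\le B$---or $X_s=m_iE_i$ is irreducible, which forces $E_i^2=0$, whence $p_a(E_i)\ge 2$ (as $m_i(p_a(E_i)-1)=g-1$) and $\delta(E_i/k)\mid 2p_a(E_i)-2\le 2g-2$ via the canonical divisor. Either way $\delta(E_i^0/k)=\delta(E_i/k)\le\max(B,2g-2)$ by the moving lemma \cite[Corollary 6.7]{GLL}; write $\delta_i:=\delta(E_i^0/k)$. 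If $k$ is algebraically closed then $\calD(E_i^0/k)=\{1\}$ (the dense open $E_i^0$ has a rational point) and the contribution is $m_i\NN$. If $k$ is finite or Hilbertian, then---applying Lemma~\ref{lem:finitefields}, resp.\ \cite[Proposition 7.5]{GLL} and \cite[Theorem 2.1]{CMP}, over the constant field of $E_i$, in the form producing infinitely many closed points of each sufficiently large admissible degree, so that deleting the at most $B$ points of $E_i\setminus E_i^0$ leaves large degrees unaffected---there is an $M\le B$ with $\delta_i\NN_{\ge M}\subseteq\calD(E_i^0/k)\subseteq\delta_i\NN$. Hence $\calD(E_i^0/k)$ is pinned down by $\delta_i$ together with $\calD(E_i^0/k)\cap\{1,\dots,\delta_iM\}$, and therefore $\calD(E_i^0/k)$, and with it $\bigcup_{d\in\calD(E_i^0/k)}d\,m_i\NN$, takes only finitely many values.

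Putting this together, every set occurring in the displayed decomposition of $\calD(C/K)$ belongs to a fixed finite family $\calF=\calF(g,k)$ of subsets of $\NN$, and the number of terms is at most $B+B^2$; hence $\calD(C/K)$ is the union of some subfamily of $\calF$, of which there are only finitely many. The step needing the most care is the last one over Hilbertian $k$: one needs the ``sufficiently large'' threshold in \cite[Proposition 7.5]{GLL}/\cite[Theorem 2.1]{CMP} to be \emph{uniform} over all curves of bounded genus over the fixed field $k$, and to survive passing to a nonempty open subscheme---properties that the cited proofs in fact provide. For finite $k$ the corresponding uniformity is immediate from the Weil bounds (and one needs only the constant-field degree, which is bounded by the number of geometric components), and for algebraically closed $k$ the issue does not arise.
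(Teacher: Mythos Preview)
Your overall strategy matches the paper's: express $\calD(C/K)$ via Theorem~\ref{thm:MainThm}, bound the combinatorial data of the special fiber in terms of $g$, and invoke finiteness of degree sets over the residue field. The genuine gap is in the appeal to \cite[1.6]{AW} and \cite[10.1.57]{Liu}. Those results describe the \emph{geometric} special fiber of the \emph{minimal regular model}---which need not have strict normal crossings---as a bounded ``core'' subgraph with arbitrarily long chains of rational curves attached; in particular your claim that $X_s$ has at most $B(g)$ components is simply false. More importantly, they do not bound the data you actually need over $k$: the degrees $\deg_k(x)$ of the intersection points (equivalently, the sizes of the $\Gal(\kbar/k)$-orbits on edges of the dual graph) and the constant-field degrees $[k_i:k]$. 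The paper fills this gap with Proposition~\ref{prop:bounding}: starting from the minimal regular model, one performs a bounded-length sequence of blow-ups to reach an SNC model, and then bounds the degrees of its singular points by bounding the automorphism group of the resulting dual graph, through which the Galois action must factor. The hypothesis $g\ge 2$ enters here: the unbounded chains are attached to a bounded core, which kills their rotational symmetry and forces the automorphism group to have order bounded in terms of $g$. This is the substantive work you have skipped.

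Two smaller points. The false bound on the number of components is in fact inessential to your argument: once each term in your displayed union lies in a fixed finite family $\calF$, the set $\calD(C/K)$ is determined by \emph{which} members of $\calF$ occur, giving at most $2^{|\calF|}$ possibilities regardless of how many terms appear; the paper's proof proceeds in exactly this way. And the uniformity issue you flag in the Hilbertian case is handled by the paper more abstractly, via Theorem~\ref{thm:FinitenessbbD}, which requires only that $\mathbb{D}_{g_0,\delta_0,\calF_0}$ be finite for each $g_0\le g$ and each $\delta_0$ over the class $\calF_0$ of residue fields, rather than an explicit uniform threshold.
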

		
		We prove the slightly stronger Theorem~\ref{thm:FinitenessbbD} in the text, which gives the same conclusion for a larger class of field including, for example, fields such as $\Q_p((t_1))((t_2)) \cdots ((t_\ell))$.

\section*{Acknowledgements}
The first author was partially supported by the Marsden Fund Council administered by the Royal Society of New Zealand. The second author was partially supported by NSF DMS-2101434 and the AMS Birman Fellowship.  Additionally, this material is based partially upon work that was supported by National Science Foundation grant DMS-1928930 while the second author was in residence at the Simons Laufer Mathematical Sciences Institute in Berkeley, California, during the Spring 2023 semester.

\section{The Proof of Theorem~\ref{thm:MainThm}}

Throughout this section we assume that $X$ is a regular scheme, $X \to S = \Spec(R)$ is a proper, flat morphism, and that the generic fiber $X_\eta$ is geometrically irreducible. We use $R^h$ to denote the Henselization of $R$ with respect to the maximal ideal $(\pi) \subset R$ and $R\{t_1,\dots,t_d\}$ to denote the Henselization of the polynomial ring $R[t_1,\dots,t_d]$ with respect to the ideal $(t_1,\dots,t_d)$. Then $R^h\{t_1,\dots,t_d\}$ is canonically identified with the Henselization of $R[t_1,\dots,t_d]$ with respect to the maximal ideal $(\pi,t_1,\dots,t_d)$ by \cite[\href{https://stacks.math.columbia.edu/tag/08HT}{Tag 08HT}]{stacks-project}.

\subsection{The Henselization of the local ring of an SNC-point}

\begin{defn}
	Let $T \to S = \Spec(R)$ be a local scheme with closed point $t \in T$ and let $n_1 \ge \dots \ge n_d \ge 0$ be integers. We call $T$ an \defi{SNC-point with parameters $n_1,\dots,n_d$} if there exist a uniformizer $\pi \in R$, a unit $u \in R^h\{t_1,\dots,t_d\}^\times$ and an isomorphism of local $R$-algebras
	\[
		\calO_{T,t}^h \simeq R^h\{t_1,\dots,t_d\}/(t_1^{n_1}\cdots t_d^{n_d} - u\pi)\,,
	\]
	where $\calO_{T,t}^h$ denotes the Henselization of the local ring of $T$ at $t$.
\end{defn}

The terminology is justified by the following lemma.

\begin{lemma}\label{lem:SNCpoint}
	Suppose $X$ has dimension $d$ and $x \in X_s$ is a closed point with residue field $\kk(x) \simeq \rf$ such that the divisor $X_s \subset X$ has strict normal crossings at $x$ (See~\cite{Liu}*{Definition 9.1.6}). Let $n_1 \ge \dots \ge n_d \ge 0$ denote the multiplicities of the irreducible components of $X_s$ which contain $x$, where we take $n_{i} = 0$ for $i$ greater than the number of irreducible components containing $x$. Then $\Spec(\calO_{X,x}) \to S$ is an SNC-point with parameters $n_1,\dots,n_d$.
\end{lemma}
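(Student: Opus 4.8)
The plan is to use the system of parameters supplied by the SNC hypothesis to exhibit $\calO_{X,x}$ as a quasi-finite cover of $\A^d_R$, and then to identify the Henselization $\calO_{X,x}^h$ with the standard model by combining Zariski's Main Theorem, Nakayama's lemma, and unique factorization. First I would unwind \cite{Liu}*{Definition 9.1.6}: strict normal crossings of $X_s$ at $x$ gives a regular system of parameters $f_1,\dots,f_d$ of the regular local ring $\calO_{X,x}$ (which has dimension $d$, as $X$ is integral of dimension $d$) such that the irreducible components $E_1,\dots,E_r$ of $X_s$ passing through $x$ are cut out near $x$ by $f_1,\dots,f_r$; after relabeling, their multiplicities satisfy $n_1\ge\dots\ge n_r\ge 1$, and I set $n_{r+1}=\dots=n_d=0$. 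Since $X\to S$ is flat, $X_s$ is the effective Cartier divisor $\{\pi=0\}$, so $\divv(\pi)=\sum_{i\le r}n_iE_i$ near $x$; as $\calO_{X,x}$ is a UFD and $f_1,\dots,f_r$ are pairwise non-associate primes, this yields $\pi=w\,f_1^{n_1}\cdots f_d^{n_d}$ for a unit $w\in\calO_{X,x}^\times$. Note $r\ge 1$ (as $x\in X_s$) and $d\ge 1$ (a proper flat $S$-scheme with nonempty generic fiber is not finite over $S$), and that $\calO_{X,x}$, being regular local, is a domain, so $\calO_{X,x}^h\into\widehat{\calO_{X,x}}$ is a regular local domain.

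Next, the parameters define a morphism $g$ from an affine open neighborhood of $x$ to $\A^d_R=\Spec R[t_1,\dots,t_d]$ with $t_i\mapsto f_i$; it sends $x$ to the closed point $o=(\pi,t_1,\dots,t_d)$ and induces the identity on residue fields (here I use $\kk(x)\simeq\rf$). Because $(f_1,\dots,f_d)\calO_{X,x}=\mathfrak{m}_{X,x}$, the morphism $g$ is quasi-finite at $x$, so by Zariski's Main Theorem it factors near $x$ through an open immersion into a scheme finite over $\A^d_R$. Base-changing that finite scheme along $\Spec B\to\A^d_R$, where $B:=\calO_{\A^d_R,o}^h=R^h\{t_1,\dots,t_d\}$ is Henselian local, splits it into a product of Henselian local rings finite over $B$, and the factor at $x$ is $\calO_{X,x}^h$ (Henselization is compatible with finite base change). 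Hence the induced local homomorphism $\psi\colon B\to\calO_{X,x}^h$, $t_i\mapsto f_i$, $\pi\mapsto\pi$, is module-finite. Since $\pi\in(f_1,\dots,f_d)$ we get $\mathfrak{m}_B\calO_{X,x}^h=\mathfrak{m}_{X,x}\calO_{X,x}^h$, so $\calO_{X,x}^h/\mathfrak{m}_B\calO_{X,x}^h=\kk(x)=B/\mathfrak{m}_B$; Nakayama's lemma then shows that $\calO_{X,x}^h$ is generated by $1$ as a $B$-module, i.e.\ that $\psi$ is surjective.

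Finally I would identify $\ker\psi$. It is a prime ideal contained in $\mathfrak{m}_B$ with $B/\ker\psi=\calO_{X,x}^h$ regular local of dimension $d$, while $B$ is regular local of dimension $d+1$; hence $\ker\psi$ has height one and, $B$ being a UFD, is principal. Using surjectivity of $\psi$, lift $w^{-1}\in(\calO_{X,x}^h)^\times$ to a unit $u\in B^\times$; then $e:=t_1^{n_1}\cdots t_d^{n_d}-u\pi$ lies in $\ker\psi$, since $\psi(e)=f_1^{n_1}\cdots f_d^{n_d}-w^{-1}\pi=0$. In $\mathfrak{m}_B/\mathfrak{m}_B^2$, which is free on the classes of $\pi,t_1,\dots,t_d$, the class of $e$ is $-\bar u\bar\pi$ if $\sum n_i\ge 2$ and $\bar t_1-\bar u\bar\pi$ if $\sum n_i=1$ (the case $\sum n_i=0$ is excluded since $r\ge 1$); in either case $e\notin\mathfrak{m}_B^2$, so $e$ cannot be a proper multiple of a generator of the height-one prime $\ker\psi$, and therefore $\ker\psi=(e)$. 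Thus $\calO_{X,x}^h\cong R^h\{t_1,\dots,t_d\}/(t_1^{n_1}\cdots t_d^{n_d}-u\pi)$, which is exactly the assertion that $\Spec(\calO_{X,x})\to S$ is an SNC-point with parameters $n_1,\dots,n_d$.

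The step I expect to be the main obstacle is the middle one: showing that $\calO_{X,x}^h$ is module-finite over $B=R^h\{t_1,\dots,t_d\}$ via the $f_i$. This is where Zariski's Main Theorem is essential, together with the fact that Henselization commutes with finite base change (so that $\calO_{X,x}^h$ really is one of the direct factors after base change to $\Spec B$). Everything afterward is formal manipulation with regular local rings, and one should also take modest care with the dimension and domain assertions about $\calO_{X,x}$, which are routine given the standing hypotheses on $X\to S$.
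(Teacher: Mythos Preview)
Your argument is correct, and it takes a genuinely different route from the paper's proof. The paper proceeds by choosing an affine embedding $X \hookrightarrow Y = \A^n_S$, doing a cotangent-space count to find $f_{d+1},\dots,f_n$ cutting out a regular complete intersection $Z \subset Y$ with $\calO_{X,x} \simeq \calO_{Z,z}/(F)$, and then observing that the substitution $t_i \mapsto f_i$ defines an \emph{\'etale} map $R[t_1,\dots,t_d]_{(\mathfrak{m}')} \to \calO_{Z,z}$; since Henselizations of \'etale local maps admit sections, this yields the desired isomorphism. The paper also treats the case where $X_s$ is regular at $x$ separately, because the rank of $\overline{\mathfrak{m}}_x/\overline{\mathfrak{m}}_x^2$ changes.

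Your approach avoids the ambient embedding entirely: you map directly to $\A^d_R$ via the regular system of parameters, invoke Zariski's Main Theorem to obtain finiteness after Henselization, and then finish with Nakayama and the fact that a height-one prime in the regular local ring $B$ is principal, together with the observation $e \notin \mathfrak{m}_B^2$. This handles the regular and non-regular cases uniformly and is arguably more economical. What the paper's route buys is an explicit \'etale presentation (so one sees the isomorphism rather than deducing it from a kernel computation), which is perhaps closer in spirit to the structure theory of SNC divisors; what your route buys is a shorter proof with fewer case distinctions and no need to track an auxiliary $Z$. Both arguments ultimately exploit the same numerics ($\dim B = d+1$, $\dim \calO_{X,x}^h = d$), just packaged differently.
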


\begin{proof}
	Since the statement concerns the local ring of $X$ at $x$ we may replace $X$ by an affine subscheme containing $x$ as needed. Then we can find an embedding $X \to Y = \A^n_{S}$ for some $n$. Let $y \in Y$ denote the image of $x \in X$ and let $\mathfrak{m}_y \subset \calO_{Y,y}$ and $\mathfrak{m}_x \subset \calO_{X,x}$ denote maximal ideals in the corresponding local rings. Let $\overline{\mathfrak{m}}_y := \mathfrak{m}_y/(\pi) \subset \calO_{Y_s,y}$ and $\overline{\mathfrak{m}}_x := \mathfrak{m}_x/(\pi) \subset \calO_{X_s,x}$ denote the maximal ideals corresponding to $y$ and $x$ viewed as closed points of the special fibers. The maps on cotangent spaces induced by $X \to Y$ and $X_s \to Y_s$ yield a commutative diagram of ${\rf} = \kk(x) = \kk(y)$-vector spaces
		\begin{equation}\label{diag:cotangent}
			\xymatrix{
				0 \ar[r] & I/(I \cap \mathfrak{m}_y^2) \ar[r] & \mathfrak{m}_y/\mathfrak{m}_{y}^2 \ar[r] \ar@{->>}[d]  & \mathfrak{m}_x/\mathfrak{m}_x^2 \ar@{->>}[d] \ar[r]& 0\\
				&& \overline{\mathfrak{m}}_y/\overline{\mathfrak{m}}_{y}^2 \ar[r]  & \overline{\mathfrak{m}}_x/\overline{\mathfrak{m}}_x^2 \ar[r] & 0
				}
		\end{equation}
		where $I \subset \calO_Y(Y)$ is the ideal defining $X \subset Y$. We have $\dim_{\kk(y)}\mathfrak{m}_y/\mathfrak{m}_{y}^2 = \dim(Y) = n+1$, $\dim_{\kk(y)}\overline{\mathfrak{m}}_y/\overline{\mathfrak{m}}_{y}^2 = \dim(Y_s) = n$ and $\dim_{\kk(x)}\mathfrak{m}_x/\mathfrak{m}_{x}^2 = \dim(X) = d$, since $Y$, $Y_s$ and $X$ are regular. Counting dimensions we conclude that there exist $f_{d+1},\dots,f_n \in I$ whose images in $\overline{\mathfrak{m}}_y/\overline{\mathfrak{m}}_{y}^2$ are linearly independent.
		
		Since $X_s = \divv(\pi) \subset X$ has strict normal crossings at $x$, there is a system of parameters $g_1,\dots,g_d \in \calO_{X,x}$ and integers $n_1 \ge \cdots \ge n_d \ge 0$ such that $\ord_x(g_1^{n_1}\dots g_d^{n_d}) = \ord_x(\pi)$. Let $f_1,\dots,f_d \in \calO_{Y,y}$ be lifts of $g_1,\dots,g_d$. If $X_s$ is not regular at $x$, then
		\[
			{\dim X - 1=\dim X_s < \dim \overline{\mathfrak{m}}_x/\overline{\mathfrak{m}}_x^2 \le \dim X = \dim \mathfrak{m}_x/\mathfrak{m}_x^2}
		\]
		so the rightmost vertical map in~\eqref{diag:cotangent} is an isomorphism. It follows that $\pi,f_1,\dots,f_n$ is a system of parameters for $\calO_{Y,y}$. If \(X_s\) is regular at \(x\), then $\dim\overline{\mathfrak{m}}_x/\overline{\mathfrak{m}}_x^2 = d-1$, so we can find $f_1' \in I$ such that $f_1',f_{d+1},\dots,f_n$ have linearly independent images in $\overline{\mathfrak{m}}_y/\overline{\mathfrak{m}}_y^2$. Replacing $f_1$ with this $f_1'$ we again have that $\pi,f_1,\dots,f_n$ is a system of parameters for $\calO_{Y,y}$.
		
		Let $Z = V(f_{d+1},\dots,f_{n}) \subset Y$ and let $z$ denote the point above $y$. Then $Z$ is regular at $z$ and the images of $\pi,f_1,\dots,f_d$ under $\calO_{Y,y} \to \calO_{Z,z}$ (which we will denote with the same symbols) are a system of parameters for $Z$ at $z$. The condition defining the $g_i$ implies that $g_1^{n_1} \cdots g_d^{n_d} = w\pi$ for some unit $w \in \calO_{X,x}^\times$. Thus, in the case that $X_s$ is not regular at $x$ there is a unit $v \in \calO_{Z,z}^\times$ such that the kernel of the surjective map $\calO_{Z,z} \to \calO_{X,x}$ contains $F := f_1^{n_1}\cdots f_d^{n_d} - v\pi$. In the case that $X_s$ is regular at $x$ the kernel contains $F := f_1$. In either case, the morphism of regular local rings $\calO_{Z,z}/(F) \to \calO_{X,x}$ must be an isomorphism as it induces an isomorphism on the tangent spaces.
		
		As $Y = \A^n_S$, the $R$-module $\calO_{Y,y}$ is isomorphic to the localization of the polynomial ring $R[t_1,\dots,t_n]$ at the ideal $\mathfrak{m} = (\pi,t_1,\dots,t_n)$. Under this isomorphism the $f_i$ may be viewed as rational functions in the $t_j$. Note that $\{\pi,t_1,\dots,t_n\}$ and $\{ \pi,f_1,\dots,f_n\}$ generate the same ideal, since the latter were shown above to be a system of parameters for $Y$ at $y$. Thus we have a local $R$-module morphism (i.e., a morphism of local rings that is also a morphism of $R$-modules)
		\[
			R[t_1,\dots,t_n]_{(\mathfrak{m})} \to R[t_1,\dots,t_n]_{(\mathfrak{m})}\,,\quad t_i \mapsto f_i(t_1,\dots,t_n)\,,
		\]
		This induces a local $R$-module morphism
		\[
			\psi: R[t_1,\dots,t_d]_{(\mathfrak{m}')} \simeq \frac{R[t_1,\dots,t_n]_{(\mathfrak{m})}}{(t_{d+1},\dots,t_n)} \to \frac{R[t_1,\dots,t_n]_{(\mathfrak{m})}}{(f_{d+1},\dots,f_n)} \simeq \calO_{Z,z}\,,
		\]
		where $\mathfrak{m}' = (\pi,t_1,\dots,t_d) \subset R[t_1,\dots,t_d]$. Since $\pi,f_1,\dots,f_n$ generate $\mathfrak{m}$, $\psi$ is \'etale. Therefore, the Henselization of $\psi$ admits a section
		\[
			\sigma : \calO_{Z,z}^h \to R^h\{t_1,\dots,t_d\}\,.
		\]
		As these are regular local rings of the same dimension, this implies that $\sigma$ is an isomorphism (whose inverse is the Henselization of $\psi$). Composing with the Henselization of the isomorphism $\calO_{X,x} \simeq \calO_{Z,z}/(F)$ above we obtain an isomorphism
		\[
			\calO_{X,x}^h \simeq \calO_{Z/z}^h/(F) \simeq R^h\{t_1,\dots,t_d\}/(\sigma(F))\,.
		\]
		When $X_s$ is not regular at $x$ we have $(\sigma(F)) = ( t_1^{n_1}\cdots t_d^{n_d} - u\pi)$ with $u = \sigma(v) \in R^h\{t_1,\dots,t_d\}^\times$. When $X_s$ is regular at $x$ we have $\sigma(F) = t_1$ and so we obtain 
		\[
			R^h\{t_1,\dots,t_d\}/(\sigma(F)) = R^h\{t_2,\dots,t_d\} \simeq R^h\{t_1,\dots,t_d\}/ ( t_1^{n_1}\cdots t_d^{n_d} - u\pi)\,.\,
		\]
		since in this case $n_1 = 1$, $n_2 = \cdots = n_d = 0$.
\end{proof}

\begin{lemma}\label{lem:SNCn}
	Let $T \to S$ be an SNC-point with parameters $n_1,\dots,n_d$. Let $n$ be a positive integer. Consider the following statements.
		\begin{enumerate}
			\item\label{it:e=n} There exists a totally ramified extension $R'/R$ of degree $n$ with $T(R') \ne \emptyset$.
			\item\label{it:e} There exists an extension $R'/R$ of ramification index $n$ with $T(R') \ne \emptyset$.
			\item\label{it:n} There exists an extension $R'/R$ of degree $n$ with $T(R') \ne \emptyset$.
			\item\label{it:sum} There are positive integers $a_1,\dots,a_d$ such that $n = \sum a_in_i$. 
		\end{enumerate}
		Then~\eqref{it:e=n} implies~\eqref{it:e} and~\eqref{it:n}, each of which implies~\eqref{it:sum}. If $R$ is Henselian, then all four statements are equivalent.
\end{lemma}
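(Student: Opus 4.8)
The plan is to prove $(1)\Rightarrow(2)$ and $(1)\Rightarrow(3)$, then $(2)\Rightarrow(4)$ and $(3)\Rightarrow(4)$, and finally $(4)\Rightarrow(1)$ under the hypothesis that $R=R^h$ is Henselian; chaining these closes all the asserted equivalences. The first two implications are immediate from the definitions: an extension that is totally ramified of degree $n$ has ramification index $n$ and is in particular an extension of degree $n$.

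For $(2)\Rightarrow(4)$ and $(3)\Rightarrow(4)$ I would translate a point of $T(R')$ into a statement about valuations using the SNC-presentation of $\calO_{T,t}^h$. A point of $T(R')$ is a local $R$-algebra homomorphism $\calO_{T,t}\to R'$; composing with $R'\to(R')^h$ and invoking the universal property of Henselization yields a local $R$-algebra homomorphism $\calO_{T,t}^h\to(R')^h$. By the defining isomorphism $\calO_{T,t}^h\simeq R^h\{t_1,\dots,t_d\}/(t_1^{n_1}\cdots t_d^{n_d}-u\pi)$ and the universal property of $R^h\{t_1,\dots,t_d\}$, such a homomorphism is the same datum as a tuple $(\gamma_1,\dots,\gamma_d)$ with $\gamma_i\in\mathfrak{m}_{(R')^h}$ and $\gamma_1^{n_1}\cdots\gamma_d^{n_d}=w\pi$, where $w\in\bigl((R')^h\bigr)^\times$ is the image of $u$. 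Applying the normalized valuation $v'$ of $(R')^h$ (which agrees with that of $R'$, since $e((R')^h/R')=1$) gives $\sum_i n_i v'(\gamma_i)=v'(\pi)=e(R'/R)$; each $\gamma_i$ with $n_i>0$ is nonzero, so $a_i:=v'(\gamma_i)\ge 1$ for those $i$, and setting $a_i:=1$ when $n_i=0$ yields $e(R'/R)=\sum_i a_i n_i$. This is $(4)$ in case $(2)$; in case $(3)$ the fundamental identity $n=[R':R]=e(R'/R)\,f(R'/R)$ for a finite extension of discrete valuation rings gives $n=\sum_i\bigl(f(R'/R)\,a_i\bigr)n_i$, again of the required form.

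The substance of the lemma is $(4)\Rightarrow(1)$ for $R$ Henselian, where I will produce the extension and the point simultaneously. Given $a_1,\dots,a_d\ge 1$ with $\sum_i a_i n_i=n$, let $R^h\{z\}$ be the Henselization of $R[z]$ at $(\pi,z)$ — a two-dimensional regular local ring, hence a factorial domain — and define the local $R$-algebra homomorphism $\rho\colon R^h\{t_1,\dots,t_d\}\to R^h\{z\}$ by $t_i\mapsto z^{a_i}$ (this is a well-defined local homomorphism since each $a_i\ge1$). Then $\rho(t_1^{n_1}\cdots t_d^{n_d})=z^{\sum_i a_i n_i}=z^n$, so $\rho$ carries $t_1^{n_1}\cdots t_d^{n_d}-u\pi$ to $z^n-v\pi$ with $v:=\rho(u)\in R^h\{z\}^\times$, and hence descends to a local $R$-algebra homomorphism
\[
\bar\rho\colon \calO_{T,t}^h\simeq R^h\{t_1,\dots,t_d\}/(t_1^{n_1}\cdots t_d^{n_d}-u\pi)\longrightarrow R':=R^h\{z\}/(z^n-v\pi).
\]
It then suffices to check that $R'$ is a discrete valuation ring, totally ramified of degree $n$ over $R$: if so, $\bar\rho$ is exactly a point of $T(R')$, proving $(1)$. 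Since $v$ is a unit, $z^n=v\pi$ forces $\mathfrak{m}_{R'}=(z)$, $R'/(z)=R^h\{z\}/(z,\pi)=k$, and $z\ne 0$ in $R'$. To see $R'$ is a domain it is enough, as $R^h\{z\}$ is factorial, that $z^n-v\pi$ be irreducible there; a hypothetical factorization reduces modulo $\pi$ to $\bar f\bar g=z^n$ in the discrete valuation ring $R^h\{z\}/(\pi)$, which after scaling by units forces $f=z^i+\pi f_1$, $g=z^j+\pi g_1$ with $i,j\ge1$ and $i+j=n$; expanding and cancelling $\pi$ gives $z^i g_1+z^j f_1+\pi f_1 g_1=-v$, whose reduction modulo $(z)$ places the image of the unit $v$ in $R^h\{z\}/(z)=R^h$ into $\mathfrak{m}_{R^h}$, a contradiction. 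Thus $R'$ is a one-dimensional Noetherian local domain with principal maximal ideal, i.e.\ a discrete valuation ring with uniformizer $z$; from $\pi=v^{-1}z^n$ one reads off ramification index $n$ and residue degree $1$, and a standard argument — Weierstrass preparation over the completion together with faithfully flat descent of module-finiteness, using $R=R^h$ — identifies $R'$ with $R[z]/(P)$ for an Eisenstein polynomial $P$ of degree $n$, so that $R'/R$ really is a totally ramified extension of degree $n$.

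With $(4)\Rightarrow(1)$ available, the cycles $(4)\Rightarrow(1)\Rightarrow(2)\Rightarrow(4)$ and $(4)\Rightarrow(1)\Rightarrow(3)\Rightarrow(4)$ close the equivalence in the Henselian case. I expect the genuine obstacle to be the last construction — getting a totally ramified extension of exactly the right degree that manifestly carries a point — and within it the verification that $R'=R^h\{z\}/(z^n-v\pi)$ is a discrete valuation ring finite of degree $n$ over $R$; the remaining steps are routine bookkeeping with valuations. One should also fix conventions in $(3)\Rightarrow(4)$: ``extension of degree $n$'' must mean module rank $n$ (equivalently $ef=n$), which is what makes the identity $n=ef$ available.
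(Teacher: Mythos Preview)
Your argument is correct and follows the same overall strategy as the paper: the implications $(1)\Rightarrow(2),(3)$ and $(2),(3)\Rightarrow(4)$ are handled identically (translate a point into a local homomorphism from the SNC presentation, read off valuations), and for $(4)\Rightarrow(1)$ both you and the paper use the substitution $t_i\mapsto z^{a_i}$ to land in $R\{z\}/(z^n-v\pi)$.

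The one place you diverge is in extracting the totally ramified degree-$n$ extension from $R\{z\}/(z^n-v\pi)$. The paper applies a Henselian Weierstrass preparation theorem (citing Moret-Bailly and Bouthier--\v{C}esnavi\v{c}ius) \emph{first}: this produces a unit $w$ and a degree-$n$ Eisenstein polynomial $f\in R[z]$ with $w(z^n-v\pi)=f(z)$, so one simply sets $R':=R[z]/(f)$, which is manifestly finite and totally ramified of degree $n$, and the point in $T(R')$ comes from the universal property of Henselization applied to $R[z]\to R'$. You instead first prove $R\{z\}/(z^n-v\pi)$ is a DVR via an irreducibility argument in the factorial ring $R\{z\}$, and only then invoke Weierstrass (over the completion, with faithfully flat descent) to get finiteness. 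Your irreducibility argument is correct and pleasant, but it is extra work: since you need Weierstrass anyway for finiteness, applying it at the outset as the paper does makes the DVR verification automatic and avoids the descent step. Also note that the Henselian version of Weierstrass preparation applies directly without passing to the completion, which streamlines your final sentence.
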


\begin{proof}
	It is clear that~\eqref{it:e=n} implies both~\eqref{it:e} and~\eqref{it:n}. We now prove that~\eqref{it:e} and~\eqref{it:n} imply~\eqref{it:sum}. Suppose that $R'/R$ is a finite extension with $T(R') \ne \emptyset$. By the universal property of Henselization we have $T(R') = \Hom_{R}(\calO_{T,t},R') = \Hom_{R}(\calO_{T,t}^h,{(R')^h})$. Since $T$ is an SNC-point with parameters $n_1,\dots,n_d$, this yields a local morphism of $R$-modules
	\[
		\phi \colon R^h\{t_1,\dots,t_d\}/(t_1^{n_1}\cdots t_d^{n_d} - u\pi) \to {(R')^h}\,.
	\]
	Let $s_i = \phi(t_i) \in {(R')^h}$. Then $\prod_{i = 1}^d s_i^{n_i} = v\pi$, where $v = \phi(u) \in {((R')^h)}^{\times}$. Using $\nu$ to denote the discrete valuation on ${(R')^h}$ we find 
	\begin{equation}\label{eq:sum}
		\sum_{i = 1}^d \nu(s_i)n_i = \nu(\pi) = e(R'/R)\,.
	\end{equation}
	Since $\phi$ is a local morphism, $\nu(s_i) > 0$ for all $i = 1,\dots,d$. So equation~\eqref{eq:sum} gives the implication~\eqref{it:e} $\Rightarrow$~\eqref{it:sum}. Note that multiplying both sides of~\eqref{eq:sum} by the positive integer $f = [R':R]/e(R'/R)$ we obtain
	\[
		\sum_{i=1}^d f\nu(s_i)n_i = [R':R]\,.
	\]
	So this also proves the implication~\eqref{it:n} $\Rightarrow$~\eqref{it:sum}.
	
	Now assume that $R$ is Henselian. To complete the proof it suffices to prove the implication~\eqref{it:sum} $\Rightarrow$~\eqref{it:e=n}. Suppose $n = \sum_{i=1}^d a_i n_i$ with positive integers $a_i$. The substitutions $t_i = z^{a_i}$ for $i = 1,\dots,d$ define a morphism
	\[
		\psi \colon \calO_{T,t}^h \simeq \frac{R\{t_1,\dots,t_d\}}{(t_1^{n_1}\cdots t_d^{n_d} - u\pi)} \to \frac{R\{z\}}{(z^n - v \pi)}\,,
	\]
	where $v \in R\{z\}^\times$ is some unit.
	
	By the Weierstrass preparation theorem (which holds in this Henselian context by \cite{Moret-Bailly}*{Theorem 1.1(c)} or \cite[3.1.2]{BC}) there is a unit $w \in R\{z\}^\times$ and a polynomial $g(z) \in R[z]$ with coefficients in the maximal ideal of $R$ and degree less than $n$ such that $w(z^n - v \pi) = z^n + g(z)$. Evaluating at $z = 0$ we see that $g(0) = w(0)v(0)\pi$ has valuation $1$. Thus, the polynomial $f(z) := z^n + g(z)$ is an Eisenstein polynomial and so $R' := R[z]/(f(z))$ is a totally ramified extension of degree $n$. This yields the $R$-module morphism $R\{z\}/(z^n - v\pi) = R\{z\}/(f(z)) \to R'$. Composing with $\psi$ we obtain the desired morphism in $\Hom_R(\calO_{T,t}^h,R') = T(R')$.
\end{proof}

\subsection{When $R$ is Henselian}\label{sec:RHens}

If $R$ is Henselian, then there is a reduction map $r : X_\eta^0 \to X_s$, where $X_\eta^0$ denotes the set of closed points of the generic fiber $X_\eta$. For a closed point $P \in X_\eta$, the Zariski closure $\overline{P}$ of $P$ in $X$ is a finite irreducible scheme over $S$. The reduction of $P$ is the closed point $r(P) = \overline{P} \cap X_s \in X_s$ (cf. \cite{Liu}*{Definition 10.1.31}). Given a closed point $x \in X_s$ and a finite extension $L/K$ let us use $X_{\eta}(L)_x$ to denote the set of morphisms in $X_\eta(L)$ whose image in $X_\eta^0$ is a closed point reducing to $x$. If $R_L$ is the integral closure of $R$ in $L$, then properness of $X \to S$ implies that $X_\eta(L)_x \ne \emptyset$ if and only if $\Hom_R(\calO_{X,x},R_L) \ne \emptyset$. 

When $R$ is Henselian we define, for a closed point $x \in X_s$,
		\[
			\calD(x) := \left\{ [L:K] \::\; X_\eta(L)_x \ne \emptyset \right\} =  \left\{ [L:K] \::\; \Hom_R(\calO_{X,x},R_L) \ne \emptyset \right\}\,.
		\]
	\begin{lemma}\label{lem:calD(X)}
		Assume $R$ is Henselian. Then $\calD(X_\eta/K) = \bigcup_{x \in X_s} \calD(x)$.
	\end{lemma}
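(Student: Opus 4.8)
The plan is to prove the two inclusions $\calD(X_\eta/K)\subseteq\bigcup_x\calD(x)$ and $\bigcup_x\calD(x)\subseteq\calD(X_\eta/K)$ separately; the first is formal and the second carries the content. For the first, I would use that because $R$ is Henselian every closed point $P\in X_\eta^0$ has a well-defined reduction $x:=r(P)\in X_s$: the closure $\overline{P}\subset X$ is proper and quasi-finite over $S$, hence finite over $S$, hence the spectrum of a finite $R$-algebra which, being a domain over the Henselian ring $R$, is local, with closed point $r(P)$. The composite $\calO_{X,x}\to\calO_{\overline{P},x}\hookrightarrow R_{\kk(P)}$ — the last map being the inclusion of $\calO_{\overline{P},x}$ into the integral closure of $R$ in $\Frac(\calO_{\overline{P},x})=\kk(P)$ — is then an element of $\Hom_R(\calO_{X,x},R_{\kk(P)})$, so $\deg_K(P)=[\kk(P):K]\in\calD(x)$.

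For the second inclusion I fix $x\in X_s$ and $n\in\calD(x)$ and must exhibit a closed point of $X_\eta$ of degree \emph{exactly} $n$. By definition there is a finite $L/K$ with $[L:K]=n$ and $X_\eta(L)_x\neq\emptyset$. Via the valuative criterion $X_\eta(L)=X(R_L)$, and reduction modulo the maximal ideal of $R_L$ is a locally constant map $X(R_L)\to X_s(\kappa)$ ($\kappa$ the residue field of $R_L$), so $X_\eta(L)_x$, the preimage of the finite set of $\kappa$-points of $X_s$ lying over $x$, is a nonempty open subset of $X_\eta(L)$ in the maximal-ideal-adic topology. A point $\phi\in X_\eta(L)$ whose image in $X_\eta^0$ has residue field a proper subfield $L'\subsetneq L$ lies in the image of $X_\eta(L')\to X_\eta(L)$, and — since $X_\eta$ is geometrically irreducible of positive dimension — the union of these images over all proper subextensions is nowhere dense in $X_\eta(L)$. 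Hence $X_\eta(L)_x$ contains a $\phi'$ whose image is a closed point $Q$ with $\kk(Q)=L$, so $r(Q)=x$ and $\deg_K(Q)=n$, whence $n\in\calD(X_\eta/K)$. (Equivalently I could show that $\calD(X_\eta/K)$ is stable under multiplication by positive integers — a closed point of degree $e$ produces closed points of every degree $ek$ by composing with a totally ramified degree-$k$ extension of its residue field and moving into general position — and note that the closed point underlying any $\phi\in X_\eta(L)_x$ has degree dividing $n$.)

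The main obstacle is the perturbation step in the second inclusion: producing, inside the residue disc $X_\eta(L)_x$, a point that attains the full residue degree $[L:K]$. This requires a careful treatment of the topology on $X_\eta(L)=X(R_L)$ and of the nowhere-density of the loci of points defined over a proper subfield of $L$; it genuinely uses $\dim X_\eta\ge 1$ (the asserted equality fails when $\dim X_\eta=0$), and it needs some extra care when $K$ is imperfect, where those loci should be controlled via the Galois closure of $L/K$ rather than a dimension count over $K$. The remaining ingredients — the reduction map, the identification $X_\eta(L)_x\leftrightarrow\Hom_R(\calO_{X,x},R_L)$, and the first inclusion — are formal given that $R$ is Henselian and $X\to S$ is proper.
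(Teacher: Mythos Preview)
Your proposal is correct. For the inclusion $\calD(X_\eta/K)\subset\bigcup_x\calD(x)$ you argue exactly as the paper does (closure of a closed point is local over $S$ because $R$ is Henselian). For the reverse inclusion you take a genuinely different route: the paper invokes Pop's theorem that Henselian fields are large and then cites \cite{LiuLorenzini}*{Proposition 8.3} as a black box to get, from $X_\eta(L)\ne\emptyset$, a closed point with residue field exactly $L$; you instead unpack that argument directly in the Henselian setting, using that the reduction map is locally constant so $X_\eta(L)_x$ is open, and that the images of $X_\eta(L')$ for proper $K\subset L'\subsetneq L$ are nowhere dense. Your approach is more self-contained and in fact proves the slightly stronger statement that the degree-$n$ closed point can be chosen to reduce to the prescribed $x$; the price is that the nowhere-density step, while correct near smooth $L$-points via the implicit function theorem, needs the extra care you flag when $X_\eta$ has non-smooth points (possible if $K$ is imperfect). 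You are also right to note the hidden hypothesis $\dim X_\eta\ge 1$, which is implicit in the cited Liu--Lorenzini result and is needed for the equality to hold.
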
	
	
	\begin{proof}
		If $R$ is Henselian, then $K$ is a large field \cite{Pop}. It follows from~\cite{LiuLorenzini}*{Proposition 8.3} that for any finite extension $L/K$ with $X_\eta(L) \ne \emptyset$, there exists a closed point $P\in X_\eta$ with residue field $L$. Thus
	\[
		\bigcup_{x \in X_s} \calD(x) \subset \calD(X_\eta/K)\,.
	\]
		The reverse containment holds when $R$ is Henselian because every closed point $P \in X_\eta$ reduces to some closed point $x \in X_s$ {(as opposed to a reducible effective \(0\)-cycle)}.
	\end{proof}
	\begin{prop}\label{prop:D(x)}
		Assume $R$ is Henselian. If $x \in X_s$ is a closed point such that $X_s$ has strict normal crossings at $x$, then $\calD(x) = \deg_\rf(x)\calN(x)$. 
	\end{prop}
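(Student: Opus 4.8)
The plan is to reduce to the case of a rational point, i.e.\ $\kk(x)=k$, where the assertion is exactly the combination of Lemmas~\ref{lem:SNCpoint} and~\ref{lem:SNCn}, and then to remove this hypothesis by an unramified base change. Throughout I use that any homomorphism occurring in $\Hom_R(\calO_{X,x},R_L)$ is local: a point of $X_\eta(L)_x$ extends, by the valuative criterion of properness, to a morphism $\Spec R_L\to X$ whose closed point maps to $x$, so the induced ring map $\calO_{X,x}\to R_L$ carries $\mm_x$ into $\mm_{R_L}$.

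Set $f_0:=\deg_k(x)=[\kk(x):k]$. Since $k$ is perfect, $\kk(x)/k$ is separable, so there is a unique unramified (in particular finite étale) extension $R_0/R$ of degree $f_0$ with residue field $\kk(x)$; it is again Henselian, with fraction field $K_0$. Put $X_0:=X\times_S\Spec R_0$. Base change along the finite étale morphism $\Spec R_0\to S$ preserves regularity, properness and flatness and does not affect geometric irreducibility of the generic fibre, so $X_0\to\Spec R_0$ again satisfies the hypotheses in force. The fibre of $X_0\to X$ over $x$ is $\Spec(\kk(x)\otimes_k\kk(x))$, and the multiplication map $\kk(x)\otimes_k\kk(x)\to\kk(x)$ singles out a closed point $x_0\in X_0$ lying over $x$ and over the closed point $s_0$ of $\Spec R_0$, with $\kk(x_0)=\kk(x)=\kk(s_0)$. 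An étale base change carries a regular system of parameters at $x$ cutting out the components of $X_s$ to one with the same property at $x_0$, so $(X_0)_{s_0}$ has strict normal crossings at $x_0$ and the multiplicities of the components through $x_0$ are exactly the $n_1,\dots,n_r$ appearing at $x$. Now Lemma~\ref{lem:SNCpoint}, applied to $X_0\to\Spec R_0$ at $x_0$ (legitimate because $\kk(x_0)=\kk(s_0)$), shows $\Spec\calO_{X_0,x_0}$ is an SNC-point over $\Spec R_0$ with parameters $n_1,\dots,n_d$, and Lemma~\ref{lem:SNCn} (valid since $R_0$ is Henselian) gives $\calD_{X_0}(x_0):=\{[L':K_0]:\Hom_{R_0}(\calO_{X_0,x_0},R_{L'})\neq\emptyset\}=\calN(x_0)=\calN(x)$.

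It therefore remains to prove $\calD(x)=f_0\cdot\calD_{X_0}(x_0)$. For the inclusion $\supseteq$: given $m\in\calD_{X_0}(x_0)$, the equivalence of conditions~\eqref{it:sum} and~\eqref{it:e=n} in Lemma~\ref{lem:SNCn} over $R_0$ produces a totally ramified extension $R_0'/R_0$ of degree $m$ together with a local $R_0$-algebra map $\calO_{X_0,x_0}\to R_0'$; composing with the canonical local map $\calO_{X,x}\to\calO_{X_0,x_0}$ yields $\Hom_R(\calO_{X,x},R_0')\neq\emptyset$, and since $R_0'$ is a discrete valuation ring, finite of degree $mf_0$ over $R$ and therefore the integral closure of $R$ in $L':=\Frac R_0'$, we get $mf_0=[L':K]\in\calD(x)$. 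For the inclusion $\subseteq$: given a local map $\phi\colon\calO_{X,x}\to R_L$, it induces a $k$-embedding $\kk(x)\hookrightarrow\kk_L$ of residue fields; by the Henselian property this lifts uniquely to an $R$-embedding $R_0\hookrightarrow R_L$, so $K_0\hookrightarrow L$ and $[L:K_0]=[L:K]/f_0$ (in particular $f_0\mid[L:K]$). The data $(\phi,\,R_0\hookrightarrow R_L)$ determine an $R_0$-algebra map $\calO_{X_0,x_0}\to R_L$ — here one uses that the closed point of $\Spec R_L$ lands on the diagonal point $x_0$, precisely because both factors of $\kk(x)\otimes_k\kk(x)$ reach $\kk_L$ through the same embedding — which is again local, so $[L:K_0]\in\calD_{X_0}(x_0)$ and $[L:K]=f_0[L:K_0]\in f_0\calD_{X_0}(x_0)$. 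Combining the two inclusions with the identification of the previous paragraph gives $\calD(x)=f_0\calN(x)=\deg_k(x)\calN(x)$.

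The part I expect to require the most care is the residue-field bookkeeping: checking that the unramified base change $X_0/R_0$ inherits all the standing hypotheses, that it acquires an SNC point $x_0$ over $x$ with $\kk(x_0)=\kk(s_0)$ and with unchanged component multiplicities, and that degrees, ramification indices and integral closures correspond correctly under passage between extensions of $R$ and of $R_0$. Once this is in place the two inclusions are short applications of Lemmas~\ref{lem:SNCpoint} and~\ref{lem:SNCn}.
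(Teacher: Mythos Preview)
Your proof is correct and follows essentially the same strategy as the paper: pass to the unramified extension $R_0/R$ of degree $\deg_k(x)$ so that $x$ acquires a rational lift $x_0$, apply Lemmas~\ref{lem:SNCpoint} and~\ref{lem:SNCn} there, and then transfer degrees back and forth. The only cosmetic difference is in the inclusion $\calD(x)\subset\deg_k(x)\calN(x)$: the paper base-changes to the maximal unramified subextension $K'\subset L$ (which may be larger than $K_0$) and finishes with the divisibility $\deg_k(x)\mid[K':K]$ together with closure of $\calN(x)$ under positive multiples, whereas you embed the fixed $K_0$ into $L$ and argue directly that the induced $\Spec R_L$-point of $X_0$ lands at the diagonal lift $x_0$.
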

	
	\begin{proof}
		First we show that $\calD(x)$ contains $\deg_k(x)\calN(x)$. Since $k$ is perfect, there is a unique unramified extension $K'/K$ of degree $\deg_k(x)$ corresponding to the residue field extension $\kk(x)/k$. Use $R'$ and $k' \simeq \kk(x)$ to denote the corresponding ring of integers and residue field, and write \(S' := \Spec(R')\). Then $S' \to S$ is \'etale, so the base change $X' = X \times_S S'$ is regular and $X' \to S'$ is a proper, flat morphism with geometrically irreducible generic fiber. Moreover, the special fiber of $X'$ contains a $k'$-point $x'$ above $x$. By Lemma~\ref{lem:SNCpoint}, $x' \in X'$ is an SNC-point with parameters equal to the multiplicities of the irreducible components passing through $x'$. Note that these are the same as the multiplicities of the irreducible components of $X_s$ passing through $x$, so $\calN(x') = \calN(x)$. By the equivalence of~\eqref{it:n} and~\eqref{it:sum} in Lemma~\ref{lem:SNCn} (applied to $x' \in X'$ with $k'$ in place of $k$) we have that $\calD(x') = \calN(x') = \calN(x)$. A closed point $P' \in X'_{K'}$ reducing to $x'$ yields a closed point $P \in X$ of degree $$\deg_K(P) = \deg_{K'}(P')[K':K] =  \deg_{K'}(P')\deg_k(x)$$ reducing to $x$. Thus we conclude that $\deg_k(x)\calN(x) \subset \calD(x)$.
		
	Conversely, suppose $L/K$ is a finite extension with $X_\eta(L)_x \ne \emptyset$. Let $K \subset K' \subset L$ be the maximal unramified subextension, {and let} $R'$ and $k'$ denote the corresponding valuation ring and residue field. Then the special fiber of $X' = X \times_S \Spec(R')$ contains a $k'$-point $x'$ above $x$ with $X'_\eta(L)_{x'} \ne \emptyset$. The equivalence of~\eqref{it:e} and~\eqref{it:sum} in Lemma~\ref{lem:SNCn}  implies that $e(L/K') = e(L/K) \in \calN(x') = \calN(x)$. Moreover, $\deg_{\kk(x)}(x')$ divides $[K':K]$, so $[L:K] \in \deg_k(x)\calN(x)$.
\end{proof}

\begin{proof}[Proof of Theorem~\ref{thm:MainThm}]
	This follows immediately from Lemma~\ref{lem:calD(X)} and Proposition~\ref{prop:D(x)}.
\end{proof}

\section{When $X_s$ does not have strict normal crossings}

	In this section we show how the set $\calD(x)$ introduced in Section~\ref{sec:RHens} can be described when $X_s$ does not have strict normal crossings at $x$ by using ideas in~\cite{GLL}*{Section 8}. Note that we do not rely on any moving lemmas from~\cite{GLL}, nor does the material in Sections~\ref{sec:DegCurves} and~\ref{sec:Genus2} rely on the material in this section. 
	
	As before we assume that $X$ is a regular scheme, $X \to S = \Spec(R)$ is a proper, flat morphism, and that the generic fiber $X_\eta$ is geometrically irreducible. For a closed point $x \in X_s$, the blow up of $\tilde{X} \to X$ at $x$ is a regular scheme with a proper flat morphism $\tilde{X} \to S$ and $\tilde{X}_\eta \simeq X_\eta$. By \cite{GLL}*{Proposition 8.3(1)}, the exceptional divisor of $\tilde{X} \to X$ is an irreducible component of the special fiber of $\tilde{X}$ isomorphic to $\PP_{\kk(x)}^{\dim X_s}$ with multiplicity equal to $\sum_{x \in E_i} m_ie(E_i,x)$, where $e(E_i,x)$ denotes the Hilbert-Samuel multiplicity of $x$ on $E_i$. Given a sequence 
	\begin{equation*}
		B_x \;:= \;\left( X_\ell \to X_{\ell-1} \to \cdots \to X_0 = X \right)
	\end{equation*}
	where each $X_i \to X_{i-1}$ is the blow up of a closed point $x_{i}$ in the special fiber of $X_{i-1}$ lying above $x$, let $e(B_x)$ denote the multiplicity of the exceptional divisor of $X_\ell \to X_{\ell-1}$ and let $d(B_x) = \deg_{\kk(x)}(x_\ell)e(B_x)$. Define 
	\[\calM(x) := \bigcup_{B_x} d(B_x)\NN \quad \text{and} \quad \calM'(x) := \bigcup_{B_x}e(B_x)\NN,\]
	where the unions run over all finite sequences of blow ups $B_x$ as above. By induction using the quoted formula for the multiplicity of an exceptional divisor we see that
	\[
		\calM(x) \subset \calM'(x) \subset \calN(x)\,.
	\]
	\begin{prop}\label{prop:D(x)M}
		Suppose $R$ is Henselian and $x \in X_s$ is a closed point. Then $$\calD(x) \subset \deg_\rf(x)\calM(x),$$ with equality if the residue field is infinite.
	\end{prop}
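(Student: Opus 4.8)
The plan is to compute $\calD(x)$ from the degree sets at strict normal crossings points of the special fibres of iterated blow-ups of $X$ along the fibre over $x$, and to invoke Proposition~\ref{prop:D(x)} at those points. Throughout one uses two bookkeeping facts about a finite blow-up sequence $B_x = (X_\ell \to \cdots \to X_0 = X)$ as in the definition of $\calM(x)$, with exceptional divisor $E_i$ for $X_i \to X_{i-1}$ and last centre $x_\ell$. First, since $R$ is Henselian and each $X_i \to S$ is proper, a closed point $P \in X_\eta = (X_i)_\eta$ reducing to $x$ on $X$ reduces on $X_i$ to a single closed point, which again lies over $x$, and conversely any closed point of $(X_i)_\eta$ whose reduction on $X_i$ lies over $x$ reduces to $x$ on $X$; hence $\calD_{X_i}(z) \subseteq \calD(x)$ for every closed point $z$ of $(X_i)_s$ lying over $x$, where $\calD_{X_i}$, and later $\calN_{X_i}$, denote the analogues for $X_i$ of the invariants defined for $X$. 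Second, $x_\ell$ lies over $x$, so $\kk(x) \subseteq \kk(x_\ell)$ and by multiplicativity of degrees $\deg_k(x)\,d(B_x) = \deg_k(x)\,\deg_{\kk(x)}(x_\ell)\,e(B_x) = \deg_k(x_\ell)\,e(B_x)$.

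For the inclusion $\calD(x) \subseteq \deg_k(x)\calM(x)$, I would take a closed point $P \in X_\eta$ reducing to $x$, with residue field $L$, and the associated section $\Spec R_L \to X$, and blow up repeatedly along $P$: put $x_1 = x$, let $X_1 \to X$ be the blow-up at $x_1$, and inductively let $x_{i+1}$ be the reduction of $P$ on $X_i$ (a closed point of $E_i$, lying over $x_i$) and $X_{i+1} \to X_i$ the blow-up at $x_{i+1}$. The key assertion, addressed below, is that for some $\ell$ the point $x_{\ell+1}$ lies on a single irreducible component of $(X_\ell)_s$. Granting it: as $x_{\ell+1} \in E_\ell$, that component is $E_\ell$, which by \cite{GLL}*{Proposition 8.3(1)} is regular of multiplicity $e(B_x)$ for $B_x = (X_\ell \to \cdots \to X_0)$; so $(X_\ell)_s$ has strict normal crossings at $x_{\ell+1}$, $\calN_{X_\ell}(x_{\ell+1}) = e(B_x)\NN$, and Proposition~\ref{prop:D(x)} gives $[L:K] \in \calD_{X_\ell}(x_{\ell+1}) = \deg_k(x_{\ell+1})\,e(B_x)\,\NN$. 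Since $\kk(x_\ell) \subseteq \kk(x_{\ell+1})$, the integer $\deg_k(x)\,d(B_x) = \deg_k(x_\ell)\,e(B_x)$ divides $\deg_k(x_{\ell+1})\,e(B_x)$, hence divides $[L:K]$, so $[L:K] \in \deg_k(x)\,d(B_x)\,\NN \subseteq \deg_k(x)\calM(x)$.

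For the reverse inclusion when $k$ is infinite, I would fix a blow-up sequence $B_x = (X_\ell \to \cdots \to X_0)$ with last exceptional divisor $E_\ell$ over $x_\ell$, of multiplicity $\mu = e(B_x)$; the case $\dim X_s = 0$ is trivial, so assume $\dim X_s \ge 1$, whence $E_\ell \cong \PP^{\dim X_s}_{\kk(x_\ell)}$ by \cite{GLL}*{Proposition 8.3(1)}. The remaining components of $(X_\ell)_s$ meet $E_\ell$ in a proper closed subset, and since $\kk(x_\ell) \supseteq k$ is infinite there is a $\kk(x_\ell)$-rational point $z \in E_\ell$ off that subset. Then $(X_\ell)_s$ has strict normal crossings at $z$ with $\calN_{X_\ell}(z) = \mu\NN$, and $\deg_k(z) = \deg_k(x_\ell)$, so Proposition~\ref{prop:D(x)} yields $\calD_{X_\ell}(z) = \deg_k(z)\,\mu\,\NN = \deg_k(x_\ell)\,e(B_x)\,\NN = \deg_k(x)\,d(B_x)\,\NN$. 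This lies in $\calD(x)$ by the first bookkeeping fact, and taking the union over all $B_x$ gives $\deg_k(x)\calM(x) \subseteq \calD(x)$; with the inclusion above, equality.

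I expect the main obstacle to be the termination assertion: that repeatedly blowing up along a fixed section $\Spec R_L \to X$ reaches, in finitely many steps, a reduction point on a single component of the special fibre. Here one writes $\phi_i \colon \calO_{X_i, x_{i+1}} \to R_L$ for the induced local map and considers $\sigma_i := \sum_j \nu_L(\phi_i(g_j))$, the sum over the components of $(X_i)_s$ through $x_{i+1}$ of the orders of vanishing of their local equations $g_j$ along the section. One has $\sigma_i \le \sum_j m_j \nu_L(\phi_i(g_j)) = \nu_L(\pi) = e(L/K)$, a fixed bound, and — using that blowing up a point drops the order of contact of the section with each component through it, while components transverse to the section are separated from it — one checks that $\sigma_i$ strictly decreases at every blow-up at which $x_{i+1}$ lies on more than one component (the relevant local computation being that of \cite{GLL}*{Section 8}). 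Hence after at most $e(L/K)$ blow-ups the reduction point lies on a single component, as required; the rest of the argument is then formal.
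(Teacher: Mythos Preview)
Your proof is correct and follows essentially the same strategy as the paper's: iterate blow-ups above $x$ until the reduction of $P$ lies on a single (necessarily regular) component, then apply the SNC result there; and for the reverse inclusion, use infinitude of $k$ to find a rational point on the last exceptional divisor away from the other components. The only notable difference is that the paper disposes of the termination step by citing \cite{GLL}*{Proposition~8.3(3)} directly, whereas you re-derive it via the decreasing invariant $\sigma_i$ bounded by $e(L/K)$ --- your sketch is sound (one has $a_j \ge e_j c$ since $g_j \in \frakm^{e_j}$, whence $\sigma_i - \sigma_{i+1} = c(\sum_j e_j - 1) \ge 1$ when $r \ge 2$), but in practice the citation is cleaner.
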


	\begin{proof}
	
	Let $P \in X_\eta$ be a closed point with residue field $L$ such that $r(P) = x \in X_s$. For any sequence of blow ups $B_x = \left(X_\ell \to X_{\ell-1} \to \cdots \to X_0 = X\right)$ above $x$, the closure of $P$ in $X_\ell$ intersects $(X_\ell)_s$ in a closed point, because $R$ is assumed to be Henselian. By \cite[Proposition 8.3(3)]{GLL}, there is a sequence $B_x$ such that the closure of $P$ intersects $(X_\ell)_s$ in a closed point $y$ that is a regular point on $((X_\ell)_s)_{\red}$. This regular point must lie on a unique irreducible component $E$ of $(X_\ell)_s$. We can assume that $y$ lies on the exceptional divisor of $X_\ell \to X_{\ell-1}$, so that the multiplicity of $E$ is $e(B_x)$. Then $y$ is an SNC-point with parameters $n_1 = e(B_x)$, $n_2 = \cdots = n_d = 0$. By Lemma~\ref{lem:SNCn} we have $e(L/K) \in e(B_x)\NN$. Therefore
	\[
		\deg_k(x)e(B_x) \mid \deg_k(x)e(L/K) \mid \deg_k(y)e(L/K) \mid [L:K]\,.
	\]
	By definition $\calM(x)$ contains all multiples of $e(B_x)$, so the sequence of divisibility relations above shows that $[L:K] \in \deg_k(x)\calM(x)$. This proves that $\calD(x) \subset \deg_k(x)\calM(x)$.
	
	Now suppose $k$ is infinite and let $d \in \calM(x)$. It will suffice to show that the generic fiber of $X$ has a closed point $P$ of degree dividing $\deg_k(x)d$ such that $P$ reduces to $x$. By definition of $\calM(x)$ there is a sequence of blow ups $B_x$ above $x$ such that $d$ is a multiple of the integer $d(B_x)$. Let $E$ be the final exceptional divisor arising in the sequence $B_x$. Then $E \simeq \PP_{k'}^{\dim X_s}$ for some finite extension $k'/\kk(x)$ and $d(B_x) = [k':\kk(x)]e(B_x)$, where $e(B_x)$ is the multiplicity of $E$. Since $k$ is infinite, the $k'$-points on $E$ are Zariski dense. In particular $E$ must contain a $k'$-point which is a regular point on the reduced special fiber. By Lemma~\ref{lem:SNCn} the generic fiber has a closed point of degree
	\[
	[k':k]e(B_x) = \deg_k(x)[k':\kk(x)]e(B_x) = \deg_k(x)d(B_x) \mid \deg_k(x)d.
	\]
	Thus $\deg_k(x)\calM(x) \subset \calD(x)$.
\end{proof}

\begin{thm}\label{thm:general}
	Assume that $R$ is Henselian. Then
\[
	\bigcup_{x \in X_s^\textup{SNC}} \deg_{{\rf}}(x)\calN(x)\subset\calD(X_\eta/K) \subset \bigcup_{x \in X_s} \deg_{{\rf}}(x)\calM(x) \subset \bigcup_{x \in X_s} \deg_{{\rf}}(x)\calN(x)\,,
\]
where $X_s^\textup{SNC}$ denotes the set of points where $X_s$ has strict normal crossings. If the residue field $k$ is infinite, then $\calD(X_\eta/K) = \bigcup_{x \in X_s} \deg_{{\rf}}(x)\calM(x)$.
\end{thm}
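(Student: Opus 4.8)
The plan is to deduce Theorem~\ref{thm:general} by assembling the three set-theoretic containments from the results already in hand, plus the equality statement in the infinite residue field case, using Lemma~\ref{lem:calD(X)} to reduce everything to a union over closed points $x \in X_s$ of the local sets $\calD(x)$. First I would invoke Lemma~\ref{lem:calD(X)}, valid since $R$ is Henselian, to write $\calD(X_\eta/K) = \bigcup_{x \in X_s} \calD(x)$. This immediately pins the degree set between any lower bound on some of the $\calD(x)$ and any upper bound on all of them.

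\smallskip

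For the leftmost containment, I would restrict the union to $x \in X_s^{\textup{SNC}}$ and apply Proposition~\ref{prop:D(x)}, which gives $\calD(x) = \deg_{\rf}(x)\calN(x)$ at every SNC-point; hence $\bigcup_{x \in X_s^{\textup{SNC}}} \deg_{\rf}(x)\calN(x) = \bigcup_{x \in X_s^{\textup{SNC}}} \calD(x) \subset \bigcup_{x \in X_s}\calD(x) = \calD(X_\eta/K)$. For the middle containment, I would apply Proposition~\ref{prop:D(x)M}, which gives $\calD(x) \subset \deg_{\rf}(x)\calM(x)$ for \emph{every} closed point $x \in X_s$ (no SNC hypothesis needed there); taking the union over all $x$ yields $\calD(X_\eta/K) \subset \bigcup_{x \in X_s} \deg_{\rf}(x)\calM(x)$. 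The rightmost containment is then just the inclusion $\calM(x) \subset \calN(x)$ established immediately before Proposition~\ref{prop:D(x)M} (by induction on the length of the blow-up sequence using the formula $\sum_{x_i \in E_i} m_i e(E_i, x_i)$ for the multiplicity of an exceptional divisor), applied termwise.

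\smallskip

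Finally, for the equality when $k$ is infinite, I would note that Proposition~\ref{prop:D(x)M} actually asserts $\calD(x) = \deg_{\rf}(x)\calM(x)$ in that case, so taking the union over all closed points $x \in X_s$ and again invoking Lemma~\ref{lem:calD(X)} gives $\calD(X_\eta/K) = \bigcup_{x \in X_s}\calD(x) = \bigcup_{x \in X_s}\deg_{\rf}(x)\calM(x)$.

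\smallskip

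I do not expect any genuine obstacle here: the theorem is a packaging statement, and all the analytic content lives in Lemma~\ref{lem:SNCpoint}, Lemma~\ref{lem:SNCn}, Proposition~\ref{prop:D(x)}, and Proposition~\ref{prop:D(x)M}. The only point requiring a little care is making sure the hypotheses of each cited result are met --- in particular that Lemma~\ref{lem:calD(X)} and Proposition~\ref{prop:D(x)M} both require only that $R$ be Henselian (which is assumed), while Proposition~\ref{prop:D(x)} is applied solely at points of $X_s^{\textup{SNC}}$ --- and checking that the chain of inclusions is oriented consistently so that the stated four-term display reads correctly from left to right.
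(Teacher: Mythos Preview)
Your proposal is correct and matches the paper's own proof essentially line for line: the paper likewise derives the first containment from Lemma~\ref{lem:calD(X)} and Proposition~\ref{prop:D(x)}, the middle containment (and the equality for infinite $k$) from Lemma~\ref{lem:calD(X)} and Proposition~\ref{prop:D(x)M}, and the final containment from the inclusion $\calM(x)\subset\calN(x)$ observed just before Proposition~\ref{prop:D(x)M}.
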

\begin{proof}
	The first containment follows from Lemma~\ref{lem:calD(X)} and Proposition~\ref{prop:D(x)}. The middle containment (and the claim that it is an equality when $k$ is infinite) follow from Lemma~\ref{lem:calD(X)} and Proposition~\ref{prop:D(x)M}. The final containment follows from the observation made just before Proposition~\ref{prop:D(x)M} that $\calM(x) \subset \calN(x)$, for all $x \in X_s$.
\end{proof}

\subsection{Necessity of the hypotheses}\label{sec:hypotheses}

	Comparing Proposition~\ref{prop:D(x)} and Proposition~\ref{prop:D(x)M} we see that $\calN(x) = \calM(x)$ for any closed point $x \in X_s$ at which $X_s$ has strict normal crossings. The following example shows that, in general, any of the containments 
	\[
		 \calM(x) \subset \calM'(x) \subset \calN(x)
	\]	
	can be proper.
	
	\begin{example}\label{lem:exM}
		Suppose $X \to S$ is a relative curve and $E \subset X_s$ is an irreducible component of multiplicity $m$ with a simple node $x \in E(k)$ which does not lie on any other irreducible component of $X_s$. Then $\calN(x) = m\NN$, and $\calM'(x) = m\NN_{>1}$.
		\begin{enumerate}
			\item\label{it:tans} If the tangent directions to $E$ at $x$ are defined over $k$, then $\calD(x) = \calM(x) = m\NN_{>1}$.
			\item If the tangent directions to $E$ are conjugate over $k$, then $\calD(x) = \calM(x) = 2m\NN$.
		\end{enumerate}
	\end{example}
	
	\begin{proof}
		We have $\calN(x) = m\NN$ by definition. The Hilbert-Samuel multiplicity of $x$ on $E$ is $2$. The exceptional divisor $E'$ of the blow up of $X$ at $x$ has multiplicity $2m$ by \cite[Proposition 8.3(1)]{GLL} and meets the strict transform of $E$ transversally at either two distinct closed points with residue field $k$ or at a single closed point with residue field a quadratic extension of $k$, correspondingly as the tangent directions to $E$ at $x$ are or are not defined over $k$. The exceptional divisor of the subsequent blow up at these points  has multiplicity $3m$, and by induction one finds that $\calM'(x) = m\NN_{>1}$. In the case these points have residue field $k$, we have $\calM(x) = \calM'(x)$. In the case the residue field is a quadratic extension of $k$ we find $\calM(x) = 2m\NN$.
	\end{proof}

	The following example shows that Proposition~\ref{prop:D(x)} can fail if the strict normal crossings hypothesis is omitted, even if $R$ is Henselian and all irreducible components are regular.
	
	\begin{example}
		Suppose $X \to S$ is a relative curve whose special fiber is the union of two irreducible components $E_1,E_2$ which are regular and have multiplicities $m_1 = 2$ and $m_2 =  3$ and are such that the $E_1\cdot E_2 = 2x$ for some closed point $x \in X_s(k)$. Then $\calN(x) = \langle 2,3\rangle = \NN_{\ge 5}$, but $\calM(x) \subset 5\NN \cup \langle 2,10\rangle \NN \cup \langle 3,10\rangle\NN$, which is a proper subset of $\calN(x)$.
	\end{example}
	
	\begin{proof}
		$\calN(x) = \langle 2,3\rangle = \NN_{\ge 5}$ follows from the definition of $\calN(x)$. The exceptional divisor $E'$ of the blow up of $X$ at $x$ is a $\PP^1_k$ of multiplicity $5$, intersecting the strict transforms of $E_1$ and $E_2$ transversally at the same point $x' \in E'(k)$. The further blow up at $x'$ yields a special fiber consisting of an exceptional divisor or degree $10 = 2 + 3 + 5$ meeting the other components transversally at distinct points. The multiplicity of the exceptional divisor in any subsequent blow up at a closed point above $x$ must therefore be in one of the semigroups $\langle 5 \rangle$, $\langle 2, 10\rangle$ or $\langle 3, 10\rangle$.
	\end{proof}

	The next example shows that the containment $\calD(X_\eta/K) \subset \bigcup \deg_k(x)\calN(x)$ can fail when $R$ is not Henselian, even if $X_s$ is assumed to be a strict normal crossings divisor. 
	
	\begin{example}\label{ex:NotHens}
		Suppose $X \to S$ is the minimal proper regular model of the genus $2$ hyperelliptic curve considered in the introduction whose special fiber (which has strict normal crossings) is given by Figure~\eqref{fig:1}. Then $\bigcup_{x \in X_s} \deg_k(x)\calN(x) = 2\NN \cup 3\NN$. If $K$ is Hilbertian (e.g., $R = \Z_{(p)}$ and $K = \Q$), then $\calD(X_\eta/K) = \NN_{>1}$.
	\end{example}
	
	\begin{proof}
		It was shown in the introduction that $\bigcup_{x \in X_s} \deg_k(x)\calN(x) = 2\NN \cup 3\NN$. The assumption on $(\alpha^2-4) \in K^{\times 2}$ implies that the defining polynomial $\pi(x^6 + \alpha \pi x^3 + \pi^2)$ factors as a product of degree $3$ polynomials, giving points of odd degree on $X_\eta$. Hence, $X_\eta$ has index $1$. If $K$ is Hilbertian, then this implies that $\calD(X_\eta/K)$ is cofinite in $\NN$ by \cite[Proposition 7.5]{GLL}.
	\end{proof}
					
		\begin{rmk}
			If in the example above one instead takes $\alpha$ so that $\alpha^2 - 4\ \in R^\times \setminus R^{\times 2}$, then the two geometric components of multiplicity $3$ in Figure~\eqref{fig:1} will be Galois conjugate. In this case $X_s$ contains only one component of multiplicity $3$ (which is geometrically \emph{reducible}) intersecting the multiplicity $2$ component in a degree $2$ point, and so $2\NN \subset \deg_k(x)\calN(x)$ for all $x \in X_s$.
		\end{rmk}

\section{Degree sets of curves}\label{sec:DegCurves}

In this section, we prove Theorem~\ref{thm:bounded} from the introduction. We deduce this from a more general theorem (Theorem~\ref{thm:FinitenessbbD}) about the boundedness of degree sets, which we prove by combining Theorem~\ref{thm:MainThm} with results of~\cites{AW, Liu} that bound the possible combinatorial configurations of minimal regular models of genus \(g\) curves. 

%
%
%
%

\subsection{Finiteness of the possible degree sets for curves of fixed genus}\label{sec:thmFiniteness}

Let \(\calF\) be a set of fields and let \(g,\delta\in \Z\) with \(g\geq 0\) and \(\delta\geq 1\).  Let 
\(	\calC_{g,\delta,F}	\) denote the set of smooth proper geometrically integral curves of genus \(g\) with index \(\delta\) over \(F\).  We define
\[
	\mathbb{D}_{g,\delta,\calF} := \{\calD(C/F) : F\in \calF, C \in \calC_{g,\delta,F}\}
	\quad\textup{and}\quad
	\mathbb{D}_{g,\calF} := \bigcup_{\delta \ge 1} \mathbb{D}_{g,\delta,\calF}\;.
\]
If \(\calF = \{F\}\) then we write \(\mathbb{D}_{g,\delta,F}\) instead of \(\mathbb{D}_{g,\delta,\{F\}}\) and similarly for $\mathbb{D}_{g,\delta,F}$. Then $\mathbb{D}_{g,F}$ is the set of all degree sets of smooth curves of genus \(g\) over a the field $F$.
\begin{remarks}\label{rmks}\hfill
	\begin{enumerate}
		\item If \(\calD\in \mathbb{D}_{g, \calF}\), then \(\calD\) is the degree set of a curve whose index equals \(\gcd(\calD)\), so the union $\mathbb{D}_{g,\calF} = \bigcup \mathbb{D}_{g,\delta,\calF}$ is disjoint and this partition can be recovered from the elements.
		\item The sets $\mathbb{D}_{g,\delta,\calF}$ can only be nonempty when $\delta$ divides $2g-2$, the degree of the canonical divisor. If \(g\ne 1\), then \(2g-2\neq 0\), and so (for this fixed \(g\neq 1\))
		\(\mathbb{D}_{g, \calF}\) is finite if and only if \(\mathbb{D}_{g,\delta,\calF}\) is finite for all \(\delta\).  
		\item If \(g = 1\) there is no absolute bound on the index. For example, $\mathbb{D}_{1,\delta,\Q_p} \ne \emptyset$ for all $\delta \ge 1$~\cites{Clark, Sharif-Local} and so $\mathbb{D}_{1,\Q_p}$ is infinite.
 	\end{enumerate}
\end{remarks}

The following lemma shows that \(\mathbb{D}_{g, \delta, \{\textup{finite fields}\}}\) is finite, for any $g,\delta$.

\begin{lemma}\label{lem:finitefields}
	Fix a nonnegative integer \(g\).  There exists a positive integer \(r = r(g)\) such that for any finite field \(F\) and any smooth curve \(C/F\) of genus \(g\), the degree set \(\calD(C/F)\) contains \(\NN_{\geq r}\). Furthermore, if \(g=2\), then \( \bigcup_{d \in \calD(C/F)} d\NN \in \{ \NN, \NN_{>1} \} \,.\)
\end{lemma}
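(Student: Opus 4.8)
The plan is to deduce both assertions from the Lang--Weil bounds, which control the number of $\F_q$-points on a geometrically integral curve over $\F_q$ in terms of $q$ and the genus. First I would reduce to counting points of each degree. For a smooth curve $C/F$ with $F = \F_q$, an effective divisor of degree $n$ that is prime (i.e.\ a closed point) corresponds, after base change to $\overline{\F}_q$, to a Galois orbit of size $n$ on $C(\overline{\F}_q) = C(\F_{q^n})$. So $n \in \calD(C/F)$ if and only if there is a point of $C(\F_{q^n})$ not lying in $C(\F_{q^m})$ for any proper divisor $m \mid n$. By inclusion--exclusion, the number of such points is $\sum_{m \mid n} \mu(n/m)\, \#C(\F_{q^m})$, and it suffices to show this is positive once $n \ge r(g)$, uniformly in $q$.

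The key input is the Weil bound $\bigl|\#C(\F_{q^m}) - (q^m + 1)\bigr| \le 2g\, q^{m/2}$, valid for the smooth proper geometrically integral curve $C$ (genus $g$). Plugging this into the Möbius sum, the ``main term'' $\sum_{m\mid n}\mu(n/m)(q^m+1)$ is $\ge q^n - q^{n/2}\cdot(\text{number of divisors }m < n) - 2 \ge q^n - n q^{n/2}/2 \cdots$, while the error is bounded by $\sum_{m \mid n} 2g\, q^{m/2} \le 2g \cdot n \cdot q^{n/2}$ (crudely, since the largest proper divisor is at most $n/2$). Hence the count of degree-$n$ points is at least $q^n - C(g)\, n\, q^{n/2}$ for an explicit constant $C(g)$ depending only on $g$; since $q \ge 2$, this is positive as soon as $q^{n/2} > C(g) n$, which holds for all $n$ larger than some $r(g)$ independent of $q$. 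This gives $\NN_{\ge r} \subset \calD(C/F)$. The main obstacle, such as it is, is purely bookkeeping: getting a clean uniform-in-$q$ threshold $r(g)$ from the competing terms; using $q \ge 2$ throughout and the crude divisor-count bound $\tau(n) \le n$ makes this routine.

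For the final assertion when $g = 2$: the canonical divisor has degree $2g - 2 = 2$, so the index $\delta = \gcd(\calD(C/F))$ divides $2$, hence $\delta \in \{1,2\}$. By the first part, $\calD(C/F) \supset \NN_{\ge r}$, so $\bigcup_{d \in \calD(C/F)} d\NN$ is a union of the form $\delta\NN$ minus possibly finitely many elements; but since it is closed under taking multiples and contains a full tail $\NN_{\ge r}$, it equals $\{n : n \ge 1,\ \delta \mid n,\ n \notin \text{(finite exceptional set closed upward under multiples within }\delta\NN)\}$. If $\delta = 1$ and $1 \in \calD(C/F)$ then it is all of $\NN$; if $\delta = 1$ but $1 \notin \calD(C/F)$, then since any $n \ge 2$ with $n \in \calD$ forces $n\NN \subset \bigcup d\NN$ and the $\gcd$ being $1$ forces $\bigcup d\NN \supset \NN_{>1}$ (two coprime elements $\ge 2$ generate all of $\NN_{>1}$ as a semigroup together with the tail), giving $\NN_{>1}$; one checks $1$ cannot then be added. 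If $\delta = 2$, every closed point has even degree, and conversely the first part gives all sufficiently large even integers, and closure under multiples then forces $\bigcup_{d \in \calD(C/F)} d\NN = 2\NN$. Wait --- I should double check: is $2\NN$ attainable but $\NN_{>1}$ the only other option, i.e.\ can $\bigcup d\NN$ omit some small even number? It cannot: if $d \in \calD(C/F)$ is even then $d \mid$ some large even integers already present, but more simply, if $\delta = 2$ then $2 \mid d$ for all $d\in\calD$, and we must show $2\NN \subseteq \bigcup d\NN$. Hmm, this requires that $2 \in \bigcup_{d} d\NN$, i.e.\ that some $d \in \calD(C/F)$ divides $2$, i.e.\ that $\calD(C/F)$ actually contains $2$ (not just even numbers $\ge r$). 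This does hold: a genus $2$ curve is hyperelliptic, so the canonical map exhibits a degree-$2$ map to $\PP^1$, and pulling back an $\F_q$-rational point of $\PP^1$ gives a closed point of degree $1$ or $2$ on $C$; if no degree-$1$ point exists (which is the case when $\delta = 2$) every such fiber is a degree-$2$ point, so $2 \in \calD(C/F)$. Therefore $2\NN \subseteq \bigcup_{d\in\calD(C/F)} d\NN \subseteq 2\NN$, i.e.\ equality. Combining the cases yields $\bigcup_{d \in \calD(C/F)} d\NN \in \{\NN, \NN_{>1}\}$ as claimed; and one records that $\delta = 2$ cannot occur for $g = 2$ over a finite field precisely because of this hyperelliptic argument --- actually it can occur only if $C$ has no rational point, which is possible, so both $\NN$ (when $\delta=1$) and ... hmm, when $\delta = 2$ we got $2\NN$, contradicting the claimed dichotomy $\{\NN,\NN_{>1}\}$. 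Let me reconsider: over a \emph{finite} field, a genus $2$ curve always has a rational \emph{divisor} class of degree $1$ (the canonical class has degree $2$, and by Lang's theorem / the fact that $\Pic^1$ is a torsor under an abelian variety over a finite field it has a rational point), but does it have a degree-$1$ \emph{closed point}? Not necessarily. However, the first part shows $\calD(C/F) \supseteq \NN_{\ge r}$ contains both $r$ and $r+1$, hence contains two consecutive integers, so $\gcd = 1$ always; thus $\delta = 1$ always for finite fields, and only the cases $\calD \ni 1$ (giving $\NN$) and $\calD \not\ni 1$ (giving $\NN_{>1}$) arise. This is the correct resolution, and it is exactly why the statement is restricted to the dichotomy $\{\NN, \NN_{>1}\}$.
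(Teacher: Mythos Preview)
Your argument for the first assertion is essentially the paper's: both use the Weil bound to show that a curve of genus $g$ over $\F_q$ has a closed point of degree $d$ once $d$ is large enough in terms of $g$ alone, uniformly in $q$.

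The gap is in the $g=2$ case. Your final resolution is that $\calD(C/F) \supseteq \NN_{\ge r}$ forces $\gcd(\calD)=1$, and then ``$1 \notin \calD$ gives $\NN_{>1}$.'' But $\gcd(\calD)=1$ does \emph{not} imply $\bigcup_{d\in\calD} d\NN = \NN_{>1}$: for a prime $p$ to lie in $\bigcup_{d\in\calD} d\NN$ with $1\notin\calD$, you need $p\in\calD$ itself. Your first part only supplies primes $\ge r(2)$, and your parenthetical ``two coprime elements $\ge 2$ generate all of $\NN_{>1}$ as a semigroup together with the tail'' conflates the union of sets of multiples with a numerical semigroup; the set $\bigcup_d d\NN$ is not closed under addition, and e.g.\ $4\NN\cup 5\NN \cup \NN_{\ge r}$ omits $2$ and $3$. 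So you must separately show that every prime $p < r(2)$ lies in $\calD(C/F)$ when $C(\F_q)=\emptyset$.

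The paper does exactly this. You already have the ingredient for $p=2$ (the hyperelliptic map), though you dropped it in your final paragraph. For $p=3$ the paper uses Riemann--Roch: a degree-$3$ divisor is linearly equivalent to an effective one, and if $C(\F_q)=\emptyset$ that effective divisor is forced to be a single closed point. For primes $p>3$ the paper sharpens the Weil estimate: since $q\ge 2$ and $p\ge 4$ one has $q^p\ge 16$, and then $q^p+1-4q^{p/2}>0$ gives $C(\F_{q^p})\ne\emptyset$, hence (as $p$ is prime and $C(\F_q)=\emptyset$) a degree-$p$ point. With every prime in $\calD$, the conclusion $\bigcup_d d\NN = \NN_{>1}$ follows.
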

\begin{proof}
	Let \(C\) be a smooth projective curve over the finite field \(\F_q\) with $q$ elements. The set \(C(\F_{q^d}) \setminus \cup_{r|d, r\ne d} C(\F_{q^r})\) is nonempty whenever \(\#C(\F_{q^d}) > \sum_{p|d}\#C(\F_{q^{d/p}})\), where the sum ranges over prime divisors.
	By the Weil conjectures, this holds whenever
	\[
		q^d - \sum_{p|d\textup{ prime}}q^{d/p} - 2g\left(q^{d/2} + \sum_{p|d\textup{ prime}}q^{d/2p}\right) + 1 - \omega(d)>0\,,
	\]
	where $\omega(d)$ denotes the number of prime divisors of $d$. If this inequality is not satisfied then
		\[
		2g \ge 
		\frac{q^d - \sum_{p}q^{d/p} - (\omega(d)-1)}{q^{d/2} + \sum_{p}q^{d/2p}} \ge 
		\frac{q^d - \omega(d)q^{d/2} - (\omega(d)-1)}{q^{d/2}(\omega(d)+1)} \ge \frac{q^{d/2}}{\omega(d)+1}\, - 2 \ge \frac{2^{d/2}}{d}\, - 2
	\]
	One checks that $2^{d/2}/d > d-7$, so \( d \in \calD(C/\F_q)\) for all $d \ge 2g+9$.
	
	Now suppose $g = 2$. If $C(\F_q) \ne \emptyset$, then $\bigcup_{d \in \calD(C/\F_q)} d\NN = \NN$. So suppose $C(\F_q) = \emptyset$. Then $2 \in \calD(C/\F_q)$ because $C$ admits a degree $2$ map to $\PP^1$ and the preimage of any point in $\PP^1(\F_q)$ must be a degree $2$ point on $C$. By Riemann-Roch, $C$ admits an effective divisor of degree $3$. Since $C(\F_q) = \emptyset$, any such divisor must be irreducible. Thus, $3 \in \calD(C/\F_q)$. If $d > 3$, then $\#\F_{q^d} \ge 16$ in which case the Weil conjectures show that $C(\F_{q^d}) \ne \emptyset$. It follows that $\calD(C/\F_q)$ contains every prime integer and so $\bigcup_{d \in \calD(C/\F_q)} d\NN = \NN_{>1}$.
\end{proof}

\begin{remark}
	In the previous lemma we give all possibilities for the set $\bigcup_{d \in \calD(C/\F_q)} d\NN$ when $C$ is a genus $2$ curve over a finite field, because this is the relevant set for Theorem~\ref{thm:g2}, which we will prove in the following section. This does not give the full list of possibilities for the degree set of a genus $2$ curve over a finite field. For example, there are genus $2$ curves over small finite fields $\F_q$ such that $\calD(C/\F_q)$ contains $1$ but not $2$ (i.e., such that $C(\F_q) = C(\F_{q^2}) \ne \emptyset$)~\cite{LMFDB}*{\href{https://www.lmfdb.org/Variety/Abelian/Fq/2/2/b_a}{2.2.b\_a}}.
\end{remark}

\begin{thm}\label{thm:FinitenessbbD}
	Let \(\calF\) be a set of Henselian fields, let \(\calF_0\) be the set of residue fields arising from \(\calF\), and let \(g\geq0\) be an integer.  Assume that \(\calF_0\) is closed under taking finite extensions and that all fields in $\calF_0$ are perfect.  If \(\mathbb{D}_{g_0,\delta_0,\calF_0}\) is finite for all \(0\leq g_0\leq g\) and all \(\delta_0\geq 1\), then \(\mathbb{D}_{g,\delta,\calF}\) is finite.
\end{thm}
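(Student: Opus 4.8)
The plan is to run Theorem~\ref{thm:MainThm} together with the Artin--Winters bound on the combinatorics of minimal models of curves. Fix a curve $C \in \calC_{g,\delta,K}$ with $K \in \calF$, let $R$ be the Henselian DVR with $\Frac(R) = K$ and residue field $k \in \calF_0$, and let $X \to S = \Spec(R)$ be a proper regular model of $C$ with $X_s$ strict normal crossings, obtained from the minimal regular model by the (boundedly many) blow-ups needed to achieve SNC (\cite{Liu}*{Proposition 10.1.8}). By \cite{AW}*{1.6} and \cite{Liu}*{10.1.57} there is a constant $B = B(g)$, depending only on $g$, that simultaneously bounds the number of geometric irreducible components of $X_s$, each multiplicity $m_i$, and each transversal intersection number $E_i\cdot E_j$; only boundedly many blow-ups (each controlled by the configuration it resolves) are needed to reach an SNC model, so these bounds survive. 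For each irreducible component $E_i$ let $k_i$ be its field of constants; then $[k_i:k] \le B$, and since $\calF_0$ is closed under finite extensions with all members perfect, $k_i \in \calF_0$ and $E_i$ is a smooth, proper, geometrically integral curve over $k_i$, of some genus $g_i \le g$ (e.g.\ because $\sum_i[k_i:k]g_i$ is the abelian rank of the special fibre of the N\'eron model of $\Jac(C)$, hence at most $\dim\Jac(C)=g$).

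By Theorem~\ref{thm:MainThm}, splitting the union over closed points of $X_s$ according to whether a point lies on one or on two components,
\[
	\calD(C/K) \;=\; \bigcup_i m_i\Big(\bigcup_{d \in \calD(E_i^0/k)} d\NN\Big) \;\cup\; \bigcup_{i<j}\;\bigcup_{x \in E_i \cap E_j} \deg_k(x)\,\langle m_i,m_j\rangle,
\]
where $E_i^0 = E_i \setminus \bigcup_{j\ne i}(E_i\cap E_j)$ and $\calN(x) = m_i\NN$ for $x \in E_i^0$. Each ``node'' term is harmless: $E_i\cap E_j$ is a finite $k$-scheme of degree $\le B$, so the multiset of its point degrees, and hence the corresponding term, takes one of finitely many values. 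For a ``component'' term, write $\calD_i^\flat := \bigcup_{d\in\calD(E_i^0/k)}d\NN$; since $\deg_k(x) = [k_i:k]\,[\kk(x):k_i]$ we have $\calD(E_i^0/k) = [k_i:k]\cdot\calD(E_i^0/k_i)$, and $E_i^0$ is $E_i$ with finitely many closed points of total $k_i$-degree $\le B$ deleted, so $\calD(E_i^0/k_i) = \calD(E_i/k_i)\setminus S_i$ for some $S_i \subseteq \{1,\dots,B\}$. Thus $m_i\calD_i^\flat$ is determined by the bounded data $(m_i,[k_i:k],S_i)$ together with the set $\calD(E_i/k_i)$, and it remains only to confine $\calD(E_i/k_i)$ to a finite set.

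Here the hypothesis on $\mathbb{D}_{g_0,\delta_0,\calF_0}$ reduces us to bounding the pair $(g_i,\delta(E_i/k_i))$ in terms of $g$ and $\delta$. We have $g_i\le g$, and $\delta(E_i/k_i)\mid 2g_i-2$ whenever $g_i\ne 1$, which settles those components (and gives $\delta(E_i/k_i)\le 2$ when $g_i=0$). For a component with $g_i=1$ the plan is to use the relative dualizing sheaf: $\omega_{X/S}\cong\omega_X$ is invertible since $X,S$ are regular, adjunction gives $\deg_{k_i}(\omega_{X/S}|_{E_i}) = (2g_i-2) - E_i^2/[k_i:k]$, and since $X_s$ is principal we have $X_s\cdot E_i=0$, so this degree equals $\tfrac{1}{m_i[k_i:k]}\sum_{j\ne i}m_j(E_i\cdot E_j)\ge 0$ and is bounded in terms of $g$ by the Artin--Winters bounds. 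As $\delta(E_i/k_i)$ divides the degree over $k_i$ of any line bundle on $E_i$ (the image of $\deg\colon\Pic(E_i)\to\Z$ is exactly $\delta(E_i/k_i)\Z$), this bounds $\delta(E_i/k_i)$ unless $\deg_{k_i}(\omega_{X/S}|_{E_i})=0$; in that case $E_i^2=0$, forcing $X_s=m_iE_i$, whence a computation of $p_a(m_iE_i)$ forces $g=1$ and Corollary~\ref{cor:index} gives $m_i\delta(E_i/k)=\delta(C/K)=\delta$, so $\delta(E_i/k_i)\le\delta$. In all cases $(g_i,\delta(E_i/k_i))$ ranges over a finite set, so $\calD(E_i/k_i)$ lies in the finite set $\bigcup_{g_0\le g}\bigcup_{\delta_0\le\max(2g-2,\delta,2)}\mathbb{D}_{g_0,\delta_0,\calF_0}$; hence $\calD(C/K)$ is a union of at most $B+\binom{B}{2}$ terms each drawn from a fixed finite family of subsets of $\NN$ depending only on $(g,\delta)$, and $\mathbb{D}_{g,\delta,\calF}$ is finite.

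I expect the main obstacle to be exactly this last step: controlling the index of a positive-genus component, for which the clean statement is that $\delta(E_i/k_i)$ divides the bounded integer $\deg_{k_i}(\omega_{X/S}|_{E_i})$, with the degenerate case $X_s=m_1E_1$ peeled off via Corollary~\ref{cor:index}. A secondary nuisance is the bookkeeping for components that are not geometrically connected, handled by passing to the field of constants and tracking $\delta(E_i/k) = [k_i:k]\,\delta(E_i/k_i)$; with these inputs the Artin--Winters bounds and the decomposition above make the rest routine.
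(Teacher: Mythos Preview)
Your overall architecture is right, but there is a genuine gap at the point where you invoke Artin--Winters. You assert that there is a constant $B(g)$ bounding \emph{the number of geometric irreducible components} of $X_s$, and you use this to get $[k_i:k]\le B$ and to count the terms in your union. This is false: already for $g=2$ there are Namikawa--Ueno types with arbitrarily long chains of rational $(-2)$-curves, so the number of components is unbounded. What Artin--Winters actually bounds (for $g\ge 2$) is the multiplicities $m_i$ and the size of a ``core'' subgraph $\Gamma$ of the dual graph, with $\Gamma^c$ a disjoint union of paths. Since paths have automorphism group of order $\le 2$, this bounds $|\Aut(\text{dual graph})|$, hence the size of every Galois orbit of vertices and edges; that is what bounds $[k_i:k]$ and $\deg_k(x)$ for $x$ singular on $(X_s)_{\red}$. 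This is exactly the content of the paper's Proposition~4.6, and it requires a real argument: one must also check that the boundedly many blow-ups needed to reach SNC do not destroy these bounds. Once you know each ``node term'' and each ``component term'' in your decomposition lies in a fixed finite family, the unboundedness of the number of terms is harmless (a union of sets from a finite family is again in a finite family), so your final counting step is unnecessary.

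Two further remarks. First, you should dispose of $g\le 1$ at the outset (as the paper does, via conics and Riemann--Roch), since the Artin--Winters structure theorem you cite is stated for $g\ge 2$. Second, your dualizing-sheaf argument for bounding $\delta(E_i/k_i)$ when $g_i=1$ is correct in spirit and would work once the surrounding bounds are repaired, but the paper takes a shorter route: in the reducible case every component $E_i$ contains a singular point of $(X_s)_{\red}$, whose degree is bounded by Proposition~4.6, and this point directly witnesses $\delta(E_i/k)\le B$ with no case analysis on $g_i$ and no adjunction computation.
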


The proof of Theorem~\ref{thm:FinitenessbbD} is given below. First we use this to prove Theorem~\ref{thm:bounded} of the introduction.

\begin{proof}[Proof of Theorem~\ref{thm:bounded}]
	The set \(\mathbb{D}_{g_0, \delta_0, \{\textup{finite fields}\}}\) is finite by Lemma~\ref{lem:finitefields} and \(\mathbb{D}_{g_0, \delta_0, \{\textup{Hilbertian fields}\}}\) is finite by \cite[Prop. 7.5]{GLL}.
	For any algebraically closed $k$ we have \(\mathbb{D}_{g_0, k} = \{ \{ 1\} \}\)
	which is finite. So Theorem~\ref{thm:FinitenessbbD} implies that $\calD_{g,\delta,K}$ is finite for any Henselian field $K$ with finite, Hilbertian or algebraically closed residue field. For $g \ge 2$, this implies that $\mathbb{D}_{g,K}$ is finite by Remarks~\ref{rmks}.
\end{proof}

\begin{rmk}\label{rmk:iterated}
Theorem~\ref{thm:FinitenessbbD} can be applied inductively to prove that $\mathbb{D}_{g,K}$ is finite for $g \ge 2$ and $K$ an iterated Laurent series field $F((t_1))\cdots((t_\ell))$ over a field $F$ such as $\Q_p$ or $\Q(x_1,\dots,x_n)$.
\end{rmk}

\begin{proof}[Proof of Theorem~\ref{thm:FinitenessbbD}]
	Since any genus \(0\) curve can be embedded as a smooth conic, we have that \(\mathbb{D}_{0, \calF} \subset \{\NN, 2\NN\}\).  Further, the Riemann-Roch Theorem shows that if \(\calC_{1,\delta, K}\neq\emptyset\) then \(\mathbb{D}_{1,\delta,\calF}=\{\delta\NN\}\).
	Thus, we may assume that \(g\geq 2\).

	Let \(K\in \calF\), let \(R\) be the associated discrete valuation ring and let \(k\) denote its residue field. Let \(C\in \calC_{g,\delta, K}\). By~\cite{Liu}*{Proposition 9.3.36(b)}, there is a minimal SNC-model \(X \to \Spec(R)\) of \(C/K\). If $X_s$ is irreducible, then Theorem~\ref{thm:MainThm} implies that \(\calD(C/K) = \cup_{d\in \calD((X_s)_{\red}/k)}md\NN\).  By assumption, there are finitely many possibilities for \(\calD((X_s)_{\red}/k)\) and (noting that $X$ must be the minimal regular model of $C$ in this case)~\cite[1.6]{AW} shows that $m(E)$ is bounded by a constant depending only on the genus.  Thus, there are finitely many possibilities for \(\calD(C/K)\). Henceforth we assume $X_s$ is {reducible.}
	
	Let \(E_1, \ldots, E_n\) denote the irreducible components of \(X_s\), and let \(m_1, \dots, m_n\) denote their multiplicities in \(X_s\). For each $i = 1,\dots, n$, define $E_i^{\circ} := E_i - \cup_{j \neq i} (E_j \cap E_i)$ and $k_i :=\kk(E_i)\cap \kbar$. 
	
	Note that the number of possibilities for $\calN(x)$ with $x \in X_s$ can be bounded in terms of $M := \max(m_i)$. Let $B := \max\{\deg_k(x) \,:\, \textup{$x$ is a singular point on $(X_s)_{\red}$}\}$. Since $X_s$ is connected {and reducible}, every component contains a singular point of $(X_s)_{\red}$. Thus the degrees $[k_i:k]$ and the indices $\delta(E_i/k)$ are bounded by $B$.
	
	Given a subset $D \subset \NN_{\ge 1}$, the number of subsets of $D$ containing $D \cap \NN_{\ge B}$ can be bounded depending only on $D$ and $B$, as can the number of subsets of $D$ contained in $D \cap \NN_{\le B}$. For a given $i \in \{1,\dots,n\}$ the sets $\{ \deg_k(x) \;:\; x \in E_i^\circ \}$ and $\{ \deg_k(x) \;:\; x \in E_i \cap E_j \,, i \ne j\,\}$ determine such subsets of $D = \calD(E_i/k)$. It follows that, given $\calD(E_i/k)$, the number of possibilities for 
	\[
		\bigcup_{x\in E_i}\deg_k(x)\calN(x) =\bigcup_{x \in E_i^\circ} \deg_k(x)m_i\NN  \quad \cup \quad \bigcup_{i\ne j}\bigcup_{x \in E_i \cap E_j} \deg_k(x)(m_i\NN + m_j\NN)\,
	\]
	can be bounded in terms of $M$ and $B$. 
	
	If $Z_i/k_i$ is an irreducible component of $(E_i)_{k_i}$, then $\calD(E_i/k) = [k_i:k]\calD(Z_i/k_i)$. We have $\delta(Z_i/k_i) \le \delta(E_i/k) \le B$. Since the special fiber \(X_s\) is connected, the arithmetic genus of any geometric irreducible component of \(X_s\) is bounded by \(g\). The assumptions of the theorem therefore imply that the number of possibilities for $\calD(Z_i/k_i)$ can be bounded in terms of $g$. So the number of possibilities for $\calD(E_i/k) = [k_i:k]\calD(Z_i/k_i)$ can be bounded in terms of $g$ and $B$. 
	
	Putting all of this together (and applying Theorem~\ref{thm:MainThm}) we see that the number of possibilities for
	\[
		\calD(C/K) = \bigcup_{x \in X_s}\deg_k(x)\calN(x) = \bigcup_{i=1}^n\bigcup_{x\in E_i}\deg_k(x)\calN(x) 
	\]
	can be bounded in terms of $g$, $M$ and $B$. Therefore, finiteness of $\mathbb{D}_{g,\delta,\calF}$ follows from Proposition~\ref{prop:bounding} below.
\end{proof}

\begin{prop}\label{prop:bounding}
	Let $g \ge 2$ be an integer. There exists a constant $c(g)$ depending only on $g$ such that, for every smooth projective and geometrically irreducible curve $C/K$ over the field of fractions of a Henselian discrete valuation ring $R$ with perfect residue field $k$, there exists a regular scheme $X$ with a proper flat morphism $X \to \Spec(R)$ such that
	\begin{enumerate}
		\item the generic fiber of $X\to S:= \Spec(R)$ is isomorphic to $C/K$,
		\item the special fiber $X_s$ of $X\to S$ is a strict normal crossings divisor on $X$,
		\item the multiplicities of the irreducible components of $X_s$ are bounded by $c(g)$, and
		\item for every closed point $x\in X_s$ that is not regular on $(X_s)_{\red}$, we have $\deg_k(x) \le c(g)$.
	\end{enumerate}
\end{prop}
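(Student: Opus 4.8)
The plan is to build the required model from the minimal regular model of $C$ by resolving its special fiber to strict normal crossings, while tracking how the numerical invariants grow. Since $g\ge 2$, the curve $C/K$ has a minimal regular model $X_{\min}\to S$. By the Artin--Winters theorem \cite{AW}*{1.6} (see also \cite{Liu}*{10.1.57}), the numerical type of the special fiber $(X_{\min})_s$ lies in a finite set depending only on $g$; in particular there is a constant $c_0(g)$ bounding the number $n$ of irreducible components $E_i$ of $(X_{\min})_s$, their multiplicities $m_i$, their arithmetic genera $p_a(E_i)$, and all intersection numbers $E_i\cdot E_j$ and $|E_i^2|$.

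First I would record what it suffices to prove about an arbitrary SNC model, so as to reduce conclusion (4) to a statement about multiplicities and self-intersections. Suppose $X\to S$ is regular, proper and flat with generic fiber $C$ and with $X_s$ a strict normal crossings divisor. As $\dim X=2$ and each component of $X_s$ is regular, a closed point $x\in X_s$ is non-regular on $(X_s)_{\red}$ precisely when it lies on two distinct components $E_i,E_j$, which then meet transversally at $x$; hence $\deg_k(x)=[\kk(x):k]\le E_i\cdot E_j$. Because $X_s$ is a fiber, $E_i\cdot X_s=0$, so $m_iE_i^2+\sum_{j\ne i}m_j(E_i\cdot E_j)=0$, and since each $m_j\ge1$ this gives $E_i\cdot E_j\le m_i|E_i^2|$. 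Thus it is enough to exhibit an SNC model $X$ whose component multiplicities and self-intersections are bounded in terms of $g$: conclusion (3) is then immediate, conclusion (4) holds with $c(g)$ the square of that bound, and (1), (2) hold by construction.

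Next I would obtain such a model by resolving the reduced special fiber of $X_{\min}$. Embedded resolution of curves on a regular surface gives a finite chain of blow-ups of closed points $X=X_N\to\cdots\to X_0=X_{\min}$ with each $X_i$ regular and $(X_N)_s$ a strict normal crossings divisor (and $X_N$ still has generic fiber $C$, since we only blow up points of the special fiber). The point I would argue is that both $N$ and the growth of the numerical invariants along this chain are bounded in terms of $c_0(g)$: the local complexity at each point where $((X_{\min})_s)_{\red}$ fails to be SNC is controlled by the bounded global data, because the total $\delta$-invariant of the singularities of a component $E_i$ is at most $p_a(E_i)$, the local intersection multiplicity of two components at a point is at most $E_i\cdot E_j\le m_i|E_i^2|$, and the number of components through a point is at most $n$. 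Standard bounds for embedded resolution of plane curves then give $N\le c_1(g)$; and at the blow-up of a point $x$ the exceptional component has multiplicity $\sum_{x\in E_i}m_i\,e(E_i,x)$ by \cite{GLL}*{Proposition 8.3(1)}, which is bounded since the $m_i$, the Hilbert--Samuel multiplicities $e(E_i,x)$ and $n$ are bounded, while each self-intersection drops by at most the square of a bounded multiplicity. Iterating over the $\le c_1(g)$ blow-ups, all multiplicities and all $|E_i^2|$ of $X_N$ are bounded by some $c_2(g)$, and taking $X=X_N$ and $c(g)=c_2(g)^2$ completes the proof.

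The hard part will be making the uniform embedded-resolution bound of the third paragraph precise: one must bound the local invariants of the non-SNC points of $((X_{\min})_s)_{\red}$ — orders of tangency between components, singularity types of components, multiple incidences of components at a point — purely in terms of the Artin--Winters type, and feed these into a termination statement for embedded resolution carrying an explicit bound on the number of blow-ups. Alternatively, if one can cite that the \emph{minimal} SNC model of a genus-$g$ curve over a Henselian discrete valuation ring already has numerical type in a finite set depending only on $g$ (which should follow from Artin--Winters for $X_{\min}$ together with the controlled nature of the passage from $X_{\min}$ to the minimal SNC model), the argument collapses to the second paragraph applied to the minimal SNC model \cite{Liu}*{Proposition 9.3.36(b)}.
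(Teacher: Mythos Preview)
Your reduction in the second paragraph—bounding $\deg_k(x)\le E_i\cdot E_j\le m_i|E_i^2|$ at a node $x$ of an SNC special fiber—is correct and gives a clean alternative to the paper's route for conclusion (4), which instead bounds such degrees by the size of the automorphism group of the geometric dual graph (singular points correspond to Galois orbits of edges).

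There is, however, a genuine gap stemming from the first paragraph. Artin--Winters does \emph{not} bound the number $n$ of irreducible components of $(X_{\min})_s$ in terms of $g$: already for $g=2$ the Namikawa--Ueno list contains types such as $[\textup{I}_n-\textup{I}_m-\ell]$ and $[\textup{II}^*-\textup{I}_n-m]$ with chains of $(-2)$-curves of arbitrary length. (The other quantities you list—multiplicities, $p_a(E_i)$, $E_i\cdot E_j$, $|E_i^2|$—are indeed bounded, via adjunction and $\sum m_i\,K\cdot E_i=2g-2$.) What \cite{AW}*{1.6} actually provides, and what the paper uses, is a decomposition of the dual graph into a bounded subgraph $\Gamma$ and a complement $\Gamma^c$ consisting of chains along which $X_s$ already has normal crossings. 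Your third paragraph leans on the bound on $n$ in two places. The first—bounding the number of components through a given point—is recoverable without $n$, since that number is at most $1+\sum_{j\ne i}E_i\cdot E_j\le 1+m_i|E_i^2|$ for any $E_i$ through the point. The second is not: to get $N\le c_1(g)$ you must bound the total number of non-SNC points of $(X_{\min})_s$, and with $n$ unbounded this requires knowing that all non-SNC behavior (singular components, tangencies, triple points) is confined to the bounded subgraph $\Gamma$. That is precisely the content of the paper's argument via the quantities $\calI,\calE,\calG$. Your closing alternative—citing bounded numerical data for the minimal SNC model directly—is morally correct, but establishing it amounts to carrying out this $\Gamma/\Gamma^c$ analysis, so it cannot simply be quoted.
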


\begin{proof}
	Let $X$ be a regular scheme with a proper flat morphism $X \to S$ of relative dimension $1$. Consider the dual graph of the geometric special fiber of $X$. This is the graph whose vertices correspond to the geometric irreducible components (and are labelled with the multiplicity of the corresponding component) and with edges between two given vertices corresponding to the intersection points of the corresponding geometric components (and are labelled with the corresponding intersection multiplicity).

To begin, let us assume $k$ is algebraically closed and consider the effect on the dual graph of blowing up $X$ at a closed point $x$ on its special fiber. {Let $e(E_i,x)$ denote the Hilbert-Samuel multiplicity of $x$ on $E_i$}. The dual graph {of the special fiber of the blow up} will contain one additional vertex, corresponding to the exceptional divisor. The sum of the multiplicities of the edges incident to this new vertex is $\sum e(E_i,x)$, and the multiplicity is the sum $\sum m_i(E_i)e(E_i,x)$, where both sums range over the irreducible components. The multiplicities of the initial vertices are unchanged. For any two vertices both incident to the new vertex, the sum of the multiplicities of the edges connecting them (which is the intersection multiplicity of the corresponding components) must decrease (because we have blown up at a point of intersection of the corresponding components). All edges between two vertices of the initial graph not both incident to the new vertex remain unchanged and no other edges are introduced. 

As we now explain, there is a sequence of blow ups at geometric points whose length (i.e., the number of blow ups in the sequence) is bounded in terms of the sums
	\[
		\calI := \sum_{\substack{i\ne j\\E_i \cdot E_j > 1}} (E_i \cdot E_j)\,, 
		\quad
		\calE := \sum_{i}\sum_{\substack{x \in E_i\\ e(E_i,x) > 1}}e(E_i,x)\,,
		\quad \text{ and } \quad
		\calG := \sum_{\substack{i\\p_a(E_i)>0}}p_a(E_i)\,,
	\]
which results in a model for which all components of the special fiber are regular and their pairwise intersection numbers are at most $1$.
By \cite{Liu}*{Proof of Proposition 8.1.26}, if $x$ is a singular point on some irreducible component $E_i$, then $e(E_i,x) > 1$ and there is a sequence of {at most $p_a(E_i)$} 
blow ups for which all points in the fiber above $x$ are regular points of the components they lie on. Thus after at most {$\sum_i\sum_{x, e(E_i,x)>1}p_a(E_i) \leq \calE\cdot\calG$} blow ups we obtain a model with all components regular (so $e(E_i,x) = 1$ for all $i$ and $x$). In the resulting model the corresponding sum \(\calG'\) is bounded by the original sum \(\calG\).  Although the corresponding sum $\calI'$ of intersection numbers greater than $1$ may be larger than $\calI$, the discussion in the previous paragraph implies that
we can bound $\calI'$ in terms of the initial $\calI$ and $\calE$. For any pair of intersecting components, blowing up at a point of intersection will reduce the intersection multiplicity of their strict transforms. So after a further $\calI'$ blow ups we can obtain a model such that all intersection numbers are at most $1$.

Now consider the situation when $k$ is not assumed to be algebraically closed. There is an action of the absolute Galois group $\Gal(\kbar/k)$ on the dual graph induced by its action on the geometric special fiber $X_s \times_k \kbar$. The effect on the dual graph of blowing up $X$ at a closed point of the special fiber is the same as that of successively blowing up all points in the $\Gal(\kbar/k)$-orbit of a geometric point in its support. Note that in the preceding paragraph we are blowing up at a Galois stable set of points at every step (e.g., the set of geometric points that are not regular points on some irreducible component), so in fact we can obtain a sequence of blow ups of $X$ at closed points of $X_s$ resulting in a model where the geometric components are regular and have pairwise intersection numbers at most $1$. The total number of blow ups (weighted by degree of the centers) is bounded in terms of the data $\calI$, $\calE$, {and $\calG$} of the dual graph of the initial special fiber.

Now suppose $X \to S$ is the minimal regular model of $C$ over $R$. By~\cite{AW}*{Theorem 1.6} (see also~\cite{Liu}*{Prop. 10.1.57}) there is a constant $c_0(g)$ depending only on $g$ bounding the multiplicities $m(E_i)$. Moreover, op. cit. shows that there is a subgraph \(\Gamma\) of the dual graph of $X_s$ whose number of vertices and edges is likewise bounded by $c_0(g)$, {and such that each connected component in the complement $\Gamma^c$ is a path where $X_s$ has normal crossings for all points on a component corresponding to a vertex in this path}. In addition, {the number of connected components of $\Gamma^c$ is bounded by $c_0(g)$,} every vertex in \(\Gamma^c\) {corresponds to a component with $p_a = 0$ and} has at most one edge going to a vertex inside \(\Gamma\).  {In particular, the constant $c_0(g)$ can be taken to also bound the sums $\calI$, $\calE$, $\calG$, and to bound the size of the automorphism group of the dual graph.}

By the discussion {at the start of the proof} there is a sequence of blow ups {of $X$, whose length is bounded in terms of $\calI, \calE, \calG$, that results }in a model $X'$ with regular irreducible components and pairwise intersections of geometric components at most $1$. {Since $\calI, \calE, \calG$ are bounded by $c_0(g)$, we} can deduce bounds in terms of $c_0(g)$ for the multiplicities of the components of $X'_s$, the size of the automorphism group of its dual graph and the size of the subgraph $\Gamma'$ consisting of all edges and vertices incident to a vertex coming from the subgraph $\Gamma$. 

The special fiber of $X'$ has normal crossings {except at those points} that lie on more than two irreducible components. Blowing up once more at each of these points results in a model $X''$ with normal crossings. Each geometric point in the support of such a point gives a complete subgraph on at least 3 vertices (corresponding to the $\ge 3$ geometric components sharing that common intersection point) which is necessarily contained in $\Gamma'$. So the number of these can be bounded. We can therefore deduce a bound depending only on $g$ for the multiplicities of the components of $X''_s$ and the size of the automorphism group of its dual graph.

Note that $X''_s$ may not yet have \emph{strict} normal crossings (as there could be distinct geometric components in the same Galois orbit which intersect, resulting in an irreducible component with self intersections). The geometric points where this occurs correspond to edges in the dual graph which connect a pair of vertices in the same Galois orbit, and there is at most one such point in any connected component of $\Gamma'^c$.
Blowing up once at each of these points results in a model with strict normal crossings. Since the number of {such points} is bounded we can deduce a bound depending only on $g$ for the multiplicities of the irreducible components of the resulting special fiber and the size of the automorphism group of the resulting dual graph. 

Finally, we note that the closed points on the special fiber of this SNC model which are singular points on the reduced special fiber correspond to $\Gal(\kbar/k)$-orbits of edges in the dual graph. The size of these orbits, and hence the degrees of such points, are bounded by the size of the automorphism group of the dual graph, for which we have a bound depending only on $g$.
\end{proof}

\subsection{Further examples of degree sets not cofinite in $\delta\NN$}\label{sec:moreexamples}
Lemma \ref{lem:finitefields} shows that $\calD(C/F)$ is a cofinite subset of $\delta(C/F)\NN$ when $F$ is finite. The same is true when $F$ is Hilbertian by \cite[Prop. 7.5]{GLL}. Example~\eqref{ex:2Ncup3N} of the introduction shows that this does not hold over Henselian fields. The following proposition gives more examples illustrating this phenomenon.

\begin{prop}\label{prop:examples}
	Let $K$ be a Henselian field with finite residue field $k$. For any set of integers $n_1,\dots,n_\ell$ there exists a smooth hyperelliptic curve $C/K$ with degree set
	\[
		\calD(C/K) = 2\NN \cup n_1 \NN \cup \cdots \cup n_\ell \NN\,.
	\]
\end{prop}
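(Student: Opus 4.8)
The plan is to realize the prescribed set as $\calD(C/K)$ for an explicit curve, reading off the answer from Theorem~\ref{thm:MainThm}. Since $K$ is Henselian, that theorem reduces the problem to producing a smooth hyperelliptic curve $C/K$ together with a regular proper flat model $X\to\Spec(R)$ whose special fiber is a strict normal crossings divisor such that: (i) some multiplicity-$2$ component of $X_s$ carries a smooth $k$-rational point lying on no other component; (ii) for each $i$, some multiplicity-$n_i$ component does likewise; and (iii) $\deg_k(x)\calN(x)\subseteq 2\NN\cup n_1\NN\cup\cdots\cup n_\ell\NN$ for every closed point $x\in X_s$. Indeed, (i) and (ii) force the right-hand side of~\eqref{eq:MainThm} to contain $2\NN$ and each $n_i\NN$, while (iii) forces the reverse containment. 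Before constructing $C$, I would dispose of degenerate cases: if some $n_i=1$ the target set is all of $\NN$, realized by $C:y^2=x^5+\pi$ (whose regular model has a reduced rational component meeting the rest of the fiber in only two points, hence carrying a smooth $k$-point $x$ with $\calN(x)=\NN$); and since $n\NN\subseteq2\NN$ whenever $n$ is even, we may delete all even $n_i$ and all repetitions, the target set being $2\NN$ when the remaining list is empty (realized by $C:y^2=\pi u(x)$ for a suitable monic even-degree $u$ with $\bar u$ separable, whose regular model consists only of transversally-meeting genus-$0$ components of multiplicity $2$). Thus we may assume $n_1<\cdots<n_\ell$ are distinct odd integers $\ge3$.

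For the main construction I would choose $c_1,\dots,c_\ell\in R$ with pairwise distinct reductions and, for each $i$, a unit $\alpha_i\in R^\times$ with $\alpha_i^2-4\in R^{\times2}$, so that $X^{2n_i}+\alpha_i\pi X^{n_i}+\pi^2=(X^{n_i}-\beta_i\pi)(X^{n_i}-\gamma_i\pi)$ with $\beta_i\gamma_i=1$, $\beta_i,\gamma_i\in R^\times$ and $\bar\beta_i\neq\bar\gamma_i$ (imposing, in addition, the extra conditions on the $\beta_i$ needed in the next step, and assuming $\Char(k)$ is prime to $2n_1\cdots n_\ell$), and set
\[
	C:\qquad y^2=\pi\prod_{i=1}^{\ell}\bigl((x-c_i)^{n_i}-\beta_i\pi\bigr)\bigl((x-c_i)^{n_i}-\gamma_i\pi\bigr).
\]
The right-hand side is squarefree of even degree $\sum_i2n_i\ge6$ with leading coefficient $\pi\notin K^{\times2}$, so $C$ is a smooth hyperelliptic curve of genus $\sum_in_i-1$ whose fiber over $x=\infty$ is a single closed point of degree $2$, and $C(K)=\emptyset$. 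For $\ell=1$ and $n_1=3$ this is, after $x\mapsto x-c_1$, precisely the curve~\eqref{ex:2Ncup3N} of the introduction, whose minimal regular model is Namikawa--Ueno Type [IV].

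Next I would analyze a regular SNC model $X$ of $C$. Away from the loci $x\equiv c_i\pmod{\pi}$ the reduction of the defining equation is $y^2=\pi\cdot(\text{unit})$, whose model is regular with a single multiplicity-$2$ component $E_0\cong\PP^1_k$ (the ``spine''); any closed $k$-point of $E_0$ off the finitely many points where $E_0$ meets the rest of $X_s$ has $\calN(x)=2\NN$, giving (i). Near $x=c_i$ the product of the other factors has, at $x=c_i$, the reduction $\bigl((c_i-c_j)^{n_j}\bigr)^2$, a nonzero square; as $\Char(k)\neq2$ it is therefore a square in $\calO_{X,x}^h$, and absorbing it into $y$ identifies the Henselized local picture there with that of $y^2=\pi\bigl(t^{2n_i}+\alpha_i\pi t^{n_i}+\pi^2\bigr)$, $t=x-c_i$. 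The technical heart of the argument is the resolution of this local model for arbitrary odd $n\ge3$, generalizing the known Type [IV] computation: one must show it produces a ``flower'' attached to $E_0$ containing a multiplicity-$n_i$ component that meets the rest of $X_s$ in finitely many points and (by the conditions imposed on $\beta_i$) carries a smooth $k$-rational point on no other component --- giving (ii) --- while every other exceptional component has multiplicity divisible by $2n_i$ or equal to $n_i+1$ (even, as $n_i$ is odd), and every closed point of the flower lies on components whose multiplicities have a common divisor in $\{2,n_i\}$, so that $\calN(x)\subseteq2\NN\cup n_i\NN$ --- giving (iii); distinct flowers meet only along $E_0$. Granting this, Theorem~\ref{thm:MainThm} yields $\calD(C/K)=\bigcup_x\deg_k(x)\calN(x)=2\NN\cup\bigcup_in_i\NN$.

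The main obstacle is exactly this resolution and its combinatorics: one either carries out the iterated blow-ups explicitly (as Namikawa--Ueno do for $n=3$), tracking each exceptional multiplicity via the formula $\sum_im_ie(E_i,x)$, or invokes a structural description of models of such hyperelliptic curves (e.g.\ via cluster pictures), and in either case one must check that no exceptional point acquires a ``bad'' semigroup such as $\langle2,3\rangle$. A subsidiary nuisance is guaranteeing the smooth $k$-\emph{rational} points demanded in (i) and (ii) when the residue field is very small: for genus-$0$ components these exist automatically, and otherwise one exploits the freedom in the choice of $c_i$ and $\beta_i$ or a Weil-type point-count. I would note finally that the inclusion $\calD(C/K)\subseteq2\NN\cup\bigcup_in_i\NN$ can alternatively be obtained \emph{without} the model by a direct valuation count: a degree-$d$ closed point of $C$ lies over a closed point $a$ of $\PP^1_K$ with either $f(a)=0$ (forcing $a$ into a totally ramified degree-$n_i$ extension, so $n_i\mid d$), or $f(a)\in\kk(a)^{\times2}\setminus\{0\}$ (a case-analysis on $v_{\kk(a)}(a-c_i)$ shows $v_{\kk(a)}(f(a))$ is even only when $2\mid e(\kk(a)/K)$ or $n_i\mid e(\kk(a)/K)$ for the unique $i$ with $\bar a=\bar c_i$), or $f(a)\notin\kk(a)^{\times2}$ (so $d\in2\NN$), or $a=\infty$ (so $d=2$); only the reverse inclusion genuinely requires the model.
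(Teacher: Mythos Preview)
Your approach differs substantially from the paper's and is considerably more involved. The paper exploits the factor $\deg_k(x)$ in~\eqref{eq:MainThm} rather than $\calN(x)$: take $C:y^2=\pi f(x)$ with $f\in R[x]$ monic of even degree whose reduction $\bar f\in k[x]$ is separable with irreducible factors of degrees exactly $n_1,\dots,n_\ell$ (such $f$ exists precisely because $k$ is finite). The minimal regular model then has special fiber a multiplicity-$2$ spine $\cong\PP^1_k$ transversally meeting one multiplicity-$1$ component for each irreducible factor of $\bar f$; the component attached to a degree-$n_i$ factor is a $\Gal(\kbar/k)$-orbit of $n_i$ geometric $\PP^1$'s, hence a $\PP^1$ over the degree-$n_i$ extension of $k$, and every closed point on it has $k$-degree in $n_i\NN$. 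Thus $\deg_k(x)\calN(x)\subseteq 2\NN$ for $x$ only on the spine and $\deg_k(x)\calN(x)\subseteq n_i\NN$ otherwise, while smooth $k$-points on the spine and smooth degree-$n_i$ points on the $i$-th component realize $2$ and $n_i$. A single blow-up at each root of $\bar f$ suffices; all multiplicities are $1$ or $2$.

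Your route instead manufactures components of \emph{multiplicity} $n_i$ by generalizing the Type~[IV] example. This might well succeed, but as you yourself flag, the ``technical heart''---resolving $y^2=\pi(t^{2n}+\alpha\pi t^n+\pi^2)$ for arbitrary odd $n\ge3$ and verifying that every resulting $\calN(x)$ lies in $2\NN\cup n\NN$---is only asserted, not carried out, so the argument as written has a real gap. You also impose $\Char(k)\nmid 2n_1\cdots n_\ell$, a hypothesis absent from the statement. The paper's construction sidesteps all of this by letting the Galois action on the geometric components, rather than the multiplicities, encode the $n_i$; this is exactly where the finiteness of $k$ is used.
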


\begin{proof}
	Let $f(x) \in R[x]$ be an even degree monic polynomial such that the reduction mod $\pi$ is a separable polynomial in $k[x]$ such that the set of degrees of the irreducible factors of $f(x)$ modulo $\pi$ is $\{ n_1,\dots,n_\ell \}$. Let $C/K$ be the hyperelliptic curve defined by $y^2 = \pi f(x)$. One can compute that the special fiber of the minimal regular model of $C$ over $R$ is geometrically given by a spine of multiplicity $2$ that transversely intersects $\deg(f(x))$ multiplicity $1$ components that are labeled by the roots of $f(x)$. The $\Gal(\kbar/k)$-action on the multiplicity $1$ components agrees with the Galois action of the roots of $f(x)$ modulo $\pi$, hence the irreducible components (over $k$) of multiplicity $1$ correspond to the irreducible factors of $f(x) \bmod \pi$, each consisting of $\Gal(\kbar/k)$-orbit of size $n_i$ of geometric irreducible components. For a closed point $x$ on such a component we have $\deg_k(x) \in n_i\NN$. Any point not on one of these multiplicity $1$ components lies on the multiplicity $2$ component and has $\calN(x) = 2\NN$. We conclude by Theorem~\ref{thm:MainThm}.
\end{proof}

\section{Degree sets of genus \(2\) curves}\label{sec:Genus2}

In this section we use the classification of minimal regular models of genus $2$ curves in \cites{Ogg,NU} to prove Theorem~\ref{thm:g2} and Corollary~\ref{cor:QpAndQpnr}.

\begin{lemma}\label{lem:LargeField}
	Let \(F\) be a large field and let \(C/F\) be a smooth genus \(2\) curve.  Then \(2\NN \cap \calD(\A^1/F) \subset \calD(C/F)\) and, if \(\delta(C) = 1\), then \(3\NN\cap \calD(\A^1/F) \subset \calD(C/F)\).
\end{lemma}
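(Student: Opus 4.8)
The plan is to combine the hyperelliptic structure of the genus‑$2$ curve with the following standard consequence of largeness: by \cite{LiuLorenzini}*{Proposition 8.3}, for every finite extension $L/F$ with $C(L)\neq\emptyset$ there is a closed point of $C$ whose residue field is isomorphic to $L$, hence of degree $[L:F]$. Thus it suffices to show that for each $m\in 2\NN\cap\calD(\A^1/F)$ there is a degree‑$m$ extension $L/F$ with $C(L)\neq\emptyset$, and likewise, when $\delta(C)=1$, for each $m\in 3\NN\cap\calD(\A^1/F)$. Recall that $m\in\calD(\A^1/F)$ says precisely that $F$ has a degree‑$m$ extension generated by a single element; fix such an $M$.

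If $C(F)\neq\emptyset$ we are done at once: $C(M)\supseteq C(F)\neq\emptyset$ and $[M:F]=m$, so $L=M$ works (this uses neither the genus nor any divisibility of $m$). So assume $C(F)=\emptyset$. Since $C$ has genus $2$, the canonical class $K_C$ is effective of degree $2$ with $h^0(K_C)=2$; as $C(F)=\emptyset$, a divisor in $|K_C|$ defined over $F$ cannot be a sum of $F$‑points, so it is a single closed point of degree $2$. Let $E$ be its residue field — a quadratic extension of $F$, again large, with $C(E)\neq\emptyset$. Writing $m=2n$, it now suffices to produce a degree‑$n$ extension $N/E$ (take $N=E$ when $n=1$): then $[N:F]=m$ and $C(N)\supseteq C(E)\neq\emptyset$, finishing. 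For the $3\NN$‑assertion one replaces the degree‑$2$ point by a degree‑$3$ point of $C$: when $\delta(C)=1$, Riemann--Roch produces an effective divisor of degree $3$, which is a single closed point since $C(F)=\emptyset$; its residue field $E$ is then a cubic extension with $C(E)\neq\emptyset$, and one runs the same argument with $m=3k$.

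The step I expect to be the main obstacle is the last one in the previous paragraph: guaranteeing that, for a suitable choice of the auxiliary degree‑$2$ (resp.\ degree‑$3$) point of $C$, the field $E$ admits an extension of degree $n=m/2$ (resp.\ $k=m/3$) — equivalently, that $2\NN\cap\calD(\A^1/F)\subseteq 2\bigl(\calD(\A^1/E)\cup\{1\}\bigr)$. This is transparent whenever $\calD(\A^1/F)=\NN$, which holds for every Henselian field with perfect residue field — exactly the setting of Theorem~\ref{thm:g2} — since then $\calD(\A^1/E)=\NN$ as well. In general one exploits the fact that, with $C(F)=\emptyset$, the curve has a large supply of degree‑$2$ points: the fibers $\phi^{-1}(q)$ of the canonical map $\phi\colon C\to\PP^1$ over $q\in\PP^1(F)$ outside the branch locus have residue fields $F(\sqrt{f(q)})$, and these (together with the quadratic Weierstrass fields) give many quadratic $E$ with $C(E)\neq\emptyset$ to match against the subfield structure of $M$; every remaining step is formal.
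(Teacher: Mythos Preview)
Your argument shares the two key constructions with the paper's proof: you produce a degree-$2$ point from the hyperelliptic map (equivalently, the canonical system) and a degree-$3$ point from Riemann--Roch when $\delta(C)=1$, exactly as the paper does. The divergence, and the genuine gap, is in how you invoke \cite{LiuLorenzini}*{Proposition~8.3}. That proposition in fact gives the implication the paper cites directly: for $F$ large and $C/F$ a smooth curve, if $d\in\calD(C/F)$ then $d\NN\cap\calD(\A^1/F)\subset\calD(C/F)$. Once $2\in\calD(C/F)$ (and $3\in\calD(C/F)$ when $\delta(C)=1$) are in hand, the lemma follows immediately.

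You instead extract only the weaker consequence that $C(L)\neq\emptyset$ yields a closed point with residue field $L$. This forces you, for each $m=2n\in\calD(\A^1/F)$, to manufacture a specific degree-$m$ extension $L$ with $C(L)\neq\emptyset$, and hence to show that your chosen quadratic $E$ admits a degree-$n$ extension. As you correctly flag, that is the obstruction, and your proposed workaround---varying the degree-$2$ point so that its residue field matches the subfield lattice of a fixed degree-$m$ extension $M$---does not close it. A degree-$m$ extension $M/F$ need not contain any quadratic subfield at all (e.g.\ a quartic whose Galois closure has group $S_4$), so there may be nothing to match; and even when $M$ does have a quadratic subfield, there is no mechanism forcing it to be of the form $F(\sqrt{f(q)})$ for some $q\in\PP^1(F)$. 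The sentence ``every remaining step is formal'' is not justified.

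Your argument is complete when $\calD(\A^1/F)=\NN$, so it does cover the Henselian fields appearing in Theorem~\ref{thm:g2}; but it does not prove the lemma as stated for arbitrary large $F$. The fix is simply to cite the full statement of Liu--Lorenzini's Proposition~8.3 rather than the corollary you use.
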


\begin{remark}\label{rmks2}\hfill
	Note that $\calD(C/F) \subset \calD(\A^1/F)$, since the latter is the set of degrees of finite extensions of $F$.
\end{remark}

\begin{proof}
	By~\cite{LiuLorenzini}*{Proposition 8.3}, if \(d\in \calD(C/F)\) then \(d\NN \cap \calD(\A^1/F) \subset \calD(C/F)\).  In particular, if \(C(F)\neq \emptyset\) then \(\calD(C/F) = \NN \cap \calD(\A^1/F)\).  So we may assume that \(C(F) = \emptyset\).  Then for any \(x\in \PP^1(F)\) the fiber above \(x\) in the hyperelliptic morphism \(C\to \PP^1\) gives a degree \(2\) point on \(C\).  If \(\delta(C) = 1\), then there is a \(k\)-rational \(0\)-cycle \(z\) of degree \(1\) and the Riemann-Roch theorem implies that \(3z\) is effective.  Since we assumed that \(C(F) = \emptyset\), \(3z\) must be linearly equivalent to an irreducible effective \(0\)-cycle, and thus \(3\in \calD(C/F)\), as desired.
\end{proof}

\begin{prop}[Consequence of~\cites{Ogg,NU}]\label{prop:NUSummary}
	Let \(K\) be the field of fractions of a Henselian discrete valuation ring $R$ with perfect residue field $k$ such that \(30\in R^{\times}\), let \(C/K\) be a smooth projective and geometrically irreducible genus \(2\) curve, and let \(X\to \Spec(R)\) be a minimal proper regular model of \(C\).

	Assume either that \(X_s\) contains a component of multiplicity coprime to \(6\) that has index coprime to \(6\) or that \(X_s\) has components \(E_1, \dots, E_r\) that meet in a point of degree coprime to \(6\) and such that \(\gcd(m(E_i))\) is coprime to \(6\).  Then at least one of the following holds:
		\begin{enumerate}
			\item \(X_s\) is a regular genus \(2\) curve over \(k\);\label{class:smoothgenus2}
			\item\label{it1} \(X_s\) is geometrically a union of two regular genus $1$ curves meeting transversely at a $k$-point;\label{class:2genus1}
			\item \(X_s\) has a regular geometrically irreducible component $E$ of multiplicity $1$ and geometric genus $1$ that meets the rest of $X_s$ at a $k$-point. On the blow up of $X$ at this point the exceptional divisor has multiplicity $3$ and meets the strict transform of $E$ at a $k$-point where the special fiber has strict normal crossings.\label{class:Genus1MeetingGenus0}
			\item\label{it2} \(X_s\) has a component \(E\) of multiplicity \(1\), index \(1\), geometric genus \(0\), and which contains an open \(U\) such that \(\deg(E-U)\leq 4\) and \(U\) is contained in the regular locus of \((X_s)_{\red}\);\label{class:genus0mult1} 
			\item\label{it3} \(X_s\) has components of multiplicity \(2\) and \(3\) that meet at a $k$-point where $X_s$ has strict normal crossings;\label{class:SNCMult2And3}
			\item\label{it4} \(X\) has Namikawa-Ueno type \textup{[II\(^*\) - IV - \(\alpha\)]} and \(\calD(C/K)\supset \NN_{>1}\).\label{class:3+4cup5}
		\end{enumerate}
\end{prop}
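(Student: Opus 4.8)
The plan is to prove the proposition by a finite inspection of the Namikawa--Ueno classification of minimal proper regular models of genus~$2$ curves~\cites{Ogg,NU}, available here because $\Char(k)\nmid 30$. That classification records the \emph{geometric} special fiber $X_s\times_k\kbar$ together with its weighted dual graph, so I would carry along the $\Gal(\kbar/k)$-action on the dual graph in order to extract the multiplicities $m(E_i)$ (which are insensitive to base change), the geometric genera of the components, the residue degrees $\deg_k(x)$ of the intersection points, and the indices $\delta(E_i/k)$, passing when convenient to a strict normal crossings model obtained from $X$ by blow-ups so that Theorem~\ref{thm:MainThm} applies. The only numerical input really used is that in a genus~$2$ fiber the multiplicities are bounded, and that the multiplicities coprime to~$6$ appearing in the list are only $1$ and~$5$ (the value~$5$ only within a short list of exceptional types). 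The hypothesis says precisely that $X_s$ displays at least one ``coprime to~$6$'' feature of the two kinds named, and the proof is a case-split on which feature occurs.

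Suppose first that $X_s$ has an irreducible component $E$ of multiplicity~$1$ whose index $\delta(E/k)$ is coprime to~$6$, and split according to the geometric genus of~$E$, using $p_a(X_s)=p_a(C)=2$ and connectedness of $X_s$. If $E=X_s$ is smooth of genus~$2$ we are in~\eqref{class:smoothgenus2}; otherwise $E$ has geometric genus $0$ or~$1$. If the genus is~$1$ then $E$ is attached to the remainder of $X_s$ along a single point, and running through the finitely many such configurations gives either~\eqref{class:2genus1} (the remainder is a second regular genus-$1$ curve meeting $E$ transversally at a $k$-point) or~\eqref{class:Genus1MeetingGenus0} (the remainder is a tree of rational curves; the attaching point is a $k$-point and the blow-up there produces a multiplicity-$3$ exceptional divisor meeting the strict transform of $E$ with strict normal crossings). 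If the genus is~$0$ then minimality of the model (no multiplicity-$1$ $(-1)$-curve) bounds by~$4$ the number of points one must delete from $E$ to reach the regular locus of $(X_s)_{\red}$, giving~\eqref{class:genus0mult1}.

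Now suppose no multiplicity-$1$ component has index coprime to~$6$. Since the only other multiplicities coprime to~$6$ that occur are equal to~$5$, the first clause of the hypothesis can then be met only by a component of multiplicity~$5$, while the second clause forces, after discarding cases already covered, a multiplicity-$2$ and a multiplicity-$3$ component through a point~$P$. In that latter situation, inspecting the types in which such a pair of components meets shows the meeting is always transversal, at a $k$-point, with no third component present --- conclusion~\eqref{class:SNCMult2And3} --- the would-be tangential or higher-residue-degree variants either not occurring in a minimal regular model with $\Char(k)\nmid 30$, or, after the prescribed blow-ups, producing a multiplicity-$1$ component and reducing to the previous paragraph. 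Finally, one checks that every remaining type satisfying the hypothesis is of Namikawa--Ueno type \textup{[II\(^*\) - IV - \(\alpha\)]}; for it I would read off the dual graph from~\cite{NU} and prove $\calD(C/K)\supset\NN_{>1}$, i.e.~\eqref{class:3+4cup5}, by locating $k$-points in the regular locus of $(X_s)_{\red}$ on components of small multiplicity together with suitable transversal intersection points, and then applying Theorem~\ref{thm:MainThm} (or, equivalently, Lemma~\ref{lem:LargeField} together with $\delta(C)=1$, which is forced here) to see that $\bigcup_{x\in X_s}\deg_k(x)\calN(x)$ contains $2\NN\cup 3\NN$ and a further semigroup cofinite in $\NN$ filling the remaining gaps, hence equals $\NN_{>1}$.

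The main obstacle is the bookkeeping: one must actually traverse the Namikawa--Ueno table and, for every type satisfying the hypothesis --- including the irreducible (nodal or cuspidal) fibers and every $\Gal(\kbar/k)$-twist of each type --- verify that the geometric genera, multiplicities, incidence pattern and fields of definition of the relevant components are exactly as asserted in~\eqref{class:smoothgenus2}--\eqref{class:3+4cup5}. The single most delicate point is the type \textup{[II\(^*\) - IV - \(\alpha\)]}, where the conclusion is the concrete statement $\calD(C/K)\supset\NN_{>1}$ rather than a structural one, so one must genuinely exhibit closed points of $X_\eta$ of every degree $\ge 2$.
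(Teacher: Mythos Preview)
Your proposal follows essentially the same route as the paper --- a finite case-check against the Namikawa--Ueno tables, tracking the $\Gal(\kbar/k)$-action on the dual graph --- though you organise the cases differently: you split first on whether $X_s$ has a multiplicity-$1$ component of index coprime to $6$, whereas the paper splits first on whether $X_s$ has a component of multiplicity $m\ge 2$ coprime to $6$. Both arrive at the same endpoint, and both ultimately defer the verification to the tables.

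One place your write-up is slightly sharper than the paper: your bound of $4$ in case~\eqref{class:genus0mult1} can indeed be obtained abstractly rather than read off the tables. From $K_X\cdot X_s=2$, minimality forces $K_X\cdot F\ge 0$ for every irreducible component $F$, so for a multiplicity-$1$ smooth rational component $E$ one gets $K_X\cdot E\le 2$, hence $-E^2\le 4$, and the number of geometric points in $E\cap(X_s-E)$ is at most $\sum_i E\cdot E_i\le\sum_i m_i(E\cdot E_i)=-E^2\le 4$. The paper simply observes the bound type by type.

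One place you should tighten: class~\eqref{class:genus0mult1} requires the multiplicity-$1$ component to have \emph{index $1$}, not merely index coprime to $6$. For this you need $E$ to be geometrically irreducible (then genus $0$ forces $\delta(E/k)\in\{1,2\}$, and coprimality to $6$ gives $\delta=1$). The paper secures geometric irreducibility by checking, for each of the $\sim 110$ relevant types, that some multiplicity-$1$ vertex is fixed by every graph automorphism of order coprime to $6$; since the Galois image lies in the automorphism group of the dual graph and the hypothesis constrains its order, this pins down a Galois-stable component. You flag the Galois-twist bookkeeping in your final paragraph, but you should make this fixedness step explicit rather than fold it into ``running through the configurations.'' A minor related point: the second clause of the hypothesis does not literally force components of multiplicity $2$ and $3$ through $P$, only components whose multiplicities are $\equiv 2,4\pmod 6$ and $\equiv 3\pmod 6$; the reduction to an actual SNC $2$--$3$ crossing at a $k$-point again comes from the tables.
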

\begin{remark}
	The classification in~\cite{NU} is over an algebraically closed residue field (of characteristic $p \notin \{2,3,5\}$), so to determine the possibilities for the irreducible components and degrees of their points of intersection over $k$, one must consider the Galois action on the components. For example, the configuration depicted in Figure~\eqref{fig:1} admits an automorphism interchanging the multiplicity $3$ components. If this automorphism is realized by the Galois action on these geometric components (which is the case if $\alpha^2-4 \in R^\times\setminus R^{\times 2}$ in~\eqref{ex:2Ncup3N}), then the special fiber has a single irreducible component $E$ of multiplicity $3$ and for this component we have $[\kk(E) \cap \kbar : k] = 2$.
\end{remark}
\begin{lemma}\label{lem:NUexception}
	Let \(K\) be the field of fractions of a Henselian discrete valuation ring $R$ with perfect residue field, let \(C/K\) be a smooth projective and geometrically irreducible genus \(2\) curve, and let \(X\to \Spec(R)\) be a minimal proper regular model of \(C\).  Assume \(30\in R^{\times}\) and that \(X\) has Namikawa-Ueno type \textup{[II\(^*\) - IV - \(\alpha\)]}.  
	Then \(\calD(C/K)\supset \NN_{>1}\).
\end{lemma}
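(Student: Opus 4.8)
The plan is to read an explicit strict normal crossings model of $C$ off the Namikawa--Ueno table for this single fibre type and to compute enough of its degree set directly.

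\textbf{Reduction to the special fibre.}
Since $R$ is Henselian, $K$ is a large field \cite{Pop}. By Theorem~\ref{thm:general} we already have $\bigcup_{x\in X_s^{\mathrm{SNC}}}\deg_k(x)\calN(x)\subseteq\calD(C/K)$, and after finitely many blow-ups at closed points of the special fibre (as in the proof of Proposition~\ref{prop:bounding}) one obtains a regular model $X'\to S$ with $X'_\eta\simeq C$ and $X'_s$ a strict normal crossings divisor, whose components, multiplicities and incidences are the explicit Namikawa--Ueno configuration with its non-SNC points resolved, the new exceptional multiplicities being given by \cite{GLL}*{Proposition 8.3(1)}. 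Theorem~\ref{thm:MainThm} then gives $\calD(C/K)=\bigcup_{x\in X'_s}\deg_k(x)\calN(x)$, so it suffices to produce, for every integer $n\ge 2$, a closed point $x\in X'_s$ with $n\in\deg_k(x)\calN(x)$; in practice I will use $k$-rational points, for which it is enough that $n\in\calN(x)$.

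\textbf{The configuration of type \textup{[II\(^*\) - IV - \(\alpha\)]}.}
The first task is to transcribe from \cite{NU} (using $30\in R^\times$) the geometric special fibre of a minimal proper regular model of this type --- its irreducible components, their multiplicities, the transversal incidences, and the induced $\Gal(\kbar/k)$-action, which is constrained by the parameter $\alpha$ --- together with the blow-ups that make it SNC. Two structural features drive the argument, and I would extract them from the table. First, the numerical data of the configuration forces $\delta(C/K)=1$ via Corollary~\ref{cor:index}: there is a component of odd multiplicity and index $1$ (geometrically a $\PP^1$ meeting the rest of the fibre at a single, hence $k$-rational, point, so defined over $k$ with a $k$-point), and since a genus $2$ curve has index dividing $2g-2=2$ this yields $\delta(C/K)=1$. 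Second --- and this is why the type is singled out in Proposition~\ref{prop:NUSummary} rather than being an instance of its strict normal crossings case --- there is no $k$-rational point at which a multiplicity-$2$ and a multiplicity-$3$ component meet transversally, so the small numerical data needed to generate $\NN_{>1}$ is not concentrated at one point but must be collected from several. (If, contrary to expectation, such a point does occur, then $\calN(x)=\langle 2,3\rangle=\NN_{>1}$ already finishes the proof.)

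\textbf{Assembling $\NN_{>1}$.}
With $\delta(C/K)=1$ and $K$ large, Lemma~\ref{lem:LargeField} together with the observation that $\calD(\A^1/K)=\NN$ (Eisenstein polynomials of every degree exist over $R$, so every positive integer is the degree of a totally ramified extension of $K$) gives $2\NN\cup 3\NN\subseteq\calD(C/K)$. It remains to capture the integers $n\ge 5$ with $\gcd(n,6)=1$. For these I would point, class by class as needed, to a $k$-rational closed point $x$ of $X'_s$ --- lying on a single component of small multiplicity ($\calN(x)=m\NN$), on a transversal pair of components of coprime multiplicities $a,b$ ($\calN(x)=\langle a,b\rangle$), or on one of the exceptional $\PP^1_k$'s produced in the first step --- with $n\in\calN(x)$. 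This reduces to a finite numerical check: a short union such as $2\NN\cup 3\NN\cup\langle 3,4\rangle\cup(\text{a semigroup containing }5)$ already equals $\NN_{>1}$ (for instance $\langle 3,4\rangle=\NN_{>1}\setminus\{2,5\}$, so only $2$ and $5$ need separate accounting), combined with the verification that the components and intersection points realizing the required semigroups are single $\Gal(\kbar/k)$-stable geometric objects, hence defined over $k$ with $k$-rational intersection points.

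\textbf{The main obstacle.}
All of the real content is the bookkeeping for this one Namikawa--Ueno type: transcribing the component/multiplicity/Galois data correctly, resolving the non-SNC points by explicit blow-ups while tracking the exceptional multiplicities, and confirming that the resulting finite union $\bigcup_x\deg_k(x)\calN(x)$ is exactly $\NN_{>1}$ --- omitting $1$, consistent with the statement, but nothing larger. The delicate step is rationality over $k$: one must ensure the blow-up centres, and the points chosen on exceptional divisors, are genuinely $k$-rational rather than $\Gal(\kbar/k)$-stable closed points of higher degree. This is where the parameter $\alpha$ enters, and where one uses that components of distinct multiplicity, and the distinguished multiplicity-one tail, cannot be permuted by Galois.
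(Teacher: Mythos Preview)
Your strategy is exactly the paper's: use Lemma~\ref{lem:LargeField} (with $\delta(C/K)=1$) to secure $2\NN\cup 3\NN$, and then read off from the Namikawa--Ueno picture a $\langle 3,4\rangle$-type contribution together with something containing $5$ to fill in the integers coprime to $6$. The paper's proof is simply the concrete execution you defer: in type \textup{[II$^*$ -- IV -- $\alpha$]} the minimal model already has a $k$-rational SNC point where components of multiplicity $3$ and $4$ meet (no preliminary blow-ups are needed), and there is a regular genus-$0$ component of multiplicity $5$ meeting the rest of $X_s$ in a degree-$2$ scheme, hence carrying a smooth $k$-point; these give $(3\NN+4\NN)\cup 5\NN$, and the union with $2\NN\cup 3\NN$ is $\NN_{>1}$.

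Two small corrections to your write-up. First, your general SNC-resolution step is unnecessary here and slightly obscures the point: the relevant intersection points on the minimal model are already SNC, so Proposition~\ref{prop:D(x)} applies directly. Second, your route to $\delta(C/K)=1$ via ``a component of odd multiplicity and index $1$'' is more delicate than needed --- the multiplicity-$5$ component already gives $5\in\calD(C/K)$, and together with $2\in\calD(C/K)$ from the hyperelliptic map this forces index $1$. Beyond that, what you call ``the main obstacle'' (the bookkeeping) is the entire proof; the paper does it in two sentences.
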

\begin{proof}
	In Namikawa-Ueno type \textup{[II\(^*\) - IV - \(\alpha\)]}, the special fiber contains components of multiplicity \(3\) and \(4\) that meet at a degree \(1\) SNC point and also a regular genus \(0\) component \(E\) of multiplicity \(5\) that meets the rest of \(X_s\) in a degree \(2\) subscheme.  Thus, Theorem~\ref{thm:MainThm} implies that \(\calD(C/K) \supset (3\NN + 4\NN)\cup (5\NN)\) and Lemma~\ref{lem:LargeField} implies that \(\calD(C/K)\) contains \(2\NN\cup 3\NN\).  Since \((3\NN + 4\NN)\cup 5\NN\cup 2\NN\cup 3\NN = \NN_{>1}\), we have proved the claim.
\end{proof}
\begin{proof}[Proof of Proposition~\ref{prop:NUSummary}] First we consider the case then \(X_s\) contains no components of multiplicity coprime to \(6\) that have index coprime to \(6\).  Then either the theorem vacuously holds or \(X_s\) must have components \(E_1, \dots, E_r\) such that \(\gcd(m(E_i))\) is coprime to \(6\).  In particular, \(X_s\) must have a component of multiplicity congruent to \(3\) modulo \(6\) and a component of multiplicity congruent to \(2\) or \(4\) modulo \(6\) that meet.  There are three types where this can occur:\footnote{Whether this does occur depends on the Galois action on the geometric components of the special fiber so it is not completely determined by Namikawa-Ueno type.}
	\begin{equation}\label{eq:TypesNoMultCoprimeTo6}
		[\textup{I}_0^*-\textup{IV}^*-\alpha], \quad
		[\textup{IV}^* - \textup{IV}^* -\alpha], \quad
		[\textup{III}^* - \textup{II}_0], 
	\end{equation}
	and one can check in all of these cases that~\eqref{class:SNCMult2And3} holds.

	Now assume that \(X_s\) has a component of multiplicity \(m\geq 2\) with \(\gcd(m, 6) = 1\).  There are twenty-four of these and (listed in the order they appear in~\cite{NU}) they are:
	\begin{center}
	\begin{tabular}{rrrrrr}
		[$\textup{V}^*$], &
		[$\textup{VII}^*$], &
	 	[$\textup{VIII}-2$], &
		[$\textup{VIII}-4$], & 
		[$\textup{IX}-3$], \\
		
		[$\textup{IX}-4$], &
		$[\textup{I}_0 - \textup{II}^* - m]$, & 
		$[\textup{I}_0^* - \textup{II}^* - m]$, & 
		$[\textup{I}_0^* - \textup{II}^* - \alpha]$, & 
		$[\textup{II} -\textup{II}^* - m]$,\\ 

		$[\textup{II}^* -\textup{II}^* - m]$, & 
		$[\textup{II}^*-\textup{II}^* -  	\alpha]$,&
		$[\textup{II}^* - \textup{IV} - m]$, &
		$[\textup{II}^* - \textup{IV} - \alpha]$, &
		$[\textup{II}^* - \textup{IV}^* - m]$, \\

		$[\textup{II}^* - \textup{IV}^* - \alpha]$, & 
		$[\textup{II}^* - \textup{III}-m]$, &
		$[\textup{II}^* - \textup{III} - \alpha]$,&
		$[\textup{II}^* - \textup{III}^* - m]$,&
		$[\textup{II}^* - \textup{III}^* - \alpha]$,\\

		$[\textup{II}^* - \textup{I}_n - m]$,&
		$[\textup{II}^* - \textup{I}_n^* - m]$,&
		$[\textup{II}^* - \textup{I}_n^* - \alpha]$,&
		$[\textup{II}^* - \textup{II}_n^*]$.
	\end{tabular}
\end{center}
Type  $[\textup{II}^* - \textup{IV}- \alpha]$ falls into class~\eqref{class:3+4cup5} by Lemma~\ref{lem:NUexception}. Types
\[
	[\textup{I}_0 - \textup{II}^* - 0], \; 
	[\textup{I}_0^* - \textup{II}^* -\alpha], \;
	[\textup{II}^* -\textup{II}^* - 0],\; 
	[\textup{II}^* - \textup{II}^* -\alpha], \;\textup{and} \; 
	[\textup{II}^* - \textup{IV} -\alpha], 
\] have multiplicity \(2\) and \(3\) components that meet in a unique point and that are fixed by every graph automorphism and so fall into class~\eqref{class:SNCMult2And3}.  (These six types together with~\eqref{eq:TypesNoMultCoprimeTo6} are the only Namikawa-Ueno types where the assumptions of the proposition can hold without the existence of a component in \(X_s\) with multiplicity \(1\) and index coprime to \(6\).)
The remaining twenty types contain a multiplicity \(1\) component \(E\) that is fixed by all automorphisms that fix another component whose multiplicity is coprime to \(6\), and further that this multiplicity \(1\) component 
has at most \(4\) points that are not regular points on $(X_s)_{\red}$. 

Thus, the only remaining case is when there are components of multiplicity \(1\) and there are no components of multiplicity \(m>1\) with \(m\) coprime to \(6\).  In addition to the case of smooth reduction (type \([\textup{I}_{0-0-0}]\)), which falls into class~\eqref{class:smoothgenus2}, the case of a chain of regular genus $1$ curves (type \([\textup{I}_{0}-\textup{I}_{0}-1]\)), which falls into class~\eqref{class:2genus1},
and three types ($[\textup{I}_0 - \textup{I}_0^*-0]$, 
$[\textup{I}_0 - \textup{IV} -0]$, and 
$[\textup{I}_0-\textup{IV}^*-0]$) 
which fall into class~\eqref{class:Genus1MeetingGenus0}, there are ninety other types that fall into this case, and in all of them we observe that there is a multiplicity \(1\) component \(E\) that is fixed by every graph automorphism with order coprime to \(6\), and furthermore that this component has geometric genus $0$ and has at most \(4\) points that are not regular points on $(X_s)_{\red}$.
\end{proof}




\begin{proof}[Proof of Theorem~\ref{thm:g2}]
	If \(\delta(C) = 2g-2 = 2\), then Lemma~\ref{lem:LargeField} implies that \(\calD(C/K) = 2\NN\).  If \(\delta(C) = 1\), then Lemma~\ref{lem:LargeField} implies that \(2\NN\cup 3\NN \subset \calD(C/K)\).  Let \(X\to \Spec(R)\) be a proper SNC model of \(C\).  If \(X_s\) is regular, then Theorem~\ref{thm:MainThm} implies that \(\calD(C/K) = \bigcup_{d\in \calD(X_s/k)}d\NN\).

	Now assume that \(X_s\) is not regular. We will prove that if \(\calD(C/K)\setminus(2\NN\cup 3\NN )\neq\emptyset\), then \({\NN_{>1}}\subset \calD(C/K)\) {which} will give our desired classification.  Assume that \(\calD(C/K)\) contains some \(d\) is coprime to \(6\), ie., \(d\notin 2\NN\cup 3\NN \).  Theorem~\ref{thm:MainThm} implies that there exists an \(x\in X_s\) such that \(d\in \deg_k(x)\calN(x)\).  In particular, \(\deg_k(x)\) must be coprime to \(6\) and \(\gcd(\{m(E) : x\in E\})\) is coprime to \(6\) (where \(E\) ranges over the irreducible components of \((X_s)_{\textup{red}}\)).  Note that for any birational morphism of proper regular varieties \(f\colon X \to X'\) and any curve \(E\ni x\) that is contracted by \(f\), \(\gcd(\{m(E') : f(x)\in E'\}) | m(E)\) and \(\deg(f(x)) | \deg(x)\).  Thus, \(\deg(f(x))\) and \(\gcd(\{m(E') : f(x)\in E'\})\) must be coprime to \(6\) as well. Since $X_s$ is not regular, we may assume the minimal regular model $X'$ satisfies one of the cases~\eqref{class:2genus1}-\eqref{class:3+4cup5} in Proposition~\ref{prop:NUSummary} (which follows from the Namikawa-Ueno classification).
	
	In~\eqref{class:2genus1}, \(X'_s\) is geometrically a union of two regular genus $1$ curves meeting transversely at a $k$-point.  If $X'_s$ is SNC at this \(k\)-point, then Proposition~\ref{prop:D(x)} implies that \(\NN\subset \calD(C/K)\). Otherwise we may blow up this point to obtain a new proper regular model \(X''\). Then \(X''_s\) contains, geometrically, a chain of three curves with transverse intersections, and the middle curve is a \(\PP^1\) over \(k\).  Since \(\#\PP^1(k)>2\) for any field \(k\), there must be a smooth \(k\)-point of \(X''_s\) on this component and so Proposition~\ref{prop:D(x)} again implies that \(\NN\subset \calD(C/K)\).
	
	In~\eqref{class:Genus1MeetingGenus0}, there is (after a blow up) an SNC $k$-point $x$ with $\calN(x) = \langle 1,3\rangle = \NN_{\ge 4}$. So \(\calD(C/K)\) contains \(\NN_{\ge 4}\) by Proposition~\ref{prop:D(x)}. Then $C/K$ has index is $1$, so $\NN_{\ge 1} \subset \calD(C/K)$ by Lemma~\ref{lem:LargeField}.

	In~\eqref{class:genus0mult1}, \(X'_s\) contains a \(k\)-rational component \(E\) with multiplicity \(1\), index $1$ and branch number \(b(E;X_s)\leq 4\). Since \(\Char(k)\nmid 30\), we have \(\#k\geq 4\). Thus, \(E\) contains a regular degree \(1\) point that does not lie on any other component of \(X_s\) so \(\NN\subset \calD(C/K)\).
	
	In~\eqref{class:SNCMult2And3}, \(X'_s\) contains a \(k\)-point that is the intersection of components of multiplicity \(2\) and \(3\) and \(X_s\) is SNC at this point.  Thus, by Proposition~\ref{prop:D(x)}, \(\calD(C/K)\supset 2\NN + 3\NN\).  Furthermore, Lemma~\ref{lem:LargeField} implies that \(\calD(C/K)\) contains \(2\NN\cup 3\NN\), and so \(\calD(C/K) \supset \NN_{>1}\).

	In the final case~\eqref{class:3+4cup5}, \(X\) has type \textup{[II\(^*\) - IV - \(\alpha\)]} and \(\calD(C/K)\supset \NN_{>1}\).
\end{proof}
\begin{rmk}
	In case~\eqref{class:genus0mult1} above we have made use of the assumption $\Char(k) \nmid 30$ to ensure the existence of sufficiently many $k$-points on some curve over $k$. With some additional effort one can avoid this and prove (without assumption on the residue characteristic) that for every genus $2$ curve $C/K$ whose geometric special fiber is one of the Namikawa-Ueno types, the degree set $\calD(C/K)$ is one of those listed in Theorem~\ref{thm:g2}.
\end{rmk}


	\begin{lemma}\label{lem:Realizable}
	Let \(K\) be a Henselian field with perfect residue field, let \(\pi\) denote a uniformizer, and let \(C/K\) be the smooth projective curve \(y^2 = \eps \pi(x^2 + \pi)(x^4 + \pi)\).  Then
	\[
		\calD(C/K) = \begin{cases}
			2\NN & \textup{if }\eps\notin K^{\times2}\\	
			\NN_{>1} & \textup{if }\eps\in K^{\times2}
		\end{cases}
	\]	
	\end{lemma}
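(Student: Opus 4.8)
The plan is to avoid computing a regular model and instead analyse the degree-two map \(h\colon C\to\PP^1_K\), \((x,y)\mapsto x\), directly; write \(f(x):=\eps\pi(x^2+\pi)(x^4+\pi)\) for the defining polynomial. I will assume \(\Char k\neq 2\); this is all that is needed for the intended application, Corollary~\ref{cor:QpAndQpnr}, and the remaining case is indicated at the end. Two facts hold regardless of \(\eps\): first, since \(K\) is Henselian it is a large field and \(\calD(\A^1/K)=\NN\) (Eisenstein polynomials such as \(t^n+\pi t+\pi\) give separable totally ramified extensions of every degree), so Lemma~\ref{lem:LargeField} gives \(2\NN\subset\calD(C/K)\); second, \(\calD(C/K)\subset\NN_{>1}\), i.e. \(C(K)=\emptyset\), which will come out of the computation below. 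Thus everything reduces to understanding the \emph{odd} degrees on \(C\). Each closed point \(P\in C\) lies over a closed point \(Q=h(P)\in\PP^1_K\) with \([\kappa(P):\kappa(Q)]\in\{1,2\}\); if this degree is \(2\) then \(\deg_KP\) is even. The branch points of \(h\) are the roots of \(f\), with residue fields \(K[x]/(x^2+\pi)\) or \(K[x]/(x^4+\pi)\), both of even degree, and the fibre over \(\infty\) is a single point of degree \(2\) (since \(\deg f\) is even and \(v(\eps\pi)\) is odd); hence all ramification points of \(h\) have even degree. Consequently the odd-degree closed points of \(C\) are exactly the two points \(P,P'\) of a fibre \(h^{-1}(Q)\), for those closed points \(Q\in\A^1_K\) of odd degree with \(f(x_Q)\in\kappa(Q)^{\times 2}\setminus\{0\}\), where \(x_Q\in\kappa(Q)\) denotes the image of the coordinate.

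\textbf{Key computation.} The heart of the proof is to decide when \(f(x_Q)\) is a nonzero square in \(\kappa(Q)\), for \(Q\) of odd degree. Let \(v_Q\) be the valuation of \(\kappa(Q)\), let \(e:=e(\kappa(Q)/K)\) be the ramification index (so \(v_Q(\pi)=e\), and \(e\) is odd because \(\deg_KQ=e\cdot[\bar\kappa_Q:k]\) is), and set \(w:=v_Q(x_Q)\). Since \(e\) is odd we have \(2w\neq e\) and \(4w\neq e\), so there is no cancellation in \(x_Q^2+\pi\) or \(x_Q^4+\pi\); a short case check on \(v_Q(f(x_Q))=e+\min(2w,e)+\min(4w,e)\) shows it is \emph{odd}, hence \(f(x_Q)\) is a non-square, in every case \emph{except} when \(e/4<w<e/2\). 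In that range \(x_Q^2+\pi=x_Q^2(1+\pi x_Q^{-2})\) and \(x_Q^4+\pi=\pi(1+x_Q^4\pi^{-1})\) with both correction terms of positive valuation, so
\[
	f(x_Q)=\eps\,(\pi x_Q)^2\,(1+\pi x_Q^{-2})(1+x_Q^4\pi^{-1}),
\]
and each \(1\)-unit factor is a square in \(\kappa(Q)\) by Hensel's lemma (here \(\Char k\neq 2\)). Hence, for \(Q\) of odd degree, \(f(x_Q)\) is a nonzero square in \(\kappa(Q)\) \emph{iff} \(e/4<v_Q(x_Q)<e/2\) \emph{and} \(\eps\in\kappa(Q)^{\times2}\). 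Taking \(\deg_KQ=1\) (so \(e=1\), and the interval \((1/4,1/2)\) is empty) already shows \(C(K)=\emptyset\), for either value of \(\eps\).

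\textbf{The two cases.} Suppose \(\eps\notin K^{\times2}\); then \(\bar\eps\notin k^{\times2}\) (as \(\Char k\neq 2\)). If \(\eps\) were a square in \(\kappa(Q)\), its residue \(\bar\eps\) would be a square in \(\bar\kappa_Q\), forcing \(k(\sqrt{\bar\eps})\hookrightarrow\bar\kappa_Q\) and hence \(2\mid[\bar\kappa_Q:k]\), so \(\deg_KQ\) would be even — contradiction. So \(C\) has no point of odd degree, \(\delta(C/K)=2\), and with \(2\NN\subset\calD(C/K)\) this gives \(\calD(C/K)=2\NN\). Suppose now \(\eps\in K^{\times2}\), so WLOG \(\eps=1\). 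For a given odd \(d\geq3\), choose an integer \(v\) with \(d/4<v<d/2\) and \(\gcd(v,d)=1\) (such \(v\) exists for every odd \(d\geq3\), as one checks), let \(L/K\) be a separable totally ramified extension of degree \(d\) with uniformizer \(\theta\), and put \(x_Q:=\theta^v\). Since \(v_L(\theta^v)=v\) is coprime to \(d\), the element \(\theta^v\) generates \(L\), so \(Q\) is a closed point of \(\A^1_K\) of degree \(d\) with \(e=d\) and \(v_Q(x_Q)=v\in(d/4,d/2)\); by the key computation \(f(x_Q)\) is a nonzero square in \(L\), so \(d\in\calD(C/K)\). Combining this (over all odd \(d\geq3\)) with \(2\NN\subset\calD(C/K)\) and \(\calD(C/K)\subset\NN_{>1}\) yields \(\calD(C/K)=\NN_{>1}\).

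\textbf{Main obstacle; alternatives.} The only real work is the valuation bookkeeping for \(f(x_Q)\) — in particular correctly isolating the ``twin'' range \(e/4<w<e/2\), which is exactly where points specialise near the cluster \(\{\pm\sqrt{-\pi}\}\) and \(v_Q(f(x_Q))\) turns even; the rest is formal. The one place residue characteristic enters is Hensel's lemma for the \(1\)-unit factors, which needs \(\Char k\neq 2\). To cover \(\Char k=2\) one would instead invoke Theorem~\ref{thm:MainThm}: a regular strict-normal-crossings model of \(C\) can be obtained by successively blowing up \(y^2=f(x)\), with special fibre containing a multiplicity-\(4\) genus-one component reducing to \(\sigma^2=\bar\eps(\xi^4+1)\), a linking chain of \(\PP^1\)'s of multiplicities \(2,3,4\), and a chain resolving the twin; the Galois action on the geometric components is trivial precisely when \(\bar\eps\) is a square, and evaluating \(\bigcup_{x\in X_s}\deg_k(x)\calN(x)\) (together with Corollary~\ref{cor:index} for the index) recovers the same dichotomy.
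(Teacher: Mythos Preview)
Your proof is correct (under the assumption $\Char k\neq 2$ you flag, and the tacit assumption $\eps\in R^\times$ that the paper also makes) and takes a genuinely different route from the paper. The paper's argument identifies the Namikawa--Ueno type as $[\textup{III}^*-\textup{II}_0]$ and applies Theorem~\ref{thm:MainThm} to the SNC model: since all component multiplicities lie in $\{2,3,4\}$ and the only nontrivial graph automorphism swaps the two multiplicity-$3$ components, Theorem~\ref{thm:MainThm} forces $\calD(C/K)\in\{2\NN,\NN_{>1}\}$, and which one occurs is decided by a single check of whether $3\in\calD(C/K)$. You instead bypass the regular model and the main theorem entirely, analyzing the hyperelliptic map and the valuation of $f(x_Q)$ directly to decide, for \emph{every} odd $d$, whether a degree-$d$ point exists. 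Your approach is more elementary and logically independent of the paper's machinery; the paper's is shorter and serves as an illustration of Theorem~\ref{thm:MainThm}. Both ultimately rest on the same $1$-unit computation showing $f(x_Q)\in\eps\cdot\kappa(Q)^{\times2}$ precisely when $v_Q(x_Q)$ lies in the interval $(e/4,e/2)$---the paper only needs the instance $e=3$, $w=1$, whereas you carry it out for all odd $e$ (your choice $v=(d-1)/2$ makes the claimed coprime $v\in(d/4,d/2)$ explicit).
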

	\begin{proof}
		This curve is type \([\textup{III}^* - \textup{II}_0]\) in the Namikawa-Ueno classification, and every component has multiplicity \(2, 3, \) or \(4\).  The dual graph of the special fiber has an order \(2\) automorphism which interchanges the two vertices corresponding to the irreducible components of multiplicity \(3\) (which do not meet each other) and has no other automorphisms. Thus, Theorem~\ref{thm:MainThm} implies that \(3\in \calD(C/K)\) if and only if \(\calD(C/K) = 2\NN \cup 3\NN \cup (2\NN + 3\NN) = \NN_{>1}\).  Furthermore, \(3\in \calD(C/K)\) if and only if there exists an \(x_0\in \overline{K}^{\times}\) with \(v(x_0)\in \frac13\Z\) and \(\eps\pi(x_0^2 + \pi)(x_0^4 + \pi)\in K(x_0)^{\times2}\).  One can check that this is possible if and only if \(\eps \in K^{\times2}\) (in that case any \(x_0\) with \(v(x_0) = \frac13\) will work).
	\end{proof}

	\begin{lemma}\label{lem:SHindex1}
		Any smooth projective geometrically integral hyperelliptic curve $C$ of even genus over a Henselian field $K$ with algebraically closed residue field has index $\delta(C/K) = 1$.
	\end{lemma}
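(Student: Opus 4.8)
The plan is to produce a $K$-rational theta characteristic on $C$, i.e.\ a divisor class $L$ with $2L\sim K_C$; since $g$ is even such an $L$ has odd degree $g-1$, and this forces $\delta(C/K)=1$ as follows. Let $\phi\colon C\to\PP^1_K$ be the hyperelliptic map, defined over $K$ because $K_C$ is, and let $\lambda\in\Pic^2(C)(K)$ be the class of a fibre of $\phi$; this is the class of a $K$-rational effective divisor of degree $2$, so $\delta(C/K)\mid 2$. On the other hand $\Br(K)=0$ (the fraction field of a Henselian DVR with algebraically closed residue field is a $C_1$-field by Lang), so $\Pic(C)=\Pic_{C/K}(K)$ and $\delta(C/K)$, the positive generator of $\mathrm{image}(\deg\colon\Pic(C)\to\Z)$, divides the degree of any $K$-rational divisor class. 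Hence a single $K$-rational class of degree $g-1$ gives $\delta(C/K)\mid\gcd(2,g-1)=1$.

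Suppose first $\Char(k)\ne 2$. Then $\phi$ has exactly $2g+2$ ramification points $W_1,\dots,W_{2g+2}$ over $\Kbar$, permuted by $G:=\Gal(\Kbar/K)$, and one has $2W_i\sim\lambda$ for all $i$ together with $\sum_iW_i\sim(g+1)\lambda$ (in a model $y^2=F(x)$ with $\deg F=2g+2$ the function $y$ has divisor $\sum_iW_i-(g+1)(\infty_++\infty_-)$ and $\infty_++\infty_-\sim\lambda$; if $\deg F$ is odd then $C$ already has a rational Weierstrass point and we are done). The theta characteristics of $C$ are exactly the classes
\[
	L_T \;=\; \textstyle\sum_{i\in T}W_i-\frac{|T|+1-g}{2}\,\lambda,\qquad T\subseteq\{1,\dots,2g+2\},\ |T|\equiv g+1\pmod 2,
\]
all of degree $g-1$, and $L_T=L_{T^c}$. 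So it is enough to find a $G$-stable pair $\{T,T^c\}$ with $|T|$ odd: then $|T|=|T^c|=g+1$, and even if $G$ interchanges $T$ and $T^c$ one has $\sigma(L_T)=L_{T^c}=K_C-L_T=L_T$, so the class $L_T$ is $G$-fixed, hence $K$-rational.

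To build $T$, let $O_1,\dots,O_r$ be the $G$-orbits on $\{W_i\}$. If some $|O_j|$ is odd, take $T=O_j$. Otherwise every $|O_i|=2m_i$ with $\sum_i m_i=g+1$, and we invoke the structure of $G$: since $K$ has no nontrivial unramified extension, $G$ equals its inertia group, its wild part $P\trianglelefteq G$ is pro-$p$ (trivial if $\Char k=0$), and $G/P$ is pro-cyclic. Fix $O=O_i$. The $P$-orbits in $O$ all have a common $p$-power size $q$ and form a system of blocks on which $G$ acts through the cyclic quotient $G/P$; by transitivity this action is a single cycle of length $2m_i/q$, which is even since $q$ is odd. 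Cut this cycle of blocks into two alternating halves, let $A_i\subseteq O_i$ be the union of the blocks in one half (so $|A_i|=m_i$), and let $\epsilon_i\colon G\to\{\pm1\}$ record whether $g\in G$ preserves or swaps $A_i$ and $O_i\setminus A_i$. Then $\epsilon_i$ is trivial on $P$, hence factors through the pro-cyclic group $G/P$, and $\epsilon_i\ne 1$ because $G$ is transitive on $O_i$; therefore every $\epsilon_i$ equals the unique quadratic character $\chi$ of $G/P$. Consequently $T:=\bigcup_i A_i$ satisfies $g(T)\in\{T,T^c\}$ for all $g$ (according as $\chi(g)=\pm1$), while $|T|=\sum_i m_i=g+1$ is odd. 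This produces the desired $K$-rational theta characteristic, and the reduction above gives $\delta(C/K)=1$.

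When $\Char(k)=2$ the hyperelliptic cover is wildly ramified over at most $g+1$ points and the description of theta characteristics in terms of the ramification locus changes; that case is to be handled by a parallel argument --- again the essential point being that $g+1$ is odd --- and I expect it to be routine once the bookkeeping is set up. The main obstacle is the construction of $T$ in the all-even-orbits case: it is exactly here that both hypotheses enter, namely that $K$ is Henselian with algebraically closed residue field, so that the tame quotient of $G$ is pro-cyclic and the characters $\epsilon_i$ are forced to agree, and that $g$ is even, so that $|T|=g+1$ is odd and $L_T$ is a genuine theta characteristic.
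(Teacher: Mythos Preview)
Your approach is quite different from the paper's, which argues via the minimal regular model: the adjunction formula gives $\gcd(m_i) \mid g-1$ \cite[Exercise 9.1.8]{Liu}, and since $k = \bar k$ the index formula (Corollary~\ref{cor:index}) yields $\delta(C/K) = \gcd(m_i)$, hence $\delta$ is odd; combined with $\delta \mid 2$ this gives $\delta = 1$ in a few lines, uniformly in all residue characteristics. Your construction of a rational theta characteristic from the Galois action on Weierstrass points avoids models entirely, and for $\Char(k) \neq 2$ it is essentially correct (modulo the minor point that Lang's $C_1$ theorem is for complete DVRs; for Henselian $K$ one should invoke $\cd(K) \le 1$ or a similar argument to get $\Br(K) = 0$).

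The $\Char(k) = 2$ case, however, is a genuine gap and not routine. You conflate $\Char(k) = 2$ with $\Char(K) = 2$: in mixed characteristic $(0,2)$ the double cover is still tame with $2g+2$ Weierstrass points and the parametrization of theta characteristics by the classes $L_T$ is unchanged. What actually fails is your Case~2 construction. When $p = 2$ the tame quotient $G/P \cong \prod_{\ell \neq 2} \Z_\ell$ has \emph{no} character of order $2$, so your characters $\epsilon_i$ cannot be defined and there is no common $\chi$ to align the halves across orbits. Worse, the desired conclusion---existence of a rational theta characteristic---can fail outright: for $g = 2$, if $G$ acts on the six Weierstrass points through the transitive embedding $A_4 \hookrightarrow S_6$ (on the six $2$-element subsets of $\{1,2,3,4\}$, with $P$ mapping onto $V_4$ and $G/P$ onto $\Z/3$), one checks directly that no three-element subset $T$ has $\{T,T^c\}$ stable under $A_4$, so $C$ has no $K$-rational theta characteristic at all. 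The lemma is still true for such a curve (the paper's argument gives $\delta \mid g-1 = 1$), but your strategy cannot reach it; a genuinely different idea is needed in residue characteristic $2$.
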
	
	\begin{proof}
		Let $X \to \Spec(R)$ be the minimal regular model of $C$ over the integer ring $R$ of $K$ and let $m_i$ be the multiplicities of the irreducible components of the special fiber. The adjunction formula implies that that $\GCD(m_i)$ divides $g(C)-1$~\cite[Exercise 9.1.8]{Liu}, so $\GCD(m_i)$ must be odd. Since $k$ is algebraically closed, Corollary~\ref{cor:index} implies that $\delta(C/K) = \GCD(m_i)$, so $\delta(C/K)$ is also odd. On the other hand, $\delta(C/K)$ divides $2$ since $C$ is hyperelliptic, and so $\delta(C/K) = 1$.
	\end{proof}

	\begin{remark}
		There exist hyperelliptic curves over strictly Henselian fields with odd genus that have index \(2\), e.g., \(y^2 = \pi(t^4 + \pi)(t^4 + u\pi)\) where \(\pi\) is a uniformizer and \(u\neq 1\) is a unit.
	\end{remark}	
	
\begin{proof}[Proof of Corollary~\ref{cor:QpAndQpnr}]
	From Theorem~\ref{thm:g2} and Lemma~\ref{lem:finitefields}, we deduce that if {$k$ is algebraically closed or finite}, then \(\calD(C/K) \in \left\{2\NN, \NN, \NN_{>1}, 2\NN\cup 3\NN\right\}\). Lemma~\ref{lem:SHindex1} shows that $2\NN$ does not occur when $k$ is algebraically closed.
	
	It only remains to show that all of the other possibilities do in fact occur. The curve given in~\eqref{ex:2Ncup3N} has degree set \(2\NN\cup 3\NN\). Any smooth projective genus \(2\) curve over \(K\) with a point (e.g., \(y^2 = x^5 - 1\) if the characteristic is different from \(2\) and \(5\)) has degree set equal to \(\NN\) by \cite{LiuLorenzini}*{Proposition 8.3}.  Finally, Lemma~\ref{lem:Realizable} shows that degree sets \(2\NN\) and \(\NN_{>1}\) are realizable if {$k$ is finite} and that the degree set \(\NN_{>1}\) is realizable if {$k$ is algebraically closed}.  
\end{proof}


\begin{bibdiv}
    \begin{biblist}

		\bib{AW}{article}{
   author={Artin, M.},
   author={Winters, G.},
   title={Degenerate fibres and stable reduction of curves},
   journal={Topology},
   volume={10},
   date={1971},
   pages={373--383},
   issn={0040-9383},
   doi={10.1016/0040-9383(71)90028-0},
}
    
    \bib{BoschLiu}{article}{
   author={Bosch, Siegfried},
   author={Liu, Qing},
   title={Rational points of the group of components of a N\'{e}ron model},
   journal={Manuscripta Math.},
   volume={98},
   date={1999},
   number={3},
   pages={275--293},
   issn={0025-2611},
   doi={10.1007/s002290050140},
}

	\bib{BC}{article}{
		author={Bouthier, Alexis},
	  	author={\v{C}esnavi\v{c}ius, Kestutis},
		title={Torsors on Loop Groups and the Hitchin Fibration},
		journal={Ann. Sci. Éc. Norm. Supér.},
		note={To appear}
	}
    
    \bib{Clark}{article}{
   author={Clark, Pete L.},
   title={On the indices of curves over local fields},
   journal={Manuscripta Math.},
   volume={124},
   date={2007},
   number={4},
   pages={411--426},
   issn={0025-2611},
   doi={10.1007/s00229-007-0126-y},
}

\bib{CMP}{article}{
   author={Clark, Pete L.},
   author={Milosevic, Marko},
   author={Pollack, Paul},
   title={Typically bounding torsion},
   journal={J. Number Theory},
   volume={192},
   date={2018},
   pages={150--167},
   issn={0022-314X},
   doi={10.1016/j.jnt.2018.04.005},
}

\bib{CT-S}{article}{
   author={Colliot-Th\'{e}l\`ene, Jean-Louis},
   author={Saito, Shuji},
   title={Z\'{e}ro-cycles sur les vari\'{e}t\'{e}s $p$-adiques et groupe de Brauer},
   language={French},
   journal={Internat. Math. Res. Notices},
   date={1996},
   number={4},
   pages={151--160},
   issn={1073-7928},
   doi={10.1155/S107379289600013X},
}

      \bib{GLL}{article}{
   author={Gabber, Ofer},
   author={Liu, Qing},
   author={Lorenzini, Dino},
   title={The index of an algebraic variety},
   journal={Invent. Math.},
   volume={192},
   date={2013},
   number={3},
   pages={567--626},
   issn={0020-9910},
   doi={10.1007/s00222-012-0418-z},
}  

\bib{Liu}{book}{
   author={Liu, Qing},
   title={Algebraic geometry and arithmetic curves},
   series={Oxford Graduate Texts in Mathematics},
   volume={6},
   note={Translated from the French by Reinie Ern\'{e};
   Oxford Science Publications},
   publisher={Oxford University Press, Oxford},
   date={2002},
   pages={xvi+576},
   isbn={0-19-850284-2},
}

\bib{LiuLorenzini}{article}{
   author={Liu, Qing},
   author={Lorenzini, Dino},
   title={New points on curves},
   journal={Acta Arith.},
   volume={186},
   date={2018},
   number={2},
   pages={101--141},
   issn={0065-1036},
   doi={10.4064/aa170322-23-8},
}

\bib{LMFDB}{misc}{
  author       = {The {LMFDB Collaboration}},
  title        = {The {L}-functions and modular forms database},
  year         = {2023},
  note         = {[Online; accessed 20 March 2023]},
  label={LMFDB},
}

\bib{Moret-Bailly}{article}{
  author={Moret-Bailly, Laurent},
  title={A Henselian preparation theorem},
  eprint={arXiv:2112.13552}
  }

\bib{NU}{article}{
   author={Namikawa, Yukihiko},
   author={Ueno, Kenji},
   title={The complete classification of fibres in pencils of curves of
   genus two},
   journal={Manuscripta Math.},
   volume={9},
   date={1973},
   pages={143--186},
   issn={0025-2611},
   doi={10.1007/BF01297652},
}

\bib{Ogg}{article}{
   author={Ogg, A. P.},
   title={On pencils of curves of genus two},
   journal={Topology},
   volume={5},
   date={1966},
   pages={355--362},
   issn={0040-9383},
   doi={10.1016/0040-9383(66)90027-9},
}

\bib{Pop}{article}{
   author={Pop, Florian},
   title={Henselian implies large},
   journal={Ann. of Math. (2)},
   volume={172},
   date={2010},
   number={3},
   pages={2183--2195},
   issn={0003-486X},
   doi={10.4007/annals.2010.172.2183},
}

\bib{Sharif-Local}{article}{
   author={Sharif, Shahed},
   title={Curves with prescribed period and index over local fields},
   journal={J. Algebra},
   volume={314},
   date={2007},
   number={1},
   pages={157--167},
   issn={0021-8693},
   review={\MR{2331756}},
   doi={10.1016/j.jalgebra.2007.02.020},
}
	
\bib{stacks-project}{misc}{
  author       = {The {Stacks project authors}},
  title        = {The Stacks project},
  url = {\url{https://stacks.math.columbia.edu}},
  year         = {2023},
  label = {SP}
}

    \end{biblist}
\end{bibdiv}

\end{document}